\theoremstyle{definition}
\newtheorem*{theorem*}{Theorem}
\newtheorem*{conjecture*}{Conjecture}
\newtheorem{theorem}{Theorem}[section]
\theoremstyle{definition}
\newtheorem* {example*}{Example}
\newtheorem{lemma}{Lemma}[section]
\theoremstyle{definition}
\theoremstyle{definition}
\newtheorem* {notation}{Notation}
\newtheorem{proposition}{Proposition}[section]
\newtheorem{corollary}{Corollary}[section]
\newtheorem* {remark}{Remark}
\theoremstyle{definition}
\theoremstyle{definition}
\theoremstyle{definition}
\theoremstyle{definition}
\numberwithin{equation}{section}
\newcommand{\One}{{1\hspace{-.14cm} 1}}
\def\modu{\ (\mathrm{mod}\ }
\def\({\left(}
\def\){\right)}
\newcommand{\sgn}{\mathrm{sgn}}
                \newcommand{\cX}{\mathcal{X}}
    \newcommand{\sP}{\Lambda}      \newcommand{\CC}{\mathbb{C}}   \newcommand{\QQ}{\mathbb{Q}}    
   \newcommand{\cI}{\mathcal{I}}
    \def\ZZ{\mathbb{Z}} \def\Aut{\mathrm{Aut}}   \def\Ind{\mathrm{Ind}} \def\GL{\mathrm{GL}}   \def\Res{\mathrm{Res}}    \def\spanning{\textnormal{-span}}   
\def\Irr{\mathrm{Irr}}  \def\wt{\mathrm{wt}}
\def\diag{\mathrm{diag}}
\newcommand{\hs}{\hspace{1mm}}
\newcommand{\leftexp}[2]{{\vphantom{#2}}^{#1}{#2}}
\newcommand{\ba}{\begin{aligned}}
\newcommand{\ea}{\end{aligned}}
\newcommand{\barr}{\begin{array}}
\newcommand{\earr}{\end{array}}
\newcommand{\be}{\begin{equation}}
\newcommand{\ee}{\end{equation}}
\renewcommand{\@makefnmark}{\mbox{\textsuperscript{}}}
\begin{document}
\title{Generalized Involution Models for Wreath Products}
\author{Eric Marberg \\ Department of Mathematics \\ Massachusetts Institute of Technology \\ \tt{emarberg@math.mit.edu}}
\date{}

\maketitle

\begin{abstract}
We prove that if a finite group $H$ has a generalized involution model, as defined by Bump and Ginzburg, then the wreath product $H \wr S_n$ also has a generalized involution model.   This extends the work of  Baddeley concerning involution models for wreath products.  As an application, we construct a Gelfand model for wreath products of the form $A \wr S_n$ with $A$ abelian, and give an alternate proof of a recent result due to Adin, Postnikov, and Roichman describing a particularly elegant Gelfand model for the wreath product $\ZZ_r \wr S_n$.   We conclude by discussing some  notable properties of this representation and its decomposition into irreducible  constituents, proving a conjecture of Adin, Roichman, and Postnikov's.
\end{abstract}

\section{Introduction}

\def\sign{\mathrm{sign}}
\def\Des{\mathrm{Des}}
\def\Inv{\mathrm{Inv}}
\def\Pair{\mathrm{Pair}}
\def\Inn{\mathrm{Inn}}
\def\Ad{\mathrm{Ad}}

\def\cV{\mathcal{V}}

A \emph{Gelfand model} for a group is a representation equivalent to the multiplicity free sum of all the group's irreducible representations.  In the recent papers \cite{APR2007,APR2008}, Adin, Postnikov, and Roichman describe two beautiful Gelfand models for the symmetric group $S_n$ and the wreath product $\ZZ_r \wr S_n$.  These models are remarkable for their simple combinatorial descriptions, which 
go something as follows.  

The Gelfand model for $S_n$ in \cite{APR2007} coincides with the one for $\ZZ_r \wr S_n$ in \cite{APR2008} when $r=1$, so for the moment we discuss only this second model.  We view $\ZZ_r \wr S_n$ as the set of generalized $n\times n$ permutation matrices with nonzero entries given by $r$th roots of unity   
 and define
\[
\cV_{r,n } = \QQ\spanning \left\{ C_\omega : \omega \in \ZZ_r \wr S_n,\ \omega^T = \omega\right\}
\] to be a vector space spanned by 
the symmetric matrices in $\ZZ_r \wr S_n$.  Adin, Postnikov, and Roichman define a representation $\rho_{r,n}$ of $\ZZ_r \wr S_n$ in $\cV_{r,n}$ by the formula
\be\label{intro} 
\rho_{r,n}(g) C_\omega ={\sign_{r,n}(g, \omega)}_{} \cdot C_{g \omega g^T},\qquad\text{for }g,\omega \in \ZZ_r \wr S_n \text{ with }\omega^T = \omega\ee
where 
$\sign_{r,n}(g, \omega)$ is a coefficient taking values in $\{\pm 1\}$.   If $s_1,\dots,s_{n-1} \in \ZZ_r \wr S_n$ correspond to the simple reflections in $S_n$ and $s_0 \in \ZZ_r \wr S_n$ is the diagonal matrix $\diag\( \zeta_r, 1,\dots,1\)$ with $\zeta_r=e^{2\pi i /r}$, then 
\[ \sign_{r,n}(s_i,\omega) = \left\{ \barr{ll} -1, &\text{if }|\omega|(i) = i+1\text{ and }|\omega|(i+1)=i,
\\ 1,&\text{otherwise},\earr\right. \qquad\text{for }1\leq i < n\]
where $|\omega| \in S_n$ denotes the permutation corresponding to the matrix formed by replacing each entry of the matrix $\omega$ with its absolute value, 
 and 
\[ \sign_{r,n}(s_0,\omega) =\left\{\barr{ll} -1,&\text{if $\omega_{11} = \zeta_r^{-1}$ and $r$ is even}, \\ 1,&\text{otherwise}.\earr\right.\]
Theorem 1.2 in \cite{APR2008} asserts that the representation $\rho_{r,n}$ is in fact a Gelfand model for $\ZZ_r \wr S_n$.

Adin, Postnikov, and Roichman provide a largely combinatorial proof of this result.  Their strategy is first to find a formula for the character of any Gelfand model for $\ZZ_r \wr S_n$.  They then prove that the given map is a representation, compute its character, and check that this matches their first formula.  
This approach has the merit of hiding much of the messier representation theory in the background, behind some powerful combinatorial machinery.  
Such a combinatorial method of proof comes at a cost, however.  Besides requiring some detailed and occasionally technical calculations, it does not give us a very clear idea of what motivated the construction of these Gelfand models, or of what accounts for their particular elegance.  As a consequence, one does not know how various subrepresentations of $\rho_{r,n}$ explicitly decompose into irreducible constituents, and it is not evident how we might extend the Gelfand model for $\ZZ_r\wr S_n$, either to wreath products with other groups in place of $\ZZ_r$ or to the complex reflection subgroups $G(r,p,n) \subset \ZZ_r \wr S_n$.  This work arose as attempt to answer the former question of origin, and by extension to address these subsequent problems.

In the special case when $r=1$ and the wreath product $\ZZ_r \wr S_n$ coincides with $S_n$, 
our Gelfand model arises from an \emph{involution model} for $S_n$.  By this, we mean a set of linear characters $\{\lambda_i : C_{S_n}(\omega_i)\to \CC\}$ where $\omega_i$ are representatives of the distinct conjugacy classes of involutions in $S_n$, such that each irreducible character of $S_n$ appears as a constituent with multiplicity one of the sum of induced characters $\sum_i \Ind_{C_{S_n}(\omega_i)}^{S_n}(\lambda_i)$.  In the brief note \cite{IRS91}, Inglis, Richardson, and Saxl describe an involution model for $S_n$ which naturally corresponds to the representation $\rho_{r,n}$ with $r=1$.  Given this observation, a description of how the Gelfand model in \cite{APR2007} decomposes come for free.

Addressing the  case for general $r$ requires more effort on our part.  The work of Baddeley in \cite{B91-2} gives an important clue as to what our answers should look like.  That paper shows how to construct an involution model for the wreath product $H \wr S_n$ when an involution model exists for the finite group $H$. When $H = \ZZ_2$, Baddeley's construction gives rise to the Gelfand model $\rho_{2,n}$.  For all $r>2$, however, the representation $\rho_{r,n}$ does not correspond to an involution model.  In particular, for $r>2$ the symmetric matrices in $\ZZ_r \wr S_n$ are not all involutions and the group $H=\ZZ_r$ does not itself possess an involution model.  

Nevertheless, the Gelfand model $\rho_{r,n}$ does arise from a similar construction.  To describe this precisely, we make use of the definition by  Bump and Ginzburg  in \cite {BG2004} of a  \emph{generalized involution model}.  As one of our main results, we extend Baddeley's work in \cite{B91-2} to prove that if a finite group $H$ has a generalized involution model then so does $H \wr S_n$.  As an application of this result, we construct generalized involution models for $\ZZ_r \wr S_n$ and give a simple, alternate proof that $\rho_{r,n}$ is a Gelfand model. 

The rest of this paper is organized as follows.  Section \ref{prelim} defines a generalized involution model for a group and provides some useful preliminary results.  In Section \ref{S_n} we review the content of \cite{IRS91} and show how it implies the results in \cite{APR2007} concerning Gelfand models for the symmetric group.  In addition, we finish a calculation started in \cite{BG2004} to classify all generalized involution models of the alternating groups.  Section \ref{wreath} contains our main results.  In this section, we extend two theorems in \cite{B91-2} to provide a constructive proof of the following:

\begin{theorem*} If a finite group $H$ has a generalized involution model, then so does  $H\wr S_n$ for all $n\geq 1$.
\end{theorem*}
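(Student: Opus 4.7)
The plan is to adapt Baddeley's argument in \cite{B91-2} by carrying an additional twisting automorphism through each step. Fix on $H$ a generalized involution model with data $(\tau_H,\{\omega_j\},\{\lambda_j\})$, where $\tau_H$ is an involutive automorphism, the $\omega_j$ are orbit representatives for the twisted conjugation action $g\cdot h = gh\tau_H(g)^{-1}$ on the set of $\tau_H$-twisted involutions $\{h \in H : h\tau_H(h) = 1\}$, and each $\lambda_j$ is a linear character of the twisted centralizer $C_H^{\tau_H}(\omega_j)$. First I would extend $\tau_H$ to an automorphism $\tau$ of $H \wr S_n$ by letting it act componentwise on $H^n$ and trivially on $S_n$, and then describe the $\tau$-twisted involutions of $H\wr S_n$: an element $((h_1,\dots,h_n);\sigma)$ is twisted-involutive precisely when $\sigma^2 = 1$, each $h_i$ with $i$ a fixed point of $\sigma$ is a $\tau_H$-twisted involution of $H$, and $h_j = \tau_H(h_i)^{-1}$ whenever $(i,j)$ is a 2-cycle of $\sigma$.

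The second step is to identify the orbits of $H\wr S_n$ on these twisted involutions and to describe their twisted centralizers. An orbit representative should be encoded by a partial matching on $\{1,\dots,n\}$ whose unmatched points carry labels $j$ from the chosen $\omega_j$, and whose matched pairs carry labels from the set of $H$-conjugacy classes (arising from the freedom to conjugate $h_i$ within $H$ along each pair). The twisted centralizer of such a representative should then decompose as a direct product of factors of two kinds: blocks $C_H^{\tau_H}(\omega_j)\wr S_{a_j}$ coming from repetitions of the same fixed-point label, and hyperoctahedral-type blocks $H\wr (S_2 \wr S_{b_c})$ coming from repetitions of the same pair-label $c$. On each block I would attach a linear character, combining $\lambda_j$ with the sign of $S_{a_j}$ on unmatched blocks, and using the lifted Inglis--Richardson--Saxl character from \cite{IRS91} on matched blocks, precisely as Baddeley does but with the twist built in.

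Finally I would verify that the sum of the resulting induced characters equals $\sum_{\chi \in \Irr(H\wr S_n)}\chi$. Using Mackey's formula together with the standard Clifford-theoretic parameterization of $\Irr(H\wr S_n)$ in terms of partition-valued functions on $\Irr(H)$, the identity reduces to two inputs: the generalized involution identity for $H$, applied to each unmatched-label block, and the Inglis--Richardson--Saxl identity for $S_{b_c}$, applied to each matched-label block. The main obstacle is this last bookkeeping: one has to show that the induced character is multiplicity-free, which means tracking the twist $\tau$ carefully through the Mackey double-coset computation and verifying that the interaction between $\tau$ and the wreath structure does not spoil the multiplicity count. Once the twisted centralizers are pinned down correctly, the overall argument should parallel Baddeley's step by step, with $\tau_H$-twisted conjugation replacing ordinary conjugation and the characters $\lambda_j$ replacing trivial characters of involution centralizers.
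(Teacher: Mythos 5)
Your high-level skeleton does match the paper's: extend $\tau_H$ to $H\wr S_n$ componentwise on $H^n$ and trivially on $S_n$, describe the twisted involutions (your description of these is correct), pick orbit representatives, identify their twisted centralizers, attach linear characters block by block, and verify the model via Mackey's theorem and the Clifford-theoretic parameterization of $\Irr(H\wr S_n)$. The problem is that your second step --- the classification of orbits and of their twisted centralizers --- is wrong at exactly the point where the new content of the theorem lies.

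The matched pairs carry no labels. If $(i,j)$ is a $2$-cycle of $\sigma$ and you twist-conjugate by an element $(a_1,\dots,a_n;1)$ of $H^n$, the entry $h_i$ transforms as $h_i\mapsto a_j\, h_i\, \tau_H(a_i)^{-1}$ with $a_i$ and $a_j$ \emph{independent} parameters, so $h_i$ can be moved to $1$ (and then $h_j=\tau_H(h_i)^{-1}$ becomes $1$ as well). There is no conjugacy-class invariant attached to a pair: the orbits are indexed only by the number $k$ of $2$-cycles of $\sigma$ together with the multiset of labels $\omega_j$ on its fixed points, and your proposed representatives carrying distinct pair-labels would all lie in a single orbit, violating condition (b) in the definition of a generalized involution model. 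Correspondingly, the twisted centralizer of the matched block is not $H\wr(S_2\wr S_k)$ but the much smaller ``twisted diagonal'' subgroup
\[
V_k^\tau \;=\; \bigl\{(h_1,\tau_H(h_1),\dots,h_k,\tau_H(h_k)) : h_i\in H\bigr\}\cdot W_k ,
\]
of order $|H|^k 2^k k!$ inside $H\wr S_{2k}$, where $W_k$ is the centralizer in $S_{2k}$ of $(1\ 2)(3\ 4)\cdots(2k-1\ 2k)$. The heart of the paper's argument is precisely the decomposition of $\Ind_{V_k^\tau}^{H\wr S_{2k}}(\One)$ (Proposition \ref{main-prop}), which is computed constituent by constituent according to the value of the twisted Frobenius--Schur indicator $\epsilon_\tau(\psi)$ for $\psi\in\Irr(H)$, followed by a degree count; one then invokes Theorem \ref{bg-thm} --- the existence of a generalized involution model for $H$ forces $\epsilon_\tau\equiv 1$ on $\Irr(H)$ --- to see that twisting by the sign character produces exactly the all-even-column constituents needed for the final Littlewood--Richardson/Young's rule bookkeeping. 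Neither this subgroup, nor this induction computation, nor the indicator $\epsilon_\tau$ appears in your outline, and the multiplicity-free verification you defer to the end cannot be carried out from the subgroups you wrote down. Your treatment of the unmatched blocks ($C_{H,\tau_H}(\omega_j)\wr S_{a_j}$ with the character $\lambda_j$ extended through the wreath product) does agree with the paper.
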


In Section \ref{application} we apply this general result to give an alternate proof that $\rho_{r,n}$ is Gelfand model for $\ZZ_r \wr S_n$.  We also provide Gelfand models for wreath products of the form $A \wr S_n$, where $A$ is an arbitrary finite abelian group.   Using these constructions, we describe explicitly how the representation $\rho_{r,n}$ decomposes into irreducible constituents.  Specifically, given an involution $\omega \in \ZZ_r \wr S_n$, we say precisely which irreducible representations of $\ZZ_r \wr S_n$ appear as constituents of the subrepresentation generated by the vector $C_\omega \in \cV_{r,n}$. This allows us to prove the following theorem, which implies 
Conjecture 7.1 in \cite{APR2008}.

\begin{theorem*}  Let $\cX$ be a set of symmetric elements in $\ZZ_r \wr S_n$.  If the elements of $\cX$ span a $\rho_{r,n}$-invariant subspace of $\cV_{r,n}$, then the subrepresentation of $\rho_{r,n}$ on this space is equivalent to the multiplicity-free sum of all irreducible $\ZZ_r\wr S_n$-representations whose shapes are obtained from the elements of $\cX$  by the colored RSK correspondence.
\end{theorem*}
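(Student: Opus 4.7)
My plan is to reduce the theorem to a per-orbit statement about the twisted conjugation action $\omega \mapsto g\omega g^T$ on the set of symmetric elements of $\ZZ_r \wr S_n$. Because $\rho_{r,n}(g) C_\omega = \sign_{r,n}(g,\omega) C_{g\omega g^T}$ sends each basis vector of $\cV_{r,n}$ to a scalar multiple of another basis vector, a subspace spanned by a set $\cX$ of symmetric elements is $\rho_{r,n}$-invariant precisely when $\cX$ is a union of orbits under twisted conjugation. It therefore suffices to prove the theorem when $\cX$ is a single orbit $O$; the general case follows by taking a direct sum, with multiplicity-freeness automatic from $\rho_{r,n}$ being a Gelfand model.

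Fix such an orbit $O$ with a chosen representative $\omega_O$, and let $C(\omega_O)$ denote its centralizer in $\ZZ_r \wr S_n$. The restriction of $\rho_{r,n}$ to $\mathrm{span}(O)$ is equivalent to $\Ind_{C(\omega_O)}^{\ZZ_r \wr S_n}(\chi_O)$, where $\chi_O$ is the linear character of $C(\omega_O)$ defined by $\chi_O(g) = \sign_{r,n}(g,\omega_O)$; this is the standard description of a permutation-like module on an orbit as an induced representation from the stabilizer. Meanwhile, the constructive proof of the main theorem of Section~\ref{wreath}, specialized to $H = \ZZ_r$ in Section~\ref{application}, exhibits exactly the data $\{(\omega_O, \chi_O)\}$ (as $O$ ranges over orbits) as a generalized involution model for $\ZZ_r \wr S_n$, and in the process produces for each $O$ an explicit list $\Lambda_O$ of $r$-tuples of partitions of $n$ indexing the irreducible constituents of $\Ind_{C(\omega_O)}^{\ZZ_r \wr S_n}(\chi_O)$.

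The crux of the proof is then a combinatorial matching: one must establish that $\Lambda_O = \{\sh(\omega) : \omega \in O\}$, where $\sh(\omega)$ is the shape of the image of $\omega$ under the colored RSK correspondence. The $r=1$ prototype is due to Inglis--Richardson--Saxl \cite{IRS91}, reviewed in Section~\ref{S_n}: the conjugacy class of an involution $\omega \in S_n$ with $n-2k$ fixed points corresponds (via RSK) bijectively to the set of partitions of $n$ with exactly $n-2k$ odd-length columns, and these are precisely the shapes indexing the constituents of the corresponding IRS-induced representation. For $\ZZ_r \wr S_n$, the analogous assertion is that the combinatorial invariant of $\omega_O$ that determines $\Lambda_O$ in the generalized involution model (a colored refinement of the fixed-point count) coincides with the set of colored RSK shapes attained by elements of $O$.

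Once this matching is in place, for any $\rho_{r,n}$-invariant span of symmetric elements $\cX = \bigsqcup_i O_i$ we conclude
\[
\mathrm{span}(\cX) \;\cong\; \bigoplus_i \Ind_{C(\omega_{O_i})}^{\ZZ_r \wr S_n}(\chi_{O_i}) \;\cong\; \bigoplus_i \bigoplus_{\vec\lambda \in \Lambda_{O_i}} V_{\vec\lambda} \;=\; \bigoplus_{\vec\lambda \in \sh(\cX)} V_{\vec\lambda},
\]
the final expression being multiplicity-free since $\rho_{r,n}$ is a Gelfand model. The main obstacle is the combinatorial step: isolating and proving the colored-RSK analogue of the ``odd columns equal fixed points'' identity that links the orbit data to the irreducible data used in the generalized involution model.
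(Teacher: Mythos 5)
Your reduction to a single twisted-conjugacy orbit, and the identification of the orbit span with the character induced from $\sign_{r,n}(\cdot,\omega_O)$ on the twisted centralizer of a representative $\omega_O$, match the paper's strategy exactly; the constituents of that induced character are indeed read off from the generalized involution model (Corollary \ref{summary-cor}). The gap is that the step you yourself flag as ``the crux'' --- the set equality $\Lambda_O=\{\sh(\omega):\omega\in O\}$ --- is never actually established: you name it as the colored analogue of the Inglis--Richardson--Saxl/Sch\"utzenberger matching and then stop. As written, the argument is a correct reduction of the theorem to an unproved combinatorial identity rather than a proof.

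The paper closes this gap with two separate ingredients, neither of which appears in your proposal. First, one containment is elementary: for a symmetric $\omega$ the two-line arrays on which colored RSK operates are themselves involutions of their color classes (because $z_\omega$ is constant on the cycles of $|\omega|$), so Sch\"utzenberger's theorem applied color-by-color (Lemma \ref{schu}) shows that the number of odd columns in the shape of $P_j$ equals $|\{i\in\Fix(|\omega|):z_\omega(i)=\zeta_r^j\}|$. These counts are orbit invariants and are exactly the odd-column conditions of Corollary \ref{summary-cor}, so colored RSK maps $O$ injectively \emph{into} the set of $r$-partite standard Young tableaux whose shapes index constituents. Second, the reverse containment --- that every constituent shape is actually attained by some element of $O$ --- is not proved combinatorially at all but by counting: $|O|=\dim\(\mathrm{span}(O)\)=\sum_{\theta\in\Lambda_O}\chi_\theta(1)$, and $\chi_\theta(1)$ equals the number of $r$-partite standard Young tableaux of shape $\theta$, so an injection between finite sets of equal cardinality is a bijection. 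To complete your argument you would need to supply both halves; the dimension count is the cheap way to get surjectivity, and without it (or a direct combinatorial substitute) the claimed equality of shape sets does not follow.
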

%
 
 This information  provides the starting point of the complementary paper \cite{?}, in which we classify the generalized involution models of all finite complex reflection groups.

\section{Preliminaries}\label{prelim}

Below, we introduce the concept of a \emph{generalized involution model} for a finite group, as defined in \cite{BG2004}.  We also state some results due to Kawanaka and Matsuyama \cite{KM1990} and Bump and Ginzburg \cite{BG2004} which relate these models to a generalization of the classical Frobenius-Schur indicator function.

\def\cM{\mathcal{M}}

Throughout, all groups are assumed finite.  Recall that  a \emph{Gelfand model} of a group is a representation equivalent to the multiplicity free sum of all the group's irreducible representations.  
One can always form a Gelfand model by simply taking the direct sum of all irreducible representations,
but 
one usually desires to find some less obvious and more natural means of construction.  
One way of achieving this is through models.  The term ``model'' can mean several different things; 
for our purposes, a \emph{model} for a group $G$ is a set $\{ \lambda_i : H_i \to \CC\}$ of linear characters  of subgroups of $G$ such that
$ \sum_i \Ind_{H_i}^G(\lambda_i)$ is the multiplicity free sum of all irreducible characters of $G$.  By definition, a model gives rise to a Gelfand model which is a monomial representation.

The set of characters forming a model can still appear quite arbitrary, so one often investigates models satisfying some natural additional conditions.  This classic example of this sort of specialization is the \emph{involution model}.  
A model $\{ \lambda_i : H_i \to \CC\}$ is an {involution model} if there exists a set of representatives $\{ \omega_i\}$ of the distinct conjugacy classes of involutions in $G$, such that each subgroup $H_i$ is the centralizer of $\omega_i$ in $G$.  
%
%
%
This definition is made more flexible and hence more useful if we introduce an additional degree of freedom.
 Fix an automorphism $\tau \in \Aut(G)$ such that $\tau^2=1$.  
We denote the action of $\tau$ on $g \in G$ by $\leftexp{\tau}g$ or $\tau(g)$;
the group $G$ then acts on the set of \emph{generalized involutions} 
 \[\cI_{G,\tau} \overset{\mathrm{def}} = \{ \omega \in G : \omega\cdot \leftexp{\tau}{\omega} = 1\}\] by the $\tau$-\emph{twisted conjugation} $g : \omega \mapsto  g\cdot \omega\cdot \leftexp{\tau}{g}^{-1}.$  Let 
 \[ C_{G,\tau}(\omega) = \{ g \in G : g\cdot \omega \cdot \leftexp{\tau}g^{-1}=\omega\}\] denote the stabilizer of $\omega \in \cI_{G,\tau}$ in $G$ under this action.  We call $C_{G,\tau}(\omega)$ the $\tau$-\emph{twisted centralizer} of $\omega$ in $G$ and refer to the orbit of $\omega$ as its \emph{twisted conjugacy class}.

We now arrive at the definition of a \emph{generalized involution model} given by Bump and Ginzburg in \cite{BG2004}.  A generalized involution model for $G$ with respect to $\tau$ is a model $\cM$ for which there exists an injective map $\iota: \cM\to \cI_{G,\tau}$ such that the following hold:
\begin{enumerate}
\item[(a)] Each $\lambda \in \cM$ is a linear character of the $\tau$-twisted centralizer of $\iota(\lambda) \in \cI_{G,\tau}$ in $G$.
\item[(b)] The image of $\iota$ contains exactly one element from each $\tau$-twisted conjugacy class in $\cI_{G,\tau}$.
\end{enumerate}
This is just the definition of an involution model with the word ``centralizer'' replaced by ``twisted-centralizer.'' Indeed, an involution model is simply a generalized involution model with $\tau = 1$.  

\begin{remark}
 The original definition of a generalized involution model in \cite{BG2004} differs from this one in the following way:  in \cite{BG2004}, the set $\cI_{G,\tau}$ is defined as $\{ \omega \in G : \omega \cdot\leftexp{\tau}\omega = z\}$ where $z \in Z(G)$ is a fixed central element  with $z^2=1$.  One can show using Theorems 2 and 3 in \cite{BG2004} that under this definition, any generalized involution model with respect to $\tau,z$ is also a generalized involution model with respect to $\tau',z',$ where $\tau'$ is given by composing $\tau$ with an inner automorphism and $z'=1$.   Thus our definition is equivalent to the one in \cite{BG2004}, in the sense that the same models (that is, sets of linear characters) are classified as generalized involution models.  
 \end{remark}
  
 

We study involution models and generalized involution models, as opposed to other sorts of models, because the groups that can possibly possess them satisfy natural requirements too stringent to encourage indifference to existence questions.  In other words, one often ``expects'' certain reasonable families of groups to have generalized involution models, and this  renders classification questions interesting and tractable.

Let us illustrate this for involution models.  Clearly an involution model exists only if the sum of the degrees of all irreducible characters of $G$ is equal to the number of involutions in $G$.  The Frobenius-Schur involution counting theorem says more: namely, that this condition holds if and only if all the irreducible representations of $G$ are equivalent to real representations.  
Thus, if every irreducible representation of $G$ is realizable, then asking whether $G$ has an involution model is a natural question and one almost expects an affirmative answer.  In truth, the answer is often negative; Baddeley \cite{B91} proved in his Ph.D. thesis that the Weyl groups without involution models are those of type $D_{2n}$ ($n>1$), $E_6$, $E_7$, $E_8$, and $F_4$.  (Vinroot \cite{V} extends this result to show that of the remaining finite irreducible Coxeter groups, only the one of type $H_4$ does not have an involution model.)   However, we see from this classification that at the very least, we have an engaging question on our hands.

Our reason for asking whether a group $G$ has a generalized involution model derives from a generalization of the Frobenius-Schur involution counting theorem due to Bump and Ginzburg \cite{BG2004}.  To state this, 
let $\Irr(G)$ denote the set of irreducible characters of $G$, and for each $\psi \in \Irr(G)$ let $\leftexp{\tau} \psi$ denote the irreducible character $\leftexp{\tau}\psi = \psi \circ\tau$.  
We define the twisted indicator function $\epsilon_\tau : \Irr(G) \to \{-1,0,1\}$ by 
 \[ \epsilon_\tau(\psi) = \left\{
 \ba 
 1, &\quad \text{if $\psi$ is the character of a representation $\rho$ with $\rho(g) = \overline{\rho(\leftexp{\tau}g)}$ for all $g \in G$,} \\
 0, &\quad \text{if $\psi \neq \overline{\leftexp{\tau}\psi}$}, \\ 
 -1,&\quad\text{otherwise}.\ea\right.\]  When $\tau=1$, this gives the familiar Frobenius-Schur indicator function.  Kawanaka and Matsuyama  \cite[Theorem 1.3]{KM1990} prove that  $\epsilon_\tau$ has the formula
 \[ \epsilon_\tau(\psi) = \frac{1}{|G|} \sum_{g \in G} \psi(g \cdot \leftexp{\tau}{g}),\qquad\text{for }\psi \in\Irr(G).\] 
In addition, we have the following result, which appears in a slightly different form as Theorems 2 and 3 in \cite{BG2004}.
 
 \begin{theorem} \label{bg-thm} (Bump, Ginzburg \cite{BG2004}) Let $G$ be a finite group with an automorphism $\tau \in \Aut(G)$ 
 such that $\tau^2=1$.  
 Then the following are equivalent:
 \begin{enumerate}
 \item[(1)] The function $\chi : G\to \QQ$ defined by 
  \[ \chi(g) = | \{ u \in G :  u\cdot \leftexp{\tau}{u} = g\}|,\qquad\text{for }g\in G\] is the multiplicity-free sum of all irreducible characters of $G$. 
   \item[(2)] Every irreducible character $\psi$ of $G$ has $\epsilon_\tau(\psi) = 1$.
 \item[(3)]  The sum $\sum_{\psi \in \Irr(G)} \psi(1)$ is equal to $|\cI_{G,\tau}| = |\{ \omega \in G : \omega \cdot \leftexp{\tau}{\omega} = 1\}|$.
 \end{enumerate}      
\end{theorem}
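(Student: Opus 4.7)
The plan is to expand $\chi$ explicitly as an integer combination of irreducible characters, using the Kawanaka--Matsuyama formula as the main tool; once this expansion is in hand, the three equivalences follow essentially by inspection. A preliminary step is to verify that $\chi$ is a class function, which comes from observing that for each $h \in G$ the map $u \mapsto h u \leftexp{\tau}{h}^{-1}$ is a bijection of $G$ satisfying
\[ (h u \leftexp{\tau}{h}^{-1}) \cdot \leftexp{\tau}\!\bigl(h u \leftexp{\tau}{h}^{-1}\bigr) = h \cdot (u \cdot \leftexp{\tau}{u}) \cdot h^{-1}, \]
where the identity uses $\tau^2 = 1$ to cancel $\leftexp{\tau^2}{h}^{-1} = h^{-1}$. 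This bijection carries solutions of $u \cdot \leftexp{\tau}{u} = g$ to solutions of $u' \cdot \leftexp{\tau}{u'} = h g h^{-1}$, so $\chi(h g h^{-1}) = \chi(g)$.

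Next I would compute the multiplicity of any $\psi \in \Irr(G)$ in $\chi$. Swapping the order of summation gives
\[ \langle \chi, \psi \rangle = \frac{1}{|G|} \sum_{g \in G} \chi(g) \, \overline{\psi(g)} = \frac{1}{|G|} \sum_{u \in G} \overline{\psi}(u \cdot \leftexp{\tau}{u}), \]
and the Kawanaka--Matsuyama formula applied to the irreducible character $\overline{\psi}$ identifies the right-hand side with $\epsilon_\tau(\overline{\psi})$. Conjugating that same formula then shows $\epsilon_\tau(\overline{\psi}) = \overline{\epsilon_\tau(\psi)} = \epsilon_\tau(\psi)$, since the indicator takes values in $\{-1, 0, 1\}$. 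We therefore reach the explicit decomposition
\[ \chi = \sum_{\psi \in \Irr(G)} \epsilon_\tau(\psi) \, \psi. \]

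From here the three equivalences are essentially formal. The equivalence $(1) \Leftrightarrow (2)$ is immediate: $\chi$ is the multiplicity-free sum of all irreducible characters of $G$ if and only if every coefficient $\epsilon_\tau(\psi)$ equals $1$. For $(1) \Rightarrow (3)$, the definition of $\chi$ gives $\chi(1) = |\cI_{G,\tau}|$, while $\chi = \sum_\psi \psi$ gives $\chi(1) = \sum_\psi \psi(1)$. Conversely, $(3) \Rightarrow (2)$ follows by rewriting $\chi(1) = \sum_\psi \epsilon_\tau(\psi) \psi(1)$ together with $(3)$ as $\sum_\psi \bigl(1 - \epsilon_\tau(\psi)\bigr) \psi(1) = 0$; since each summand is non-negative and each $\psi(1) > 0$, every $\epsilon_\tau(\psi)$ must equal $1$. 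I do not foresee any genuine obstacle; the only subtlety worth flagging is the complex conjugation that appears when taking the inner product, which is handled cleanly by the identity $\epsilon_\tau(\overline{\psi}) = \epsilon_\tau(\psi)$.
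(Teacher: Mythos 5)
Your proof is correct. The paper itself does not prove this theorem --- it is quoted from Theorems 2 and 3 of Bump and Ginzburg --- but your argument is the standard one: verify that $\chi$ is a class function via the twisted conjugation bijection $u \mapsto h u \leftexp{\tau}{h}^{-1}$, use the Kawanaka--Matsuyama formula to obtain the decomposition $\chi = \sum_{\psi} \epsilon_\tau(\psi)\,\psi$, and then read off the three equivalences (with the positivity argument $\sum_\psi (1-\epsilon_\tau(\psi))\psi(1)=0$ closing the cycle $(3)\Rightarrow(2)$). The handling of the complex conjugation via $\epsilon_\tau(\overline{\psi}) = \overline{\epsilon_\tau(\psi)} = \epsilon_\tau(\psi)$ is exactly the right bookkeeping.
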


This theorem motivates Bump and Ginzburg's original definition of a generalized involution model. In explanation, if the conditions (1)-(3) hold, then the dimension of any Gelfand model for $G$ is equal to $\sum_i \(G : C_{G,\tau}(\omega_i)\)$ where $\omega_i$ ranges over a set of representatives of the distinct orbits in $\cI_{G,\tau}$.  The twisted centralizers of a set of orbit representatives in $\cI_{G,\tau}$ thus present an obvious choice for the subgroups $\{H_i\}$ from which to construct a model $\{\lambda_i : H_i \to \CC\}$, and one is naturally tempted to investigate whether $G$ has a generalized involution model with respect to the automorphism $\tau$.    

\def\KK{\mathbb{K}}

Before moving on, we state an observation concerning the relationship between a generalized involution model and a corresponding Gelfand model.  In particular, given $\tau \in \Aut(G)$ with $\tau^2=1$ and a fixed subfield $\KK$ of the complex numbers $\CC$, let
\be\label{cV}\cV_{G,\tau} = \KK\spanning\{ C_\omega : \omega \in \cI_{G,\tau}\}\ee be a vector space over $\KK$ spanned by the generalized involutions of $G$.  We often wish to translate a generalized involution model with respect to $\tau\in \Aut(G)$ into a Gelfand model defined in the space $\cV_{G,\tau}$.  
 The following lemma will be of some use later in this regard.




\begin{lemma}\label{observation} Let $G$ be a finite group with an automorphism $\tau \in \Aut(G)$ such that $\tau^2=1$.  Suppose 
there exists a function $\sign_G : G\times  \cI_{G,\tau} \to \KK$ such that the map
 $\rho : G\rightarrow \GL(\cV_{G,\tau})$ defined by
\be\label{defn-by} \rho(g) C_\omega = \sign_G(g,\omega)\cdot C_{g\cdot \omega\cdot \leftexp{\tau}g^{-1}},\qquad\text{for }g \in G, \ \omega \in \cI_{G,\tau}\ee  is a representation.  Then the following are equivalent:
\begin{enumerate}
\item[(1)] The representation $\rho$ is a Gelfand model for $G$.
\item[(2)] The functions 
\[ \left\{ \barr{rccl} \sign_{S_n}(\cdot, \omega): & C_{G,\tau}(\omega) &\to &\KK  \\ & g & \mapsto & \sign_G(g,\omega) \earr\right\},\] with $\omega$ ranging over any set of orbit representatives of $\cI_{G,\tau}$, form a generalized involution model for $G$.
\end{enumerate}
\end{lemma}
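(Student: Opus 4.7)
The plan is to decompose $\rho$ as a direct sum of induced representations, one for each orbit of $\tau$-twisted conjugation on $\cI_{G,\tau}$, and then observe that the equivalence (1) $\Leftrightarrow$ (2) becomes immediate from the definitions.

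First, since $\rho(g)$ sends $C_\eta$ to a scalar multiple of $C_{g \cdot \eta \cdot \leftexp{\tau}g^{-1}}$, the subspace $\cV_O := \KK\spanning\{C_\eta : \eta \in O\}$ is $\rho$-invariant for every orbit $O$, and $\cV_{G,\tau} = \bigoplus_O \cV_O$ as $G$-modules. Fix an orbit $O$ with representative $\omega$, and let $H = C_{G,\tau}(\omega)$. For $h \in H$ we have $\rho(h) C_\omega = \sign_G(h, \omega) C_\omega$; invertibility of $\rho(h)$ forces $\sign_G(h,\omega) \in \KK^\times$, and the multiplicativity relation $\rho(h_1 h_2) C_\omega = \rho(h_1)\rho(h_2) C_\omega$ forces $\lambda_\omega : h \mapsto \sign_G(h,\omega)$ to be a linear character $H \to \CC^\times$.

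The core step, and the only place where real checking is required, is to identify the subrepresentation of $\rho$ on $\cV_O$ with $\Ind_H^G(\lambda_\omega)$. I would do this by defining $\Phi : \Ind_H^G(\lambda_\omega) \to \cV_O$ by $\Phi(g \otimes 1) = \rho(g) C_\omega$. Well-definedness on the tensor product follows from the identity $\rho(gh) C_\omega = \lambda_\omega(h)\, \rho(g) C_\omega$ for $h\in H$, which is itself a direct consequence of $\rho$ being a representation together with the fact that $h$ fixes $\omega$ under $\tau$-twisted conjugation; $G$-equivariance is clear. A transversal $\{g_1,\ldots,g_{|O|}\}$ for $H$ in $G$ maps under $\Phi$ to the vectors $\rho(g_i) C_\omega$, each a nonzero $\KK^\times$-multiple of the basis vector $C_{g_i \cdot \omega \cdot \leftexp{\tau}{g_i}^{-1}}$ of $\cV_O$. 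As these basis vectors are distinct and $[G:H] = |O|$, the map $\Phi$ is a linear isomorphism.

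Combining the summands yields
\[ \rho \ \cong\ \bigoplus_{\omega} \Ind_{C_{G,\tau}(\omega)}^G(\lambda_\omega), \]
where $\omega$ runs over any set of orbit representatives in $\cI_{G,\tau}$. By definition, $\rho$ is a Gelfand model for $G$ precisely when the right-hand side is the multiplicity-free sum of all irreducible characters of $G$, and this is exactly the condition that $\{\lambda_\omega\}$ is a generalized involution model for $G$ with respect to $\tau$. This establishes the equivalence of (1) and (2).
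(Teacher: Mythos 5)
Your argument is correct and is precisely the ``elementary exercise'' that the paper leaves to the reader: you decompose $\cV_{G,\tau}$ into orbit subspaces, identify each with $\Ind_{C_{G,\tau}(\omega)}^{G}(\lambda_\omega)$ via the explicit intertwiner $g\otimes 1\mapsto \rho(g)C_\omega$, and read off the equivalence of (1) and (2) from the definitions. The paper supplies no written proof beyond a hint to use the induced character formula, and your module-theoretic version of that computation is complete and matches the intended argument.
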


\begin{remark} If $G$ has a generalized involution model $\{ \lambda_i : H_i \to \KK\}$ with respect to $\tau \in \Aut(G)$, then there automatically exists a function $\sign_G : G\times \cI_{G,\tau}\to\KK$ such that $\rho$ is a representation and (1) and (2) hold.  One can construct this function by considering the standard representation attached to the induced character $\sum_i \Ind_{H_i}^G(\lambda_i)$. 
 \end{remark}
 
\begin{proof}
This proof is an elementary exercise involving the definition of a representation and the formula for an induced character, which we leave to the reader.
%
\end{proof}

\begin{notation} In the following sections we employ the following notational conventions:  
\begin{enumerate}
\item[]$ \cI_{G} = \cI_{G,1} = \{ g \in G : g^2 = 1\}$;
\item[] $C_{G}(\omega) = C_{G,1}(\omega) = \{ g \in G : g\omega g^{-1}=\omega\}$; 
\item[] $\One=\One_G$ is the trivial character defined by $\One(g) = 1$ for $g \in G$;

\item[] $\otimes$ denotes the internal tensor product;
\item[] $\odot$ denotes the external tensor product.
\end{enumerate}
Thus, if $\rho,\rho'$ are representations of $G$, then $\rho \otimes \rho'$ is a representation of $G$ while $\rho \odot \rho'$ is a representation of $G\times G$, and similarly for characters.
\end{notation}


\section{Involution Models for Symmetric and Alternating Groups}\label{S_n}

In this section we review what is known of the generalized involution models for the symmetric and alternating groups from \cite{APR2007, BG2004, IRS91}.  Since the symmetric group typically has a trivial center and a trivial outer automorphism group, the group's generalized involution models are always involution models in the classical sense.  In preparation for the next section, we give quickly review the proof of Theorem 1.2 in \cite{APR2007} using the results of \cite{IRS91}. In addition, we extend some calculations in \cite{BG2004} to show that the alternating group $A_n$ has a generalized involution model if and only if $n\leq 7$.  

\subsection{An Involution Model for the Symmetric Group}

Klyachko \cite{K1, K2} and Inglis, Richardson, and Saxl \cite{IRS91} first constructed involution 
models for the symmetric group; additional models for $S_n$ and related Weyl groups appear in \cite{AA2001,A2003,AB2005,B91-2,R1991}.   More recently, Adin, Postnikov, and Roichman \cite{APR2007} describe a simple combinatorial action to define a Gelfand model for the symmetric group.  Their construction turns out to derive directly from the involution model in \cite{IRS91}, and goes as follows.  Let $S_n$ be the symmetric group on $n$ letters and define $\cI_{S_n} =\{ \omega \in S_n  : \omega^2 = 1\}$.  Let 
\[ \cV_{n} =\QQ\spanning\{ C_\omega : \omega \in \cI_{S_n}\} \] be a vector space with a basis indexed by $\cI_{S_n}$.  
For any permutation $\pi \in S_n$, define two sets
\[\ba  \Inv(\pi) &= \{ (i,j) : 1\leq i<j\leq n,\ \pi(i) > \pi(j)\}, \\
\Pair(\pi) &= \{ (i,j) : 1\leq i < j \leq n,\ \pi(i) = j,\ \pi(j) = i\}.\ea\]  The set $\Inv(\pi)$ is the inversion set of $\pi$, and its cardinality is equal to the minimum number of factors needed to write $\pi$ as a product of simple reflections.  In particular, the value of the alternating character at $\pi$ is $\sgn(\pi) =(-1)^{|\Inv(\pi)|}$.  The set $\Pair(\pi)$ corresponds to the set of 2-cycles in $\pi$.

%
Define a map $\rho_n : S_n \rightarrow \GL(\cV_{n})$ by 
\[ \rho_n(\pi) C_\omega =\sign_{S_n}(\pi,\omega)\cdot C_{\pi\omega \pi^{-1}},\qquad\text{for }\pi,\omega\in S_n,\ \omega^2=1,\] where 
\be\label{sign_S_n} \sign_{S_n}(\pi,\omega) = (-1)^{|\Inv(\pi) \cap \Pair(\omega)|}.\ee
Adin, Postnikov, and Roichman \cite{APR2007}  prove the following result.

\begin{theorem} \label{APR} (Adin, Postnikov, Roichman \cite{APR2007}) 
The map $\rho_n$ defines a Gelfand model for $S_n$. 
\end{theorem}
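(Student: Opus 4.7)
The plan is to apply Lemma 2.1 (the observation) with the trivial automorphism $\tau = 1$, so that $\cI_{G,\tau}$ specializes to $\cI_{S_n}$ and the twisted centralizers reduce to ordinary centralizers. By that lemma, to conclude that $\rho_n$ is a Gelfand model it suffices to establish two things: (i) the formula defining $\rho_n$ genuinely yields a representation of $S_n$ on $\cV_n$, and (ii) for each involution $\omega \in \cI_{S_n}$, the map $g \mapsto \sign_{S_n}(g,\omega)$ restricts to a linear character of $C_{S_n}(\omega)$, and these characters (as $\omega$ ranges over a set of conjugacy class representatives) form an involution model for $S_n$.

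For (i), I would verify the cocycle identity
\[
  \sign_{S_n}(\pi_1\pi_2,\omega) \;=\; \sign_{S_n}\!\(\pi_1,\, \pi_2\omega\pi_2^{-1}\)\cdot\sign_{S_n}(\pi_2,\omega).
\]
Using the formula (3.2), this reduces to a parity statement about inversions. Namely, the assignment $(i,j) \mapsto (\pi_2(i),\pi_2(j))$ (reordered if necessary) gives a bijection $\Pair(\omega) \to \Pair(\pi_2\omega\pi_2^{-1})$; under this bijection the standard decomposition of $\Inv(\pi_1\pi_2)$ in terms of $\Inv(\pi_2)$ and the preimage of $\Inv(\pi_1)$ under $\pi_2$ shows that the parity of $|\Inv(\pi_1\pi_2) \cap \Pair(\omega)|$ is the sum of the parities of $|\Inv(\pi_2) \cap \Pair(\omega)|$ and $|\Inv(\pi_1) \cap \Pair(\pi_2\omega\pi_2^{-1})|$.

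For (ii), take the standard representative $\omega_k = (1,2)(3,4)\cdots(2k-1,2k) \in \cI_{S_n}$ for each $0 \le k \le \lfloor n/2 \rfloor$; these exhaust the involution classes of $S_n$. The centralizer factorizes as $C_{S_n}(\omega_k) \cong (\ZZ_2 \wr S_k) \times S_{n-2k}$, and a short direct computation with (3.2) shows that $\sign_{S_n}(\cdot,\omega_k)$ restricts to a linear character $\lambda_k$ of this centralizer: trivial on the $S_{n-2k}$ factor, trivial on the pair-permutation subgroup of $\ZZ_2 \wr S_k$, and equal to $-1$ on each within-pair swap $(2i-1,2i)$. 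One then recognizes $\lambda_k$ as (a twist of) the linear character of $C_{S_n}(\omega_k)$ used by Inglis, Richardson, and Saxl in \cite{IRS91}, so their main theorem gives
\[
  \sum_{k=0}^{\lfloor n/2 \rfloor} \Ind_{C_{S_n}(\omega_k)}^{S_n} \lambda_k \;=\; \sum_{\psi \in \Irr(S_n)} \psi.
\]
Combined with Lemma 2.1, this immediately yields Theorem 3.1.

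The main obstacle is the bookkeeping needed in step (i), since multiplying $\pi_1$ and $\pi_2$ can both create and annihilate inversions, and pairs in $\Pair(\omega)$ must be tracked carefully under conjugation by $\pi_2$. The identification in step (ii) is comparatively routine once one confirms $\lambda_k$ is a character, since the values on the three specified types of generators determine it uniquely. Notably, no deep representation-theoretic argument is required once Lemma 2.1 and the Inglis--Richardson--Saxl theorem are in hand; the heavy lifting has already been done in \cite{IRS91}.
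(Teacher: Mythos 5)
Your proposal is correct and follows essentially the same route as the paper: verify the cocycle identity for $\sign_{S_n}$ by the parity/inversion bookkeeping to get a representation, identify $\sign_{S_n}(\cdot,\omega)$ as a linear character of $C_{S_n}(\omega)$ (the signature on the support of $\omega$, which is the $\sgn$-twist of the Inglis--Richardson--Saxl character), invoke their lemma to get an involution model, and conclude via Lemma \ref{observation}. The only cosmetic difference is that the paper makes the twist-by-$\sgn$ relationship to \cite{IRS91} explicit in a remark, exactly as you note in passing.
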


Kodiyalam and Verma first proved this theorem in the unpublished preprint \cite{KV2004}, but their methods are considerably more technical than the ones used in the later work \cite{APR2007}.
We provide a very brief proof of this, using the results of \cite{IRS91}, which follows the strategy outlined in the appendix of \cite{APR2007}.  This will serve as a pattern for later results.

That $\rho_n$ is a representation appears as Theorem 1.1 in \cite{APR2007}.  We provide a slightly simpler, alternate proof of this fact for completeness.

\begin{lemma} 
The map $\rho_n : S_n\rightarrow \GL(\cV_n)$ is a representation.
 \end{lemma}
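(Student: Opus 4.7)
The plan is to verify directly that $\rho_n$ preserves group multiplication. Since $\rho_n(\pi_1\pi_2)C_\omega$ and $\rho_n(\pi_1)\rho_n(\pi_2)C_\omega$ both produce scalar multiples of the same basis vector $C_{(\pi_1\pi_2)\omega(\pi_1\pi_2)^{-1}}$, the homomorphism property is equivalent to the cocycle identity
\[
\sign_{S_n}(\pi_1\pi_2,\omega) \;=\; \sign_{S_n}\!\bigl(\pi_1,\pi_2\omega\pi_2^{-1}\bigr)\cdot\sign_{S_n}(\pi_2,\omega),
\qquad \pi_1,\pi_2\in S_n,\ \omega\in \cI_{S_n}.
\]

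The key reformulation is to regard $\sign_{S_n}(\pi,\omega)$ as a product over the $2$-cycles of $\omega$,
\[
\sign_{S_n}(\pi,\omega) \;=\; \prod_{(i,j)\in\Pair(\omega)} \varepsilon_\pi(i,j),
\]
where for $i<j$ we set $\varepsilon_\pi(i,j)=-1$ if $\pi(i)>\pi(j)$ and $+1$ otherwise. Conjugation by $\pi_2$ gives a bijection $\Pair(\omega)\to \Pair(\pi_2\omega\pi_2^{-1})$ sending $(i,j)$ to $\bigl(\min(\pi_2(i),\pi_2(j)),\max(\pi_2(i),\pi_2(j))\bigr)$, so after reindexing the product on the right-hand side of the cocycle identity the claim reduces to the pair-by-pair statement
\[
\varepsilon_{\pi_1\pi_2}(i,j) \;=\; \varepsilon_{\pi_1}\!\bigl(\min(\pi_2(i),\pi_2(j)),\max(\pi_2(i),\pi_2(j))\bigr)\cdot \varepsilon_{\pi_2}(i,j),
\qquad i<j.
\]

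I would finish with a two-case split according to the sign of $\pi_2(j)-\pi_2(i)$: when $\pi_2(i)<\pi_2(j)$ both sides simply compare $\pi_1\pi_2(i)$ with $\pi_1\pi_2(j)$, and when $\pi_2(i)>\pi_2(j)$ the extra factor of $-1$ coming from $\varepsilon_{\pi_2}(i,j)$ compensates for the order swap in the argument of $\varepsilon_{\pi_1}$. The only real obstacle is notational bookkeeping; once $\sign_{S_n}$ is expressed as a product over pairs the verification becomes transparent, and in particular one avoids any appeal to Coxeter generators and braid relations or to reduced-word combinatorics.
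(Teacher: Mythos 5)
Your proof is correct and is essentially the paper's own argument: both reduce the homomorphism property to the cocycle identity $\sign_{S_n}(\pi_1\pi_2,\omega)=\sign_{S_n}(\pi_1,\pi_2\omega\pi_2^{-1})\cdot\sign_{S_n}(\pi_2,\omega)$ and verify it by splitting on whether $\pi_2$ inverts each pair in $\Pair(\omega)$. The paper phrases this additively mod $2$ as a cardinality identity among $\Inv(\pi_1\pi_2)$, $\Pair(\omega)$, and $\Inv(\pi_2)$, while you phrase it multiplicatively pair-by-pair, but the two-case analysis is the same.
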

 
\begin{proof}
It suffices to show that for $\omega \in \cI_{S_n}$ and $\pi_1,\pi_2 \in S_n$, 
\[|\Inv(\pi_1\pi_2)\cap \Pair(\omega)| \equiv |\Inv(\pi_1)  \cap \Pair(\pi_2\omega \pi_2^{-1})| +|\Inv(\pi_2)\cap\Pair(\omega)| \modu 2).\] Let $A^c$ denote the set $\{(i,j) : 1\leq i<j\leq n\} \setminus A$.  The preceding identity then follows by considering the Venn diagram of the sets $\Inv(\pi_1\pi_2)$, $\Pair(\omega)$, and $\Inv(\pi_2)$ and noting that 
\[ |\Inv(\pi_1) \cap \Pair(\pi_2\omega\pi_2^{-1})|  = |\Inv(\pi_1\pi_2) \cap \Pair(\omega) \cap \Inv(\pi_2)^c| + |\Inv(\pi_1\pi_2)^c \cap \Pair(\omega) \cap \Inv(\pi_2)|\] since if $i' = \pi_2(i)$ and $j' = \pi_2(j)$, then we have \[\ba \text{$i<j$ and $(i',j') \in \Inv(\pi_1) \cap \Pair(\pi_2\omega\pi_2^{-1})$}\quad\text{iff}\quad\text{$(i,j) \in \Inv(\pi_1\pi_2) \cap \Pair(\omega) \cap \Inv(\pi_2)^c$,} \\
 \text{$i>j$ and $(i',j') \in \Inv(\pi_1)\cap \Pair(\pi_2\omega\pi_2^{-1})$}\quad\text{iff}\quad\text{$(j,i) \in \Inv(\pi_1\pi_2)^c \cap \Pair(\omega) \cap \Inv(\pi_2)$.}
\ea\]  
\end{proof}

The preceding proof shows that as a map \[ \barr{rccl} \sign_{S_n}(\cdot, \omega): & C_{S_n}(\omega)& \to&  \CC \\
 & \pi & \mapsto &  (-1)^{|\Inv(\pi)\cap \Pair(\omega)|},\earr\] the symbol
  $\sign_{S_n}(\cdot,\omega)$ defines a linear character of the centralizer $C_{S_n}(\omega)$.  To name this character more explicitly, observe that elements of $C_{S_n}(\omega)$ permute the support of $\omega$ and also permute the set of fixed points of $\omega$.  In particular, if $\omega \in \cI_{S_n}$ has $f$ fixed points, then $C_{S_n}(\omega)$ is isomorphic to $(S_2\wr S_k) \times S_f$, where $k = (n-f)/2$ and where the wreath product $S_2\wr S_k$ is embedded in $S_n$ so that the subgroup $(S_2)^k$ is generated by the 2-cycles of $\omega$.  We now have a more intuitive definition of 
  $\sign_{S_n}(\pi,\omega)$.

\begin{corollary}\label{intuitive-def}
The value of $ \sign_{S_n}(\pi,\omega) $ for $\omega \in \cI_{S_n}$ and $\pi \in C_{S_n}(\omega)$ is the signature of $\pi$ as a permutation of the set $\{ i  : 1\leq i \leq n,\  \omega(i) \neq i\}$.
\end{corollary}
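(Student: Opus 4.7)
My approach is to exploit the fact that both sides of the asserted equality define linear characters of $C_{S_n}(\omega)$, and then verify that they agree on a convenient generating set. The preceding argument established that $\pi \mapsto \sign_{S_n}(\pi,\omega) = (-1)^{|\Inv(\pi) \cap \Pair(\omega)|}$ is a homomorphism $C_{S_n}(\omega) \to \{\pm 1\}$. On the other side, $\pi \mapsto \sgn(\pi|_{\supp(\omega)})$ is a character, since every $\pi \in C_{S_n}(\omega)$ preserves $\supp(\omega)$ setwise and so restriction followed by $\sgn$ is a composition of group homomorphisms.

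Using the isomorphism $C_{S_n}(\omega) \cong (S_2 \wr S_k) \times S_f$ noted just before the corollary, I would check equality on three families of generators: (i) generators of the $S_f$ factor, permuting the fixed points of $\omega$; (ii) the $k$ transpositions $t_i = (i\;\omega(i))$ corresponding to the $2$-cycles of $\omega$, which generate the base $(S_2)^k$; and (iii) ``order-preserving'' block swaps $b_{ij} = (i\;j)(\omega(i)\;\omega(j))$, which together with the $t_i$ generate $S_2 \wr S_k$. For (i), both characters equal $+1$: the permutation fixes every point of $\supp(\omega)$, and no pair in $\Pair(\omega)$ is inverted. For (ii), both characters equal $-1$: the restriction of $t_i$ to $\supp(\omega)$ is a single transposition, and among all inversions of $t_i$ only the pair $(i,\omega(i))$ (assuming $i < \omega(i)$) lies in $\Pair(\omega)$. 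For (iii), assuming $i < \omega(i)$ and $j < \omega(j)$, the direct check $b_{ij}(i) = j < \omega(j) = b_{ij}(\omega(i))$ and $b_{ij}(j) = i < \omega(i) = b_{ij}(\omega(j))$ shows that no pair of $\Pair(\omega)$ is inverted by $b_{ij}$, while the restriction is a product of two disjoint transpositions; both characters evaluate to $+1$.

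The only delicate point is to set up the generating family thoughtfully. The ``order-preserving'' choice of block swaps in (iii) is what makes the intersection with $\Pair(\omega)$ transparent by inspection; the ``twisted'' block swaps $(i\;\omega(j))(\omega(i)\;j)$ can then be recovered as products $b_{ij}\,t_i\,t_j$ and handled automatically by multiplicativity of characters, so they need not be checked separately. Once the generators are chosen this way, the verification reduces to the three routine cases above, and the conceptual content of the corollary is really just the reduction to checking characters on a carefully chosen generating set, supplied by the preceding lemma.
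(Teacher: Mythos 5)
Your proposal is correct and follows essentially the same route as the paper: both arguments use that $\sign_{S_n}(\cdot,\omega)$ and the restricted signature are homomorphisms on $C_{S_n}(\omega)$ and then check agreement on the same three families of generators (permutations of the fixed points, the transpositions $(i\ \omega(i))$, and the block swaps $(i\ j)(\omega(i)\ \omega(j))$). Your write-up just spells out the inspection step that the paper leaves implicit.
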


\def\cyc{\ }

\begin{proof}
If in cycle notation $\omega = (i_1\cyc j_1)\cdots(i_k\cyc j_k)$ where each $i_t < j_t$, then $C_{S_n}(\omega)$ is generated by permutations of the three forms $\alpha,\beta,\gamma$, where $\alpha = (i_t\cyc i_{t+1})(j_t\cyc j_{t+1})$, $\beta = (i_t\cyc j_t)$, and $\gamma$ fixes $i_1,j_1,\dots, i_k,j_k$.  By inspection, our original definition of $\sign_{S_n}(\pi,\omega) $ agrees with the given formula when $\pi$ is one of these generators, and so our formula holds for all $\pi \in C_{S_n}(\omega)$ since $\sign_{S_n}(\cdot,\omega) : C_{S_n}(\omega) \rightarrow \CC^\times$ is a homomorphism.
\end{proof}

That $\rho_n$ is a Gelfand model now comes as a direct result of  the following lemma, given as Lemma 2 in \cite{IRS91}.   
In this statement, we implicitly identify partitions with their Ferrers diagrams.

\begin{lemma}\label{IRS} (Inglis, Richardson, Saxl \cite{IRS91}) Let $\omega \in S_n$ be an involution fixing exactly $f$ points. 
Then the induced character 
\[ \Ind_{C_{S_n}(\omega)}^{S_n}\(\sign_{S_n}(\cdot,\omega)\)\] 
is the multiplicity free sum of the irreducible character of $S_n$ corresponding to partitions of $n$ with exactly $f$ odd columns.  
\end{lemma}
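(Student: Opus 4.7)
The plan is to compute $\Ind_{C_{S_n}(\omega)}^{S_n}(\sign_{S_n}(\cdot,\omega))$ via induction in stages and a classical plethystic identity, and then identify the resulting characters combinatorially.

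Write $k = (n-f)/2$, so that $C_{S_n}(\omega) \cong (S_2 \wr S_k) \times S_f$ with the base subgroup $(S_2)^k$ generated by the transpositions of $\omega$. By Corollary \ref{intuitive-def}, the character $\sign_{S_n}(\cdot,\omega)$ is the external tensor product $\xi \odot \One_{S_f}$, where $\xi$ is the linear character of $S_2 \wr S_k$ that is the sign character on each base $S_2$ and the trivial character on the top $S_k$. Indeed, each base generator $(i_t\ j_t)$ is a single transposition on the support of $\omega$, while each top generator $(i_t\ i_{t+1})(j_t\ j_{t+1})$ is a product of two disjoint transpositions on that support.

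Now induce in two stages, first from $(S_2 \wr S_k) \times S_f$ up to $S_{2k} \times S_f$, and then from $S_{2k} \times S_f$ up to $S_n$. The first stage is handled by the classical plethystic identity
\[ h_k[e_2] = \sum_{\nu} s_\nu, \]
where $\nu$ ranges over partitions of $2k$ all of whose columns have even length; via the Frobenius characteristic map this translates to $\Ind_{S_2 \wr S_k}^{S_{2k}}(\xi) = \sum_\nu \chi^\nu$. The second stage is then Pieri's rule, which yields
\[ \Ind_{C_{S_n}(\omega)}^{S_n}(\sign_{S_n}(\cdot,\omega)) = \sum_{\nu} \sum_{\lambda} \chi^\lambda, \]
the inner sum ranging over $\lambda \vdash n$ such that $\lambda/\nu$ is a horizontal strip of size $f$.

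It remains to exhibit a bijection between partitions $\lambda \vdash n$ with exactly $f$ odd columns and the pairs $(\nu, \lambda)$ produced above. Given such a $\lambda$, remove the bottom cell from each of its odd columns; the result is a partition $\nu \vdash 2k$ whose columns all have even length (adjacent-column inequalities are preserved, using parity whenever exactly one of two neighboring columns is odd), and the removed cells form a horizontal strip of size $f$ since no two lie in the same column. Conversely, a horizontal strip of $f$ cells added to an even-column $\nu$ changes each affected column by exactly one in length, so the resulting $\lambda$ has precisely $f$ odd columns. This bijection delivers both the multiplicity-freeness and the claimed indexing set. The principal obstacle is the plethystic identity $h_k[e_2] = \sum_\nu s_\nu$: it is classical (due to Littlewood) but invokes nontrivial symmetric-function machinery. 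An alternative that avoids plethysm would be to compute, by Frobenius reciprocity, the multiplicity of $\xi$ in $\chi^\nu|_{S_2 \wr S_k}$ for each irreducible $\chi^\nu$ of $S_{2k}$, showing it equals $1$ exactly when $\nu$ has all columns of even length.
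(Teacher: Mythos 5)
Your proof is correct, but note that the paper does not actually prove this lemma: it is quoted as Lemma 2 of \cite{IRS91} (modulo the harmless twist by the sign character explained in the remark that follows it), so there is no internal argument to compare against. What you give is the standard symmetric-function proof: identify $\sign_{S_n}(\cdot,\omega)$ with $\xi\odot\One_{S_f}$ on $(S_2\wr S_k)\times S_f$, induce in stages using Littlewood's identity $h_k[e_2]=\sum_\nu s_\nu$ (over $\nu\vdash 2k$ with all columns of even length) followed by Pieri's rule, and then observe that a partition $\lambda$ of $n$ contains a unique even-columned $\nu$ with $\lambda/\nu$ a horizontal strip, and that such a $\nu$ exists precisely when $\lambda$ has exactly $f$ odd columns. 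All the steps check out, including the parity argument showing that deleting the bottom cell of each odd column of $\lambda$ yields a genuine partition and that the deleted cells form a horizontal strip; this gives both the indexing set and multiplicity-freeness in one stroke. By contrast, the argument in \cite{IRS91} is a short induction on $n$ using the branching rule and a dimension count, and avoids plethysm entirely. Your route buys an explicit, constituent-by-constituent identification at the cost of invoking $h_k[e_2]=\sum_\nu s_\nu$, which, as you acknowledge, is a nontrivial classical fact essentially equivalent (under the characteristic map) to the $f=0$ case of the lemma itself; so your proof is best read as a clean reduction of the general case to that classical special case.
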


\begin{corollary}
The linear characters $\left\{\sign_{S_n}(\cdot,\omega) : C_{S_n}(\omega)\to \CC\right\}$,
with $\omega$ ranging over any set of representatives of the conjugacy classes in $\cI_{S_n}$, form an involution model for $S_n$.
\end{corollary}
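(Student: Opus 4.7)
The plan is to derive this corollary as an immediate consequence of Lemma \ref{IRS} together with a standard combinatorial observation about partitions. First, I would observe that by Corollary \ref{intuitive-def}, for each $\omega \in \cI_{S_n}$ the function $\sign_{S_n}(\cdot,\omega)$ is a linear character of $C_{S_n}(\omega)$, so the collection in question already satisfies the formal prerequisites of a model. What remains is to verify that $\sum_\omega \Ind_{C_{S_n}(\omega)}^{S_n}\!\bigl(\sign_{S_n}(\cdot,\omega)\bigr)$, with $\omega$ ranging over representatives of the conjugacy classes of involutions in $S_n$, is the multiplicity-free sum of all irreducible characters of $S_n$.

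Next, I would parameterize the conjugacy classes of involutions in $S_n$ by the number of fixed points $f$, where $f$ ranges over nonnegative integers with $f \leq n$ and $f \equiv n \pmod{2}$. Denoting a class representative with exactly $f$ fixed points by $\omega_f$, Lemma \ref{IRS} asserts that $\Ind_{C_{S_n}(\omega_f)}^{S_n}\!\bigl(\sign_{S_n}(\cdot,\omega_f)\bigr)$ is the multiplicity-free sum of those irreducible characters of $S_n$ indexed by partitions of $n$ with exactly $f$ odd columns.

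The combinatorial observation needed to finish is that each partition $\lambda$ of $n$ has a uniquely determined number of odd-length columns, and this number automatically has the same parity as $n$ (since $n$ is the total of the column lengths and the even-length columns contribute an even amount). Hence, as $f$ runs over its admissible values, the sets of partitions of $n$ with exactly $f$ odd columns form a disjoint partition of the set of all partitions of $n$. Summing the individual induced characters supplied by Lemma \ref{IRS} therefore produces each irreducible character of $S_n$ exactly once, which is precisely the definition of an involution model. The only step with genuine content is the parity observation about odd columns, and it presents no real obstacle.
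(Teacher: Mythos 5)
Your proof is correct and follows exactly the route the paper intends: the corollary is stated as an immediate consequence of Lemma \ref{IRS}, with the (unwritten) justification being precisely your observation that the sets of partitions of $n$ with exactly $f$ odd columns, as $f$ ranges over the fixed-point counts of the involution classes, are disjoint and exhaust all partitions of $n$. The parity argument you supply is the right one and the paper simply leaves it implicit.
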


Theorem \ref{APR} now follows immediately by Lemma \ref{observation}.

\def\Fix{\mathrm{Fix}}

\begin{remark}  
The result in \cite{IRS91} actually concerns the function 
$\sign_{S_n}(\cdot,\omega) \otimes \sgn,$  whose value at $\pi \in C_{S_n}(\omega)$ is the signature of $\pi$ as a permutation of the set $\Fix(\omega) \overset{\mathrm{def}} = \{i : \omega(i) = i\}$. 
Our version follows from the fact that tensoring with the alternating character commutes with induction.
In particular, \cite{IRS91} proves that if $\omega\in \cI_{S_n}$ is an involution with no fixed points, then the induction of the trivial character
\[ \Ind_{C_{S_n}(\omega)}^{S_n}(\One)\] is equal to the multiplicity free sum of the irreducible characters of $S_n$ corresponding to partitions with all even rows.  Proposition \ref{main-prop} gives a generalization of this result.
\end{remark}

\subsection{Generalized Involutions Models for the Alternating Group}

In this section with classify all generalized involutions models for the alternating groups $A_n$.  Bump and Ginzburg consider this example in detail in \cite{BG2004}, but stop just short of a complete classification.  We fill in this gap in their calculations with the following proposition.  Before stating it, observe that  for $n>6$, $A_n$ has a trivial center and a nontrivial outer automorphism, which is unique up to composition with inner automorphisms, given by any conjugation map $g \mapsto xgx^{-1}$ with $x \in S_n - A_n$.

\def\wt{\widetilde}

\begin{proposition}
The alternating group $A_n$ (for $n>2$) has a generalized involution model with respect to an inner automorphism if and only if $n \in \{ 5,6\}$ and with respect to an outer automorphism if and only if $n \in \{3,4,7\}$.
\end{proposition}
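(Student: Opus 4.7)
My plan is to use Theorem \ref{bg-thm} as the necessary counting criterion and to treat the resulting small cases directly. For $n \geq 4$ the center $Z(A_n)$ is trivial, so any $\tau \in \Aut(A_n)$ with $\tau^2 = 1$ has the form $\Ad(x)$ for an involution $x$ in $\Aut(A_n)$; when $n \neq 6$ this forces $x$ to be an involution in $S_n$, with $\tau$ inner iff $x \in A_n$ and outer otherwise. The case $n = 3$ is handled separately (since $A_3$ is abelian), as is $n = 6$ in order to accommodate the exceptional outer automorphisms. In all cases, the substitution $\omega \mapsto \omega x$ identifies $\cI_{A_n,\tau}$ with $\cI_{A_n}$ (inner case) or with the set of odd involutions in $S_n$ (outer case), and intertwines $\tau$-twisted conjugation with ordinary $A_n$-conjugation on the image.

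With this reduction, I tabulate the counting identity $\sum_{\psi \in \Irr(A_n)} \psi(1) = |\cI_{A_n,\tau}|$, using the formula $\sum_\psi \psi(1) = \tfrac{1}{2}\sum_{\lambda \neq \lambda'} f^\lambda + \sum_{\lambda = \lambda'} f^\lambda$ (from the restriction of $S_n$-irreducibles to $A_n$) together with elementary counts of involutions in $S_n$. Theorem \ref{bg-thm} immediately eliminates a GIM for many $n$. For the remaining positive cases $n \in \{3,4\}$ outer, $n \in \{5,6\}$ inner, and $n = 7$ outer, I construct a GIM explicitly: in each case one selects representatives $\omega_i$ of the twisted conjugacy classes in $\cI_{A_n,\tau}$, identifies the twisted centralizers $C_{A_n,\tau}(\omega_i)$ (which in the outer case are the even parts of ordinary $S_n$-centralizers of odd involutions), and exhibits linear characters $\lambda_i$ verifying $\sum_i \Ind_{C_{A_n,\tau}(\omega_i)}^{A_n}(\lambda_i) = \sum_\psi \psi$ by direct character-table calculation; in several of the cases the required characters can also be read off from a suitable modification of the Inglis--Richardson--Saxl model (Lemma \ref{IRS}) via restriction from $S_n$.

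The main obstacle---and the reason Bump and Ginzburg stopped short of the full classification---is that the counting identity $\sum \psi(1) = |\cI_{A_n,\tau}|$ happens to hold for certain additional values of $n$ outside the positive set, so that Theorem \ref{bg-thm} alone does not suffice to rule out a GIM for every other $n$. The paradigmatic example is $n = 8$ with outer $\tau$, where both sides of the counting identity equal $448$. For each such coincidental case I rule out the existence of a GIM by a direct finite check. For $n = 8$ outer the two $\tau$-twisted conjugacy classes correspond under $\omega \mapsto \omega x$ to the $28$ transpositions and $420$ triple-transpositions of $S_8$; the twisted centralizers, isomorphic to $S_6$ and to an index-two subgroup of $(S_2 \wr S_3) \times S_2$ (of orders $720$ and $48$, with abelianizations $\ZZ_2$ and $\ZZ_2 \times \ZZ_2$), admit only $2$ and $4$ linear characters respectively. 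A character-table computation of each of the $8$ candidate induced-character sums $\Ind(\lambda_1) + \Ind(\lambda_2)$ shows that none coincides with the multiplicity-free sum of the $14$ irreducibles of $A_8$; an analogous finite check disposes of the remaining coincidental cases and completes the classification.
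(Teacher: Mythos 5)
Your overall architecture (counting criterion from Theorem \ref{bg-thm}, explicit constructions for the positive cases, direct elimination of the ``coincidental'' cases) matches the paper's, and your analysis of $n=8$ is correct in its details: both sides of the counting identity do equal $448$, there are two twisted classes, and the twisted centralizers are $S_6$ and an index-two subgroup of $(S_2\wr S_3)\times S_2$ with $2$ and $4$ linear characters. But there is a genuine gap at the first step. ``Tabulating the counting identity'' can only ever dispose of finitely many $n$; to prove the only-if direction you must show that $\sum_{\psi\in\Irr(A_n)}\psi(1)\neq|\cI_{A_n,\tau}|$ for \emph{every} $n$ outside your finite list. That requires an actual asymptotic comparison of $\tfrac12\bigl(t(n)+\sum_{\lambda=\lambda'}f^\lambda\bigr)$ (where $t(n)$ is the number of involutions in $S_n$) against the number of even, respectively odd, involutions of $S_n$ --- this is precisely the content of Propositions 2 and 5 of \cite{BG2004}, which pin the surviving values down to $n\in\{5,6,10,14\}$ (inner) and $n\in\{3,4,7,8,12\}$ (outer). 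You neither prove this nor cite it, yet your entire third paragraph presupposes that the coincidental cases form a known finite set. Without that input the proof does not close.

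Two further points. First, for the residual cases the paper does not search over all candidate models: it observes that for $n\in\{8,10,12,14\}$ some twisted centralizer must be conjugate to $C_{S_n}(\omega)\cap A_n$ for $\omega$ an involution with exactly two fixed points, reduces the linear characters of that one subgroup to the four characters of $S_2\wr S_k$ tensored with $\One$ on the $S_2$ factor, and uses Lemma \ref{IRS} plus Pieri's rule to exhibit, for each, two constituents indexed by transpose partitions of $n$ --- which therefore agree on restriction to $A_n$. This is both uniform in $n$ and computationally light, whereas your proposed exhaustive check of all induced-character sums for $A_{12}$ and $A_{14}$ (groups of order up to about $4\times 10^{10}$, with several twisted classes each) is not realistically a ``direct character-table calculation.'' Second, for $n=6$ the statement also asserts non-existence with respect to \emph{outer} automorphisms, which for $A_6$ include the exceptional ones outside $S_6$; flagging that this case is ``handled separately'' is not yet an argument, and you should either reduce all such $\tau$ to a standard representative modulo inner automorphisms or check them explicitly.
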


\begin{proof}
Propositions 2 and 5 in \cite{BG2004} assert that $A_n$ can have a generalized involution model with respect to the identity automorphism only if $n \in \{5,6,10,14\}$, and with respect to the outer automorphism $g \mapsto (1\cyc2)g (1\cyc2)$ only if $n \in \{ 3,4,7,8,12\}$.  Bump and Ginzburg go on to discuss in \cite{BG2004} how to explicitly construct generalized involution models in the cases $n\in \{3,4,5,6,7\}$. To deal with the remaining cases, let $n \in \{8,10,12,14\}$ and suppose there exists a generalized involution model $\{\lambda_i : H_i\to \CC\}$ with respect to $\tau \in \Aut(A_n)$.  We argue by contradiction.  

By Lemma 5.1 in \cite{?}, we may assume that $\tau =1$ if $\tau$ is inner and that $\tau$ is conjugation by $(1\cyc 2)\in S_n-A_n$ if $\tau$ is not inner.  In the first (respectively, second) case, the subgroups $H_i$ are centralizers in $A_n$ of a set of representatives of the $A_n$-conjugacy classes of involutions in $A_n$ (respectively, $S_n-A_n$). 
Let $\omega \in S_n$ be an involution with two fixed points.  Since $\omega \in A_n$ if $n\equiv 2\modu4)$ and $\omega \in S_n-A_n$ if $n \equiv  0\modu 4)$, it follows that some $H_i$ is conjugate to the subgroup $C_{S_n}(\omega) \cap A_n$.  To prove the proposition, we will show that every character induced to $A_n$ from a linear character of $C_{S_n}(\omega) \cap A_n$ fails to be multiplicity free when $n\in \{8,10,12,14\}$.


To this end, write $n= 2k+2$.  We may assume $\omega = (1\cyc 2)(3\cyc 4)\cdots(2k-1\cyc 2k) \in S_{2k}\subset S_n$; note that $C_{S_n}(\omega) = C_{S_{2k}}(\omega) \times S_2$.  
Now, $C_{S_n}(\omega) \cap A_n$ is a subgroup of $C_{S_n}(\omega)$ of index two, and the larger group's action by conjugation on the degree one characters of the subgroup is trivial.  Therefore each linear character of $C_{S_n}(\omega) \cap A_n$ is obtained by restricting a linear character of $C_{S_n}(\omega)$.  The linear characters of $C_{S_n}(\omega)$ are of the form $\lambda \odot \One$ or $\lambda\odot \sgn$ where 
$\lambda$ is a linear character of $C_{S_{2k}}(\omega)$.  Since $\lambda \odot \sgn$ and $(\lambda\otimes \sgn) \odot\One$ have the same restriction to  $C_{S_n}(\omega) \cap A_n$, we may assume that an arbitrary linear character of $C_{S_n}(\omega) \cap A_n$ is obtained by restricting something of the form $\lambda \odot\One$.  By Mackey's theorem and the transitivity of induction, it follows that  any linear character of $C_{S_n}(\omega) \cap A_n$ induced to $A_n$ is equal to
\be\label{nmf} \ba \Ind_{C_{S_n}(\omega)\cap A_n}^{A_n}\(\Res^{C_{S_n}(\omega)}_{C_{S_n}(\omega)\cap A_n} \( \lambda \odot \One\)\)  &
= \Res_{A_n}^{S_n}\(\Ind_{C_{S_n}(\omega)}^{S_n} \( \lambda \odot\One\)\)\\&
=\Res_{A_n}^{S_n} \(\Ind_{S_{2k}\times S_2}^{S_n}\( \Ind_{C_{S_{2k}}(\omega)}^{S_{2k}} (\lambda) \odot \One\)\)  
\ea\ee for some linear character $\lambda $ of $C_{2k}(\omega)$.  We claim that this is never multiplicity free.

This follows by a calculation.  Note that $C_{S_{2k}}(\omega) \cong S_2 \wr S_k$, where the wreath subgroup $(S_2)^k\subset S_{2k}$ is generated by the 2-cycles of $\omega$.  It follows by Clifford theory that $C_{S_{2k}}(\omega)$ has four distinct linear characters $\lambda_i : C_{S_{2k}}(\omega) \to \CC$ defined by
\[ \ba \lambda_1(\pi) &= 1, \\ 
\lambda_2(\pi) &=\sgn(\pi), \\  
\lambda_3(\pi) &=\text{the signature of $\pi$ as a permutation of the set $\{ \{1,2\}, \{3,4\}, \dots, \{2k-1,2k\}\},$}
\\
\lambda_4(\pi) &= \sgn(\pi) \cdot \lambda_3(\pi), 
\ea\] 
for $\pi \in C_{S_{2k}}(\omega)$.  The following fact proves our claim: for each $i$, the induced character $\Ind_{C_{2k}(\omega)}^{S_{2k}} (\lambda_i)$ has two distinct constituents $\chi, \chi'$ such that $\Ind_{S_{2k}\times S_2}^{S_n}(\chi\odot\One)$ and $\Ind_{S_{2k}\times S_2}^{S_n}(\chi'\odot\One)$ have irreducible constituents indexed either by the same partition or transpose partitions of $n$.  Such characters have the same restriction to $A_n$, and so (\ref{nmf}) is not multiplicity free.

Table 1 below illustrates this situation. 
 The third column from Lemma \ref{IRS} when $i=1,2$ and by a computer calculation using {\tt{GAP}} when $i=3,4$.  We apply Pieri's rule to the third column to obtain a pair of conjugate partitions or a single partition with multiplicity two, which we list in the fourth column.  We recall that Pieri's rule states that if $\chi$ is indexed by a partition $\mu$ of $2k$, then $\Ind_{S_{2k}\times S_2}^{S_n}(\chi\odot\One)$ is the multiplicity free sum of the representations of $S_n$ indexed by all partitions of $n$ obtained by adding two boxes to $\mu$ in distinct columns. Observe that the partitions in the last column are transposes of each other when distinct, which proves our claim above.
\end{proof}
\[ \barr{|llll|} 
\hline \vspace{-3mm} &&&\\
\  n \quad& i\qquad &  \text{Partitions of $2k=n-2$ indexing}   \qquad & \text{Partitions of $n$ given by Pieri's rule\ } \\  & & \text{two constituents of $\Ind_{C_{2k}(\omega)}^{S_{2k}} (\lambda_i)$}\qquad & \text{on induction from $S_{n-2}\times S_2$ to $S_n$} \\ 
 \vspace{-3mm}&&&\\
\hline &&&\\    [-10pt]
\ 8 &  1 & (4,2)\text{ and }(2,2,2) & ( 4,4)\text{ and } (2,2,2,2)  \\
 \vspace{-3mm} &&&\\
  & 2 & (3,3)\text{ and }(2,2,1,1) & (4,3,1)\text{ and } (3,2,2,1)  \\ 
 \vspace{-3mm}  &&& \\
  & 3 & (3,3)\text{ and }(4,1,1) & (4,3,1)\text{ with multiplicity two}  \\ 
   \vspace{-3mm}&&& \\
 & 4 & (2,2,2)\text{ and }(3,1,1,1)  & (4,2,2) \text{ and }  (3,3,1,1)
 \\[-10pt]
&& & \\\hline 
& && \\[-10pt]
\ 10 &  1,2 & (4,4)\text{ and }(2,2,2,2)  &  (5,4,1)\text{ and }  (3,2,2,2,1)  
 \\
  \vspace{-3mm}&&&\\
  & 3 & (4,3,1)\text{ and }(5,1,1,1) &  (5,3,1,1)\text{ with multiplicity two}  \\ 
   \vspace{-3mm} &&&\\
 & 4 & (3,2,2,1)\text{ and }  (4,1,1,1,1) &  (5,2,2,1)\text{ and } (4,3,1,1,1) 
 \\[-10pt]
&& & \\\hline 
& && \\[-10pt]
\  12 &  1 & (6,4)\text{ and } (2,2,2,2,2) &  (6,6)\text{ and } (2,2,2,2,2,2)  \\
 \vspace{-3mm} &&&\\
  & 2 & (5,5)\text{ and }  (3,3,2,2) & (5,5,2)\text{ and } (3,3,2,2,2) \\ 
   \vspace{-3mm} &&&\\
  & 3 & (5,3,1,1)\text{ and } (6,1,1,1,1) & (6,3,1,1,1)\text{ with multiplicity two}  \\ 
   \vspace{-3mm} &&&\\
 & 4 & (4,2,2,1,1)\text{ and } (5,1,\dots,1)  & (6,2,2,1,1)\text{ and } (5,3,1,1,1,1) 
\\[-10pt]
&& & \\\hline 
& && \\[-10pt]
  \ 14 &  1,2 & (6,6) \text{ and } (2,2,2,2,2,2)& (7,6,1)\text{ and } (3,2,2,2,2,2,1) \\
 \vspace{-3mm} &&&\\
  & 3 & (6,3,1,1,1)\text{ and } (7,1,\dots,1) & (7,3,1,1,1,1)\text{ with multiplicity two}\ \  \\ 
   \vspace{-3mm} &&&\\
 & 4 & (5,2,2,1,1,1)\text{ and }(6,1,\dots,1) & (7,2,2,1,1,1)\text{ and } (6,3,1,1,1,1,1)  
 \\ [-10pt] &&&\\ \hline
\earr\]
\[\text{Table 1: Constituents of linear characters induced to $S_n$}\]
%
%

%


\section{Generalized Involution Models for Wreath Products}\label{wreath}

The main goal of this section is to generalize Theorems 1 and 2 and Proposition 3 in \cite{B91-2}.  Together, these extended results show how to construct a generalized involution model for the wreath product $H\wr S_n$ given a  generalized involution model for $H$.  From this construction we will derive a simple proof in the next section of Theorem 1.2 in \cite{APR2008}.

Throughout, we fix a finite group $H$ and a positive integer $n$ and let $G_n =   H \wr S_n$, so that $G_n$ is the semidirect product $G_n =  H^n \rtimes S_n$ where $S_n$ acts on $H^n$ by permuting the coordinates of elements.   We denote the action of $\pi \in S_n$ on $h = (h_1,\dots,h_n) \in H^n$ by 
\[\pi (h) \overset{\mathrm{def}}= \(h_{\pi^{-1}(1)},\dots,h_{\pi^{-1}(n)}\)\]  and write elements of $G_n$ as ordered pairs $(h,\pi)$ with $h \in H^n$ and $\pi \in S_n$.  The group's multiplication is then given by
\[ (h,\pi)(k,\sigma) = (\sigma^{-1}(h)\cdot k, \pi \sigma),\qquad\text{for }h,k \in H^n,\ \pi,\sigma \in S_n.\]  Throughout, we identify  $H^n$ and $S_n$ with the subgroups $\{ (h,1) : h \in H^n\}$ and $\{(1,\pi) : \pi \in S_n\}$ in $G_n$, respectively.



\def\wt{\widetilde}
\def\sP{\mathscr{P}}
\def\sT{\mathscr{T}}

\subsection{Irreducible Characters of Wreath Products}\label{wreath-chars}

To begin, we first review the construction of the irreducible characters of $G_n$.  Our notation mirrors but slightly differs from that in \cite{B91-2}.  Given groups $H_i$ and representations $\varrho _i : H_i \rightarrow \GL(V_i)$, for $i=1,\dots,m$,  let 
\[\barr{c} \bigodot_{i=1}^m \varrho _i: \prod_{i=1}^m H_i \to \GL\(\bigotimes_{i=1}^m V_i\)
\earr\] denote the representation defined for $h_i \in H_i$ and $v_i \in V_i$ by 
\[ \barr{c} \(\bigodot_{i=1}^m\varrho _i\)
(h_1,\dots,h_m) (v_1\otimes \cdots \otimes v_m) = \varrho _1(h_1)v_1 \otimes \cdots \otimes \varrho _m(h_m)v_m
.\earr\] If $\chi_i$ is the character of $\varrho _i$, then we let $\bigodot_{i=1}^m \chi_i$ denote the character of $\bigodot_{i=1}^m \varrho _i$.
%
%

Given a representation $\varrho  : H\rightarrow \GL(V)$, we extend  ${\bigodot_{i=1}^n \varrho }$ to a representation 
of $G_n$ by defining for $h\in H^n$, $\pi \in S_n$, and $v_i \in V$,
\[\barr{c} \(\wt{\bigodot_{i=1}^n \varrho }\)(h,\pi)(v_1\otimes \cdots \otimes v_n)=  \( \varrho\(h_{\pi^{-1}(1)}\)v_{\pi^{-1}(1)}\otimes \cdots \otimes \varrho\(h_{\pi^{-1}(n)}\)v_{\pi^{-1}(n)}\).\earr\]    
\begin{remark}
Check that this formula defines a representation, but note that it differs from the corresponding formula in \cite{B91-2}: there the right hand side is $\(\varrho(h_1) v_{\pi^{-1}(1)}\otimes \cdots \otimes \varrho(h_n) v_{\pi^{-1}(n)}\)$.  This is an artifact of our convention for naming elements of $G_n$, which differs from the one implicitly used in \cite{B91-2}, but which will later make some of our formulas nicer.  
\end{remark}

Let $\sP(n)$ denote the set of integer partitions of $n\geq 0$ and let $\sP = \bigcup_{n=0}^\infty \sP(n)$.  Given $\lambda \in \sP(n)$, let $\rho^\lambda$ denote the corresponding irreducible representation of $S_n$ and write $\chi^\lambda : S_n \to \QQ$ for its character.  We extend the representation $\rho^\lambda$ of $S_n$ to a representation $\widetilde{\rho^\lambda} $ of $G_n$ by setting 
\[ \wt {\rho^\lambda}(h,\pi) = \rho^\lambda(\pi),\qquad\text{for } h \in H^n,\ \pi \in S_n.\]
If $\varrho $ is a representation of $H$ and $\lambda \in \sP(n)$, then we define $\varrho  \wr \lambda$ as the representation of $G_n$ given by 
\[\barr{c} \varrho \wr \lambda \overset{\mathrm{def}} = \(\widetilde {\bigodot_{i=1}^n \varrho } \) \otimes \widetilde {\rho^\lambda}.\earr\]
If $\psi$ is the character of $\varrho $, then we define $\psi \wr \lambda$ as the character of $\varrho  \wr \lambda$.   
We now have the following preliminary lemma.

\begin{lemma}\label{char-vals}
Let $\psi$ be a character of $H$ and let $\lambda \in \sP(n)$.  If the cycles of $\pi \in S_n$ are $(i_1^t\cyc i_2^t\cyc \cdots\ i_{\ell(t)}^t)$ for $t=1,\dots,r$, then
\[(\psi \wr \lambda)( h,\pi) = \chi^\lambda(\pi)  \prod_{t=1}^r \psi\( h_{i_{\ell(t)}^t} \cdots h_{i_2^t}h_{i_1^t} \),
\qquad\text{for }h = (h_1,\dots,h_n) \in H^n.\]
\end{lemma}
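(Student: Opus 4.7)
The plan is to compute the character of $\varrho \wr \lambda = \wt{\bigodot_{i=1}^n \varrho} \otimes \wt{\rho^\lambda}$ at $(h,\pi)$ by exploiting the fact that the character of an internal tensor product of representations is the pointwise product of their characters. Since $\wt{\rho^\lambda}(h,\pi) = \rho^\lambda(\pi)$ by definition, its character contributes the factor $\chi^\lambda(\pi)$ immediately. All the work is therefore in computing the character of the first factor $\wt{\bigodot_{i=1}^n \varrho}$, which is where the cycle-product expression will come from.

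To evaluate this, I would fix a basis $\{e_\alpha\}$ of the underlying space $V$ of $\varrho$ and use the induced basis $e_{\alpha_1}\otimes\cdots\otimes e_{\alpha_n}$ of $V^{\otimes n}$. Writing out the defining formula
\[\wt{\bigodot_{i=1}^n \varrho}(h,\pi)(e_{\alpha_1}\otimes\cdots\otimes e_{\alpha_n}) = \varrho(h_{\pi^{-1}(1)})e_{\alpha_{\pi^{-1}(1)}}\otimes\cdots\otimes \varrho(h_{\pi^{-1}(n)})e_{\alpha_{\pi^{-1}(n)}},\]
the diagonal matrix coefficient at this basis vector equals $\prod_{j=1}^n \bigl(\varrho(h_j)\bigr)_{\alpha_{\pi(j)},\,\alpha_j}$ after the substitution $j = \pi^{-1}(i)$. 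Taking the trace amounts to summing this product over all tuples $(\alpha_1,\dots,\alpha_n)$.

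The key step is to observe that this sum factors over the cycles of $\pi$, because the only interaction among the indices $\alpha_j$ comes through the link $j \mapsto \pi(j)$. For a single cycle $(i_1^t, i_2^t, \ldots, i_{\ell(t)}^t)$, with $\pi(i_k^t)=i_{k+1}^t$ cyclically, summing over the indices on that cycle gives
\[\sum_{a_1,\dots,a_{\ell(t)}} \bigl(\varrho(h_{i_1^t})\bigr)_{a_2,a_1}\bigl(\varrho(h_{i_2^t})\bigr)_{a_3,a_2}\cdots\bigl(\varrho(h_{i_{\ell(t)}^t})\bigr)_{a_1,a_{\ell(t)}} = \tr\bigl(\varrho(h_{i_{\ell(t)}^t})\cdots\varrho(h_{i_2^t})\varrho(h_{i_1^t})\bigr),\]
which equals $\psi(h_{i_{\ell(t)}^t}\cdots h_{i_2^t}h_{i_1^t})$. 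Multiplying this contribution over all cycles $t=1,\dots,r$ and then by the $\chi^\lambda(\pi)$ factor from $\wt{\rho^\lambda}$ yields the claimed formula.

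The only real obstacle is bookkeeping: keeping the action convention $\pi(h) = (h_{\pi^{-1}(1)},\dots,h_{\pi^{-1}(n)})$ straight so that the factors $\varrho(h_{i_k^t})$ appear in the correct order inside the trace, and checking that the reverse order $h_{i_{\ell(t)}^t}\cdots h_{i_1^t}$ in the lemma matches what the matrix-product telescoping produces. Once the index substitution $j=\pi^{-1}(i)$ is done carefully, the cycle factorization is automatic and the lemma follows.
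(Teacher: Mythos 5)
Your proof is correct and follows essentially the same route as the paper's: both expand the trace of $\wt{\bigodot_{i=1}^n \varrho}(h,\pi)$ as a sum of diagonal matrix coefficients over a tensor-product basis, perform the reindexing $j=\pi^{-1}(i)$, and factor the resulting sum over the cycles of $\pi$, each cycle contributing $\psi$ of the reversed product of the corresponding $h_{i_k^t}$. The only cosmetic difference is that you peel off the $\chi^\lambda(\pi)$ factor explicitly via the tensor-product character identity, while the paper carries it along from the start.
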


\begin{proof}
Suppose $\psi$ is the character of a representation $\varrho$ in a vector space $V$ with a basis $\{v_j\}$.  
Observe that if $h_{i_1},h_{i_2},\dots,h_{i_\ell} \in H$, then
\[ \psi\(h_{i_\ell}\cdots h_{i_2} h_{i_1}\) = \sum_{j_1,j_2,\dots,j_\ell}  \( \varrho\(h_{i_1}\) v_{j_1}\big\vert_{v_{j_{2}}}\)\( \varrho\(h_{i_2}\) v_{j_2}\big\vert_{v_{j_{3}}}\)\cdots \( \varrho\(h_{i_\ell}\) v_{j_\ell}\big\vert_{v_{j_{1}}}\).\]
Therefore, it follows by definition that
\[\ba (\psi \wr \lambda)( h,\pi) &= \chi^\lambda(\pi) \sum_{j_1,\dots,j_n}
\(\varrho\(h_{\pi^{-1}(1)}\)v_{j_{\pi^{-1}(1)}}\)\otimes \cdots \otimes \(\varrho\(h_{\pi^{-1}(n)}\)v_{j_{\pi^{-1}(n)}}\)\big\vert_{(v_{j_1}\otimes \cdots \otimes v_{j_n})}
 \\
 &
= \chi^\lambda(\pi) \sum_{j_1,\dots,j_n}\prod_{i=1}^n \( \varrho\(h_{i}\) v_{j_i}\big\vert_{v_{j_{\pi(i)}}}\)
= \chi^\lambda(\pi)  \prod_{t=1}^r \psi\( h_{i_{\ell(t)}^t} \cdots h_{i_2^t}h_{i_1^t} \).
\ea\]
\end{proof}

Recall that  $\Irr(G)$ denotes the set of irreducible characters of a finite group $G$.
Let $\sP_H$ denote the set of all maps $\theta: \Irr(H) \rightarrow \sP$ and define
\[ \barr{c} \sP_H(n) = \left\{ \theta \in \sP_H: \sum_{\psi \in \Irr(H)} |\theta(\psi)| = n\right\}. \earr\] 
The following classification, which appears in \cite{B91-2} and as Theorem 4.1 in \cite{S1989}, derives from Clifford theory.  Stembrige \cite{S1989} attributes its original proof to Specht \cite{Spe}.

\begin{theorem}\label{wreath-reps} (Specht \cite{Spe}) The set of irreducible characters of $G_n$ is in bijection with $\sP_H(n)$.  In particular, each element of $\Irr(G_n)$ is equal to $\chi_\theta$ for a unique $\theta \in \sP_H(n)$, where \[  \chi_\theta \overset{\mathrm{def}} = \Ind_{S_\theta}^{G_n}\(\bigodot_{\psi \in \Irr(H)} \psi \wr \theta(\psi)\) \qquad\text{and}\qquad S_\theta \overset{\mathrm{def}} = \prod_{\psi \in \Irr(H)} G_{|\theta(\psi)|}.
\] In addition, the degree of the character $\chi_\theta$ is 
\[ \deg(\chi_\theta) = n! \prod_{\psi \in \Irr(H)} \frac{\deg(\psi)^{|\theta(\psi)|} \deg\(\chi^{\theta(\psi)}\)}{|\theta(\psi)|!}.\]
\end{theorem}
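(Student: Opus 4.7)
The statement is the classical Clifford-theoretic classification for wreath products, so I would proceed via Clifford theory applied to the normal subgroup $H^n \trianglelefteq G_n$ with quotient $S_n$. The plan has four steps: identify the $S_n$-orbits on $\Irr(H^n)$, compute stabilizers, extend an orbit representative to its inertia group, and induce.

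\textbf{Step 1 (orbits and stabilizers).} Every irreducible character of $H^n$ is of the form $\bigodot_{i=1}^n \psi_i$ with $\psi_i\in\Irr(H)$, and $S_n$ acts by permuting the indices. Two tuples lie in the same orbit iff they give the same multiset in $\Irr(H)$, so orbits are parametrized by multiplicity functions $m:\Irr(H)\to\ZZ_{\geq 0}$ with $\sum_{\psi} m(\psi)=n$. For a representative whose coordinates are grouped by common value, the stabilizer in $S_n$ is $\prod_{\psi}S_{m(\psi)}$, and hence the inertia group in $G_n$ is $I_m = H^n \rtimes \prod_{\psi} S_{m(\psi)} \cong \prod_{\psi} G_{m(\psi)} = S_\theta$ once we match $m(\psi)=|\theta(\psi)|$.

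\textbf{Step 2 (extending to the inertia group).} For each $\psi\in\Irr(H)$ afforded by a representation $\varrho$, the character $\psi^{\odot k}$ of $H^k$ has the canonical extension $\widetilde{\bigodot_{i=1}^k \varrho}$ to $G_k$; tensoring by the extended Specht character $\widetilde{\rho^\mu}$ gives $\psi \wr \mu$ for any $\mu\vdash k$. These extensions are pairwise inequivalent as $\mu$ varies (their restrictions to the subgroup $S_k\subset G_k$ are the irreducible Specht modules $\rho^\mu$), so applying the standard Clifford correspondence to each factor, the irreducible characters of $I_m$ extending $\bigodot_{i=1}^n \psi_i$ are precisely
\[
\bigodot_{\psi \in \Irr(H)} \psi \wr \mu_\psi, \qquad \mu_\psi \vdash m(\psi).
\]
Choosing such data for each $\psi$ is equivalent to specifying $\theta\in\sP_H(n)$ with $|\theta(\psi)|=m(\psi)$, and the character just displayed is $\bigodot_{\psi}\psi \wr \theta(\psi)$.

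\textbf{Step 3 (Clifford induction and degree).} By Clifford's theorem, inducing any such extension from $I_m = S_\theta$ to $G_n$ yields an irreducible character of $G_n$, distinct $\theta$'s yield distinct characters (the $\theta$ can be recovered by restricting to $H^n$ to read off $m$, and then decomposing the $I_m$-action on the isotypic component to read off each $\mu_\psi$), and every element of $\Irr(G_n)$ is obtained this way since $\sum_\theta \deg(\chi_\theta)^2 = |G_n|$ will follow from the dimension count. For the degree formula, $\psi \wr \mu$ has dimension $\deg(\psi)^{|\mu|}\deg(\chi^\mu)$ directly from the definition, and $[G_n:S_\theta]=n!/\prod_\psi |\theta(\psi)|!$; multiplying gives the stated expression.

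\textbf{Main obstacle.} The only non-formal point is Step 2: one must verify both that $\psi\wr\mu$ genuinely extends $\psi^{\odot |\mu|}$ (an easy check using the definition of $\widetilde{\bigodot \varrho}$) and that as $\mu$ ranges over partitions of $k$, these extensions exhaust the irreducible extensions of $\psi^{\odot k}$ to $G_k$. The latter follows because the action of $S_k$ on the $\psi^{\odot k}$-isotypic component of $\mathrm{Ind}_{H^k}^{G_k}(\psi^{\odot k})$ is the regular representation of $S_k$, so the Clifford correspondence applies cleanly; once this is in hand, the rest of the argument is bookkeeping. The sum-of-squares sanity check $\sum_\theta \deg(\chi_\theta)^2 = |H|^n \cdot n!$ follows from $\sum_{\mu\vdash k}\deg(\chi^\mu)^2 = k!$ together with $\sum_{\psi}\deg(\psi)^2 = |H|$ via the multinomial expansion, confirming that no irreducible characters are missed.
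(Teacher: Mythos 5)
The paper offers no proof of this theorem at all --- it simply cites Specht and Stembridge and remarks that the classification ``derives from Clifford theory'' --- so your proposal is supplying exactly the argument the paper leaves implicit, and its overall structure (orbits of $S_n$ on $\Irr(H^n)$, inertia groups $S_\theta$, extension via $\widetilde{\bigodot\varrho}\otimes\widetilde{\rho^\mu}$, Clifford induction, and the degree/sum-of-squares bookkeeping) is correct and standard. The one flawed point is your parenthetical justification in Step 2 that the characters $\psi\wr\mu$ are pairwise inequivalent because ``their restrictions to $S_k\subset G_k$ are the irreducible Specht modules $\rho^\mu$'': the restriction of $\psi\wr\mu$ to $S_k$ is $\rho^\mu$ tensored with the restriction of $\widetilde{\bigodot\varrho}$, whose character at $\pi$ is $\deg(\psi)^{c(\pi)}$ with $c(\pi)$ the number of cycles of $\pi$, so this restriction is neither $\rho^\mu$ nor irreducible unless $\psi$ is linear. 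The conclusion you need is still true, but the correct justification is Gallagher's theorem: since $\psi^{\odot k}$ extends to its inertia group $G_k$, the map $\beta\mapsto\widetilde{\bigodot\varrho}\otimes\widetilde\beta$ is a bijection from $\Irr(G_k/H^k)=\Irr(S_k)$ onto the irreducible characters of $G_k$ lying over $\psi^{\odot k}$; with that substitution the rest of your argument goes through.
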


All products here proceed in the order of a some fixed enumeration of $\Irr(H)$.  The character $\chi_\theta$ is independent of this enumeration because reordering the factors in $S_\theta$ yields a conjugate subgroup.

\newcommand\sU{\mathscr{U}}

\subsection{Inducing the Trivial Character}

Fix an automorphism $\tau \in \Aut(H)$ with $\tau^2 = 1$.  
In this section, we describe the irreducible constituents of the induced character 
\[ \Ind_{V_k^\tau}^{G_{2k}}(\One),\] where $\One \in \Irr(G_{2k})$ denotes the trivial character of $G_{2k}$, and $V_k^\tau$ denotes a subgroup which will be one of the twisted centralizers in our generalized involution model.

Fix a nonnegative integer $k$, and define $W_k \subset S_{2k}$ as the subgroup
\be\label{W_k} W_k =  \xi(S_2 \wr S_k),\ee where $\xi : S_2\wr S_k \rightarrow S_{2k}$ embeds $S_2\wr S_k$ in $S_{2k}$ such that the wreath subgroup $(S^2)^k\subset S_2 \wr S_k$ is mapped to the subgroup of $S_{2k}$ generated by the simple transpositions $(2i-1\cyc 2i)$ for $i=1,\dots,k$.  In other words, let $W_k$ be the centralizer in $S_{2k}$ of the involution 
\be \label{omega_k}\omega_k \overset{\mathrm{def}}=(1\cyc 2)(3\cyc 4)\cdots (2k-1\cyc 2k) \in S_{2k},\ee where by convention $\omega_0 = 1$.  Next, define $\delta^{\tau}_k(H)$ as the following subgroup of $H^{2k}$:
\[ \delta^{\tau}_k(H) =\left \{ (h_1,\leftexp{\tau}h_1, h_2, \leftexp{\tau} h_2,\dots,h_k,\leftexp{\tau}h_k) : h_i \in H \right\}.\]  Observe that the action of $W_k$ preserves $\delta^\tau_k(H)$, and let $V_k^\tau$ denote the subgroup of $G_{2k}$ given by 
\[ V_k^\tau =  \delta^{\tau}_k(H)\cdot  W_k= \left\{ (h,\pi) \in G_{2k} : ,h \in \delta^\tau_k(H),\ \pi \in W_k\right\}.\]  
This subgroup will be one of the key building blocks used to construct the twisted centralizers whose linear characters will comprise a generalized involution model for $G_n$.  In fact, the critical step in constructing a model for $G_n$ from a model for $H$ will be to determine the irreducible constituents of the character of $G_n$ induced from the trivial character of the subgroup $V_k^\tau$.  The following two lemmas take care of some the calculations needed to compute this.

\begin{lemma}\label{lem1}
Let $\psi$ be an irreducible character of $H$ with $\epsilon_\tau(\psi) = \pm 1$ and let $\lambda \in \sP({2k})$.  Then
\[ \left\langle \One, \Res_{V_k^\tau}^{G_{2k}}\(\psi \wr \lambda \)\right\rangle_{V_k^\tau} = \left\{\barr{ll} 1, &\text{if }\left\{\ba  &\epsilon_\tau(\psi) = 1\text{ and $\lambda$ has all even rows}, \\ &\epsilon_\tau(\psi)=-1\text{ and $\lambda$ has all even columns}, \ea\right. \\ \\ 0,&\text{otherwise}.\earr\right.\]
\end{lemma}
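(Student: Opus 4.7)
The plan is to compute the inner product directly from character values and recognize the resulting expression as an induced-character inner product in $S_{2k}$. Unpacking the definitions,
\[
\langle \One, \Res_{V_k^\tau}^{G_{2k}}(\psi \wr \lambda)\rangle_{V_k^\tau} = \frac{1}{|V_k^\tau|} \sum_{\pi \in W_k} \chi^\lambda(\pi) \sum_{h \in \delta_k^\tau(H)} \prod_{t=1}^{r(\pi)} \psi\bigl( h_{i_{\ell(t)}^t} \cdots h_{i_1^t}\bigr)
\]
by Lemma~\ref{char-vals}, using the cycle notation of that statement. For each $\pi \in W_k$, write $\pi = \xi(\epsilon,\sigma)$ with $\epsilon \in (S_2)^k$ and $\sigma \in S_k$. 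Each cycle $c$ of $\sigma$ of length $m$ produces either one $2m$-cycle in $\pi$ (if the parity $\eta_c \equiv \sum_{j\in c} \epsilon_j \pmod 2$ is odd) or two $m$-cycles in $\pi$ (if $\eta_c$ is even), according as the pattern of within-pair swaps returns us to the original or the opposite element of the starting pair after one tour of $c$.

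The main step is to evaluate the inner sum over $h \in \delta_k^\tau(H)$. I will show it factors as a product over cycles of $\sigma$, where each cycle of length $m$ contributes $|H|^m$ when $\eta_c = 0$ and $|H|^m \epsilon_\tau(\psi)$ when $\eta_c = 1$. Indeed, an element of $\delta_k^\tau(H)$ has coordinates $h_{2j-1} = h'_j$ and $h_{2j} = \leftexp{\tau}{h'_j}$, so each visit to pair $j$ along a cycle of $\pi$ contributes a factor of either $h'_j$ or $\leftexp{\tau}{h'_j}$. In the $\eta_c = 0$ case, the two $m$-cycles yield a joint product of the form $\psi(u)\,\psi(\leftexp{\tau}{u})$ for a word $u$ in the free variables; summing over $H^m$ gives $|H|^m$, since the orthogonality relations together with the hypothesis $\epsilon_\tau(\psi) \ne 0$ (which forces $\psi = \overline{\leftexp{\tau}\psi}$) yield $\sum_{g \in H} \psi(g)\psi(\leftexp{\tau}{g}) = |H|$. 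In the $\eta_c = 1$ case, the single $2m$-cycle yields $\psi(\leftexp{\tau}{u}\cdot u) = \psi(u\cdot \leftexp{\tau}{u})$ after cyclic rotation of the argument of $\psi$, and summing over $H^m$ gives $|H|^m\,\epsilon_\tau(\psi)$ by the Kawanaka--Matsuyama formula $\sum_g \psi(g\cdot \leftexp{\tau}g) = |H|\,\epsilon_\tau(\psi)$.

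Combining contributions from every cycle and canceling $|\delta_k^\tau(H)| = |H|^k$ against the corresponding factor in $|V_k^\tau| = |H|^k\,|W_k|$ reduces the inner product to
\[
\frac{1}{|W_k|}\sum_{\pi \in W_k} \chi^\lambda(\pi)\, \zeta(\pi),
\]
where $\zeta = \One$ if $\epsilon_\tau(\psi) = 1$ and $\zeta(\xi(\epsilon,\sigma)) = (-1)^{\sum_j \epsilon_j}$ if $\epsilon_\tau(\psi) = -1$. In the second case, one checks directly from the embedding $\xi$ that $\zeta$ is the restriction of $\sgn$ on $S_{2k}$ to $W_k$; Frobenius reciprocity then identifies the sum with $\langle \chi^\lambda, \Ind_{W_k}^{S_{2k}}(\zeta)\rangle_{S_{2k}}$. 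The proof concludes by invoking the decomposition quoted in the remark after Corollary~\ref{intuitive-def}: $\Ind_{W_k}^{S_{2k}}(\One)$ is the multiplicity-free sum of $\chi^\mu$ over $\mu \vdash 2k$ with all even rows, and tensoring this with $\sgn$ conjugates the indexing partitions, so that $\Ind_{W_k}^{S_{2k}}(\sgn|_{W_k})$ is the analogous sum over partitions with all even columns.

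The main obstacle is the cycle-by-cycle calculation in the second paragraph, specifically verifying that the $2m$-cycle appearing in the $\eta_c = 1$ case produces a word in $H$ which, after a cyclic rotation, has the form $g\cdot \leftexp{\tau}{g}$ with $g$ ranging uniformly over $H$ as the free variables along the cycle vary; the parity bookkeeping showing that each visit contributes $h'_j$ versus $\leftexp{\tau}{h'_j}$ according to the running cumulative parity $\epsilon_{j_1}+\cdots+\epsilon_{j_{t-1}}$ must be set out carefully. Once this combinatorial claim is pinned down, the remainder of the argument reduces to orthogonality, Kawanaka--Matsuyama, and the Lemma~\ref{IRS}-type decomposition of induced characters from $W_k$.
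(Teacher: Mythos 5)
Your proposal is correct and follows essentially the same route as the paper's proof: evaluate $\sum_{h\in\delta_k^\tau(H)}(\psi\wr\lambda)(h,\pi)$ cycle-by-cycle using orthogonality of characters and the Kawanaka--Matsuyama formula, reduce the inner product to $\langle\One,\Res^{S_{2k}}_{W_k}\chi^\lambda\rangle_{W_k}$ or $\langle\sgn,\Res^{S_{2k}}_{W_k}\chi^\lambda\rangle_{W_k}$, and finish with Frobenius reciprocity and Lemma \ref{IRS}. In fact your cycle analysis in the $(\epsilon,\sigma)$-coordinates of $S_2\wr S_k$ is more complete than the paper's, which lists only the $2$-cycles of $\omega_k$ and the pairs of cycles interchanged by $\omega_k$-conjugation and omits the self-paired cycles of length $2m$ with $m\geq 2$ (e.g.\ the $4$-cycle $(1\ 3\ 2\ 4)\in W_2$) --- an omission that happens not to affect the final formula, since such a cycle contributes a factor $\epsilon_\tau(\psi)$ to the sum and a factor $-1$ to $\sgn(\pi)$, exactly as a $2$-cycle of $\omega_k$ does.
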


\begin{proof}
Fix $\pi \in W_k$.  The cycles of $\pi$ are either of the form $(2i_s-1, \ 2i_s)$ for $s=1,\dots,S$, or come in pairs of the form $(i_1^t\cyc i_2^t\cyc \cdots\ i_{\ell(t)}^t)$, $(j_1^t\cyc j_2^t\cyc \cdots\ j_{\ell(t)}^t)$ for $t=1,\dots, T$, where $(i_a^t\cyc j_a^t)$ is a cycle of $\omega_k$ for each $a,t$.  If $h \in \delta^\tau_k(H)$, then in the former case $h_{k+i_s} = \leftexp{\tau}{ h_{i_s}}$ and in the latter case $h_{j_a^t} = \leftexp{\tau}{h_{i_a^t}}$.  In addition, note that $\sgn(\pi) = (-1)^S$.  
By Lemma \ref{char-vals},
\[  \ba
 \sum_{h \in \delta^\tau_k(H)}  (\psi \wr \lambda)(h,\pi)
  = 
\chi^\lambda(\pi)  \sum_{h \in \delta^\tau_k(H)} \prod_{s=1}^S \psi\( h_{k+i_s} h_{i_s}\) \prod_{t=1}^T \psi\( h_{i_{\ell(t)}^t} \cdots h_{i_2^t}h_{i_1^t} \)\chi^\psi\( h_{j_{\ell(t)}^t} \cdots h_{j_2^t}h_{j_1^t} \) 
\\
=
\chi^\lambda(\pi) \prod_{s=1}^S \(\sum_{h\in H} \psi\( h\cdot \leftexp{\tau}h \)\) 
\prod_{t=1}^T\( \sum_{h_1,\dots,h_{\ell(t)} \in H} \psi\( h_1\cdots h_{\ell(t)} \) \psi\( \leftexp{\tau}{h_{1}} \cdots \leftexp{\tau}{h_{\ell(t)}}\) \).\ea\]
We have $\psi(h) = \psi(\leftexp{\tau}h^{-1})$ since $\epsilon_\tau(\psi) = \pm 1$.  Therefore 
\[\ba \sum_{h_1,\dots,h_{\ell(t)} \in H} \psi\( h_1\cdots h_{\ell(t)} \) \psi\( \leftexp{\tau}{h_{1}} \cdots \leftexp{\tau}{h_{\ell(t)}}\) &= |H|^{\ell(t)-1} \sum_{h \in H} \psi\( h\) \overline{\psi\( \leftexp{\tau}{h}^{-1} \)} \\&= |H|^{\ell(t)} \langle \psi, \psi \rangle_H = |H|^{\ell(t)}.\ea\]
Substituting this and $\epsilon_\tau(\psi) = \frac{1}{|H|} \sum_{h\in H} \psi\( h\cdot \leftexp{\tau}h\)$ into our expression above, and noting that $2S+\sum_{t=1}^T 2\ell(t) = 2k$, we obtain 
$\sum_{h \in \delta^\tau_k(H)} (\psi \wr \lambda) (h, \pi)
= |H|^k(\epsilon_\tau(\psi))^S  \chi^\lambda(\pi).
$ 
Since $\sgn(\pi) = (-1)^{S}$, applying this  identity gives
\[\ba \left\langle \One, \Res_{V_k^\tau}^{G_{2k}}\( \psi \wr \lambda \)\right\rangle_{V_k^\tau} 
&= \frac{1}{|V_k^\tau|} \sum_{\pi \in W_k} \sum_{h \in \delta_k^\tau(H)} (\psi \wr \lambda)(h,\pi) 
=\left\{\barr{ll} \left \langle \One, \Res_{W_k}^{S_{2k}}\( \chi^\lambda \) \right\rangle_{W_k}, &\text{if $\epsilon_\tau(\psi) = 1$}, \\ 
\left \langle \sgn, \Res_{W_k}^{S_{2k}}\( \chi^\lambda \)\right\rangle_{W_k},
&\text{if $\epsilon_\tau(\psi) = -1$}.\earr\right.\ea\]
Our result now follows from applying Frobenius reciprocity to Lemma \ref{IRS}. 
\end{proof}

Define another subgroup of $G_{2k}$ by
\[ I_k^\tau = \left\{ (h,(\pi,\pi))  \in G_{2k}: h = (h_1,\dots,h_k,\leftexp{\tau}h_1,\dots, \leftexp{\tau}h_k) \in H^{2k},\ \pi \in S_k\right\}\] where we view $(\pi,\pi) \in S_k \times S_k$ as an element of $S_{2k}$ in the obvious way.  We then have a second lemma.

\begin{lemma}\label{lem2}
Let $\psi$ be an irreducible character of $H$ with $\epsilon_\tau(\psi) =0$ and let $\lambda,\mu \in \sP({k})$.  
Define $\varpi_k \in S_{2k}\subset G_{2k}$ as the permutation given by 
\[ \ba \varpi_k(2i-1) &= i, \\
   \varpi_k(2i) &= i+k,\ea \qquad\text{for }i=1,\dots,k.\] Then $I_k^\tau = (G_k\times G_k) \cap \varpi_k^{-1}(V_k^\tau) \varpi_k$ and
\[ \left\langle\One , \Res_{I_k^\tau}^{G_k\times G_k}
\( (\psi \wr \lambda) \odot  (\overline{\leftexp{\tau}\psi}\wr \mu)\)
\right\rangle_{I_k^\tau} = \left\{\barr{ll}  1, &\text{if }\lambda = \mu, \\ 0,&\text{otherwise}.\earr\right.\] 

\end{lemma}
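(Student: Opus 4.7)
My plan is to handle the set-theoretic identity by direct index chasing, then compute the inner product by expanding via Lemma~\ref{char-vals} and exploiting the cycle structure of $\pi \in S_k$.

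For the first claim, the permutation $\varpi_k$ is designed precisely to interchange the ``interleaved'' pairing $\{(2i-1, 2i)\}$ of $\{1,\dots,2k\}$ used in $V_k^\tau$ with the ``block'' pairing $\{(i, k+i)\}$ used in $I_k^\tau$. A short calculation shows that conjugation by $\varpi_k$ carries the diagonal $\{(\pi,\pi) : \pi \in S_k\} \subset S_{2k}$ to the subgroup of $W_k$ generated by the double transpositions $(2i-1\cyc 2j-1)(2i\cyc 2j)$; it simultaneously carries the block-paired tuples $(h_1,\dots,h_k,\leftexp{\tau}h_1,\dots,\leftexp{\tau}h_k)$ to the interleaved tuples comprising $\delta_k^\tau(H)$. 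The set-equality follows by intersecting with $G_k \times G_k$.

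For the main identity I will expand the inner product as a double sum indexed by $\pi \in S_k$ and $h' = (h_1,\dots,h_k) \in H^k$, using the description of $I_k^\tau$ inside $G_k \times G_k$ as the set of pairs $\bigl((h',\pi),(\leftexp{\tau}h',\pi)\bigr)$. Applying Lemma~\ref{char-vals} to both $\psi\wr\lambda$ and $\overline{\leftexp{\tau}\psi}\wr\mu$ factors the summand as $\chi^\lambda(\pi)\chi^\mu(\pi)$ times a product over the cycles of $\pi$. The crucial simplification is the elementary identity $\overline{\leftexp{\tau}\psi}(\leftexp{\tau}x) = \overline{\psi(x)}$, valid because $\tau^2 = 1$; under it, the $\psi$- and $\overline{\leftexp{\tau}\psi}$-contributions from each cycle combine into the modulus squared $|\psi(h_{i^t_{\ell(t)}} \cdots h_{i^t_1})|^2$, after which the variables in distinct cycles of $\pi$ decouple.

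The assembly is then routine: for each cycle of length $\ell$, the sum $\sum_{h_1,\dots,h_\ell \in H}|\psi(h_\ell \cdots h_1)|^2$ evaluates to $|H|^{\ell-1}\sum_{h\in H}|\psi(h)|^2 = |H|^\ell\langle\psi,\psi\rangle_H = |H|^\ell$, and taking the product over all cycles of $\pi$ collapses the full $h$-sum to $|H|^k$. What remains is $\frac{1}{k!}\sum_{\pi \in S_k}\chi^\lambda(\pi)\chi^\mu(\pi) = \langle\chi^\lambda,\chi^\mu\rangle_{S_k} = \delta_{\lambda,\mu}$, using that irreducible $S_k$-characters are rational-valued. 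I do not anticipate any serious obstacle here; the most delicate point is merely bookkeeping with complex conjugates and applying $\tau^2 = 1$ correctly. The hypothesis $\epsilon_\tau(\psi) = 0$ is not logically used in this computation, but its role in the broader construction is presumably to ensure that $\psi$ and $\overline{\leftexp{\tau}\psi}$ are distinct irreducible characters, so that this lemma is what is needed to analyze the remaining case not covered by Lemma~\ref{lem1}.
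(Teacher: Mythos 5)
Your proposal is correct and follows essentially the same route as the paper: identify $\varpi_k^{-1}(V_k^\tau)\varpi_k$ explicitly to get the set identity, then apply Lemma~\ref{char-vals} together with $\overline{\leftexp{\tau}\psi}(\leftexp{\tau}x)=\overline{\psi(x)}$ so that the $h$-sum collapses cycle by cycle to $|H|^k\chi^\lambda(\pi)\overline{\chi^\mu(\pi)}$, leaving $\langle\chi^\lambda,\chi^\mu\rangle_{S_k}=\delta_{\lambda\mu}$. Your side remark that the hypothesis $\epsilon_\tau(\psi)=0$ is not used in the computation is also accurate; the paper's proof does not use it either.
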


\begin{proof}
We first observe that if $\omega = (1\cyc k+1)(2\cyc k+2)\cdots(k\cyc 2k)  = \varpi_k\omega_k \varpi_k^{-1}\in S_{2k}$ then 
\[ \varpi_k^{-1}(V_k^\tau) \varpi_k = \left\{ (h,\pi) : \pi \in C_{S_{2k}}(\omega),\ h = (h_1,\dots,h_k,\leftexp{\tau}h_1,\dots, \leftexp{\tau}h_k) \in H^{2k}\right\}.\]  It immediately follows that $I_k^\tau = (G_k\times G_k) \cap \varpi_k^{-1}(V_k^\tau) \varpi_k$.  Next note that $\overline{\leftexp{\tau}\psi}(\leftexp{\tau}h)= \overline{\psi(h)}$ for $h \in H$ and that $\chi^\mu \in \Irr(S_n)$ is real-valued.  Hence   
\[ \(\overline{\leftexp{\tau}\psi} \wr \mu\) (\leftexp{\tau}h,\pi) = \overline{{(\psi\wr \mu)} (h,\pi)}\] for  $\pi \in S_k$ and $h \in H^k$, where we let $\leftexp{\tau}h = (\leftexp{\tau}h_1,\dots,\leftexp{\tau}h_k)$.  Therefore 
by Lemma \ref{char-vals} and an argument similar to the one used in the previous lemma, if $\pi \in S_k$ then we have
\[\ba  \sum_{h \in H^k} (\psi \wr \lambda)(h,\pi) \cdot ({\overline{\leftexp{\tau}\psi}}\wr \mu) (\leftexp{\tau}h,\pi)
&
=\sum_{h \in H^k} {(\psi\wr \lambda)}(h,\pi) \cdot \overline{{(\psi\wr \mu)} (h,\pi)} 
= |H|^k\chi^\lambda(\pi) \overline{\chi^\mu(\pi)}.
\ea\]
Our result now follows from
\[ \ba 
\left\langle\One , \Res_{I_k^\tau}^{G_k\times G_k}\((\psi \wr \lambda) \odot (\overline{\leftexp{\tau}\psi} \wr \mu)\)\right\rangle_{I_k^\tau} 
&
=
\frac{1}{|I_k^\tau|}\sum_{(\pi,h) \in I_k^\tau} \( (\psi\wr\lambda) \odot (\overline{\leftexp{\tau}\psi} \wr \mu)\) (h,\pi)
\\& =  \frac{1}{|I_k^\tau|} \sum_{\pi \in S_k} \sum_{h \in H^k} (\psi\wr \lambda)(h,\pi) \cdot ({\overline{\leftexp{\tau}\psi}}\wr \mu) (\leftexp{\tau}h,\pi)
= \langle \chi^\lambda,\chi^\mu\rangle_{S_k}.
\ea\]
\end{proof}

\newcommand\sR{\mathscr{R}}

We are now prepared to prove the following instrumental proposition.

\begin{proposition}\label{main-prop} The induction of the trivial character of $V_k^\tau$ to $G_{2k}$ decomposes as the multiplicity free sum
\[ \Ind^{G_{2k}}_{V_k^\tau}(\One) = \sum_{\theta} \chi_\theta,\]
where the sum is over all $\theta \in \sP_H(2k)$  such that for every irreducible character $\psi \in \Irr(H)$,
\begin{enumerate}
\item[(1)] $\theta(\psi) = \theta(\overline{\leftexp{\tau}\psi})$;
\item[(2)] $\theta(\psi)$ has all even columns if $\epsilon_\tau(\psi) = -1$;
\item[(3)] $\theta(\psi)$ has all even rows if $\epsilon_\tau(\psi) = 1$.
\end{enumerate}
\end{proposition}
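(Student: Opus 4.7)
We compute the multiplicity $\langle\Ind^{G_{2k}}_{V_k^\tau}(\One),\chi_\theta\rangle_{G_{2k}}$ for each $\theta\in\sP_H(2k)$ and show it equals $1$ when $\theta$ satisfies (1)--(3) and $0$ otherwise. By Frobenius reciprocity this multiplicity equals $\langle\One,\chi_\theta|_{V_k^\tau}\rangle_{V_k^\tau}$; writing $\chi_\theta=\Ind_{S_\theta}^{G_{2k}}(\eta_\theta)$ with $\eta_\theta=\bigodot_\psi\psi\wr\theta(\psi)$, Mackey's formula expresses it as a sum over double cosets in $V_k^\tau\backslash G_{2k}/S_\theta$.

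Since $H^{2k}\subseteq S_\theta$ acts trivially on cosets of $S_\theta$, the double-coset space is canonically identified with $W_k\backslash S_{2k}/S_{\theta'}$, where $S_{\theta'}=\prod_\psi S_{|\theta(\psi)|}$. A $W_k$-orbit is uniquely specified by nonnegative integers $k_\psi$ and $k_{\{\psi,\psi'\}}$ (for unordered pairs $\psi\ne\psi'$) recording, for a representative ordered block decomposition $\{B_\psi\}$ of $\{1,\dots,2k\}$ with $|B_\psi|=|\theta(\psi)|$, how many $\omega_k$-pairs lie entirely inside $B_\psi$ or are split between $B_\psi$ and $B_{\psi'}$; these satisfy $|\theta(\psi)|=2k_\psi+\sum_{\psi'\ne\psi}k_{\{\psi,\psi'\}}$. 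After using the $W_k$-freedom to normalize the orientation of split pairs with respect to the $\delta_k^\tau(H)$-structure, a direct inspection of coordinates gives
\[ V_k^\tau\cap gS_\theta g^{-1} \;\cong\; \prod_\psi V_{k_\psi}^\tau \;\times\; \prod_{\{\psi,\psi'\}} I_{k_{\{\psi,\psi'\}}}^\tau.\]
The restriction of $\eta_\theta^g$ to this subgroup is obtained by branching each factor $\psi\wr\theta(\psi)$ through the Young subgroup $G_{2k_\psi}\times\prod_{\psi'\ne\psi}G_{k_{\{\psi,\psi'\}}}$ of $G_{|\theta(\psi)|}$, producing via Littlewood--Richardson a sum over partitions $\alpha_\psi\vdash 2k_\psi$ and $\beta_{\psi,\psi'}\vdash k_{\{\psi,\psi'\}}$ of terms $(\psi\wr\alpha_\psi)\odot\bigodot_{\psi'\ne\psi}(\psi\wr\beta_{\psi,\psi'})$.

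Lemmas \ref{lem1} and \ref{lem2} now sharply constrain which terms survive. If $\epsilon_\tau(\psi)=\pm 1$, every split-pair factor involving $\psi$ would require $\psi'=\overline{\leftexp{\tau}\psi}=\psi$, which is impossible, so all of $\theta(\psi)$ is forced into the $V_{k_\psi}^\tau$ factor with $\alpha_\psi=\theta(\psi)$ of all-even-row (resp.\ all-even-column) shape -- this is precisely condition (3) (resp.\ (2)). If $\epsilon_\tau(\psi)=0$, Lemma \ref{lem1} kills every $V_{k_\psi}^\tau$-contribution, so $\theta(\psi)$ must lie entirely in split-pair factors, and Lemma \ref{lem2} restricts these to the pair $\psi'=\overline{\leftexp{\tau}\psi}$ subject to $\beta_{\psi,\psi'}=\beta_{\psi',\psi}$, which forces $\theta(\psi)=\theta(\overline{\leftexp{\tau}\psi})$ -- this is condition (1).

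When $\theta$ satisfies (1)--(3) the analysis pins down a unique admissible double coset together with a unique admissible LR-branching (namely, placing all of $\theta(\psi)$ in a single slot so the only LR coefficient appearing is $c^{\theta(\psi)}_{\theta(\psi)}=1$), yielding multiplicity $1$; otherwise the multiplicity is $0$. The main obstacle is verifying the explicit product factorization of $V_k^\tau\cap gS_\theta g^{-1}$ announced above, which requires careful bookkeeping of how $\omega_k$-pairs interact with the blocks $\{B_\psi\}$ and how the $\tau$-symmetric structure $\delta_k^\tau(H)$ distributes across them; once this step is in place, the rest follows mechanically from the two preceding lemmas.
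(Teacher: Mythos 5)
Your proposal is correct in outline but takes a genuinely different route from the paper. The paper only runs the Mackey machine far enough to exhibit \emph{one} double coset $V_k^\tau s S_\theta$ at which Lemmas \ref{lem1} and \ref{lem2} produce a nonzero inner product, concluding that each $\chi_\theta$ with $\theta$ satisfying (1)--(3) occurs with multiplicity \emph{at least} one; it then closes the argument by a degree count, showing via the multinomial formula that $\sum_\theta \deg(\chi_\theta) = |G_{2k}|/|V_k^\tau|$, so that equality of dimensions forces the decomposition. You instead carry out the full Mackey analysis over all of $V_k^\tau\backslash G_{2k}/S_\theta$, identifying the double cosets with $W_k\backslash S_{2k}/S_{\theta'}$ via the parameters $k_\psi$, $k_{\{\psi,\psi'\}}$ and showing that exactly one coset survives with contribution $1$. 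Your identification of the double cosets and the factorization $V_k^\tau\cap gS_\theta g^{-1}\cong \prod_\psi V_{k_\psi}^\tau\times\prod_{\{\psi,\psi'\}} I_{k_{\{\psi,\psi'\}}}^\tau$ are correct (and consistent with the single $D_s$ computed in the paper). What your route buys is that it dispenses entirely with the degree computation and localizes the multiplicity coset by coset; what it costs is the double-coset bookkeeping that the paper deliberately sidesteps.

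One caveat you should address: the vanishing statements you need are not literally the statements of Lemmas \ref{lem1} and \ref{lem2}. Lemma \ref{lem1} is stated only for $\epsilon_\tau(\psi)=\pm1$, so the claim that ``Lemma \ref{lem1} kills every $V_{k_\psi}^\tau$-contribution'' when $\epsilon_\tau(\psi)=0$ requires rerunning the computation in its proof: for $k_\psi\geq 1$ every $\pi\in W_{k_\psi}$ has a $2$-cycle of $\omega_{k_\psi}$ or a pair of matched longer cycles, and the corresponding factors $\sum_h\psi(h\cdot\leftexp{\tau}h)=|H|\,\epsilon_\tau(\psi)$ and $|H|^{\ell}\langle\psi,\overline{\leftexp{\tau}\psi}\rangle_H$ both vanish when $\epsilon_\tau(\psi)=0$. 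Similarly, Lemma \ref{lem2} treats only the pair $(\psi,\overline{\leftexp{\tau}\psi})$ with $\epsilon_\tau(\psi)=0$; to rule out split pairs between blocks $B_\psi$ and $B_{\psi'}$ with $\psi'\neq\overline{\leftexp{\tau}\psi}$ you need the analogous computation yielding $|H|^{\ell}\langle\psi,\overline{\leftexp{\tau}\psi'}\rangle_H=0$. Both extensions are true and follow by the same manipulations as in the printed proofs, but as written your argument cites the lemmas outside their stated hypotheses, so these cases must be supplied explicitly for the ``multiplicity zero otherwise'' half of the proposition to be complete.
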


This result generalizes Proposition 3 in \cite{B91-2}, which treats the case $\tau = 1$.  Our proof  derives from a pair of detailed but straightforward calculations using the preceding lemmas.  This approach differs somewhat from the inductive method used by Baddeley in \cite{B91-2}.

\begin{proof}
Choose $\theta \in \sP_H(2k)$ satisfying (1)-(3).  We first show that $\chi_\theta$ appears as a constituent of $\Ind^{G_{2k}}_{V_k^\tau}(\One)$ and then demonstrate that the given decomposition has the correct degree.
To this end,  define 
\[\eta_\theta=  \bigodot_{\psi \in \Irr(H)} \psi \wr  \theta(\psi),\qquad\text{so that}\qquad
\chi_\theta = \Ind_{S_\theta}^{G_{2k}}(\eta_\theta).\]
Let $s \in S_{2k}$ and define the subgroup $D_s = S_\theta \cap s^{-1}(V_k^\tau)s.$ 
Then by Frobenius reciprocity and Mackey's theorem, we have 
\[ \ba \left\langle \Ind_{V_k^\tau}^{G_{2k}}(\One), \chi_\theta\right\rangle_{G_{2k}}
&
=
\left\langle \Res^{G_{2k}}_{S_\theta} \( \Ind_{V_k^\tau}^{G_{2k}}\( \One\)\), \eta_\theta\right\rangle_{S_\theta}
&
\quad\text{(by Frobenius reciprocity),}
\\&
\geq
 \left \langle \Ind_{D_s}^{S_\theta} \( \One\), \eta_\theta\right\rangle_{S_\theta}
&
\quad\text{(by Mackey's theorem),}
\\&
=
\left \langle  \One, \Res_{D_s}^{S_\theta} (\eta_\theta)\right\rangle_{D_s}
&
\quad\text{(by Frobenius reciprocity).}
\ea\] 
%
%

Recall from Section \ref{prelim} that if $\psi \in \Irr(H)$ then the two irreducible characters $\psi, \overline{\leftexp{\tau}\psi}$ of $H$ are distinct if and only if $\epsilon_\tau(\psi) = 0$.  Therefore we can list the distinct elements of $\Irr(H)$ in the form \[\psi_1, \psi_1',\dots, \psi_r, \psi_r', \vartheta_1,\dots, \vartheta_s,\] where for all $i$ we have $\psi_i' = \overline{\leftexp{\tau}\psi_i}$ and $\epsilon_\tau(\psi_i) = \epsilon_\tau(\psi_i') = 0$ and $\epsilon_\tau(\vartheta_i) \neq 0$. 
Without loss of generality, we can assume that the products defining $\rho_\theta$ and $S_\theta$ proceed in the order of this list; a different ordering corresponds to a conjugate choice of $s$ in what follows.  Since $|\theta(\psi_i)| = |\theta(\psi_i')|$ and $|\theta(\vartheta_i)|$ is even for all $i$, if we define $s \in S_{2k}$ as the element 
\[ s = \(\varpi_{|\theta(\psi_1)|} , \dots, \varpi_{|\theta(\psi_r)|}, 1,\dots,1\) \in \prod_{i=1}^r S_{|\theta(\psi_i)|+ |\theta(\psi_i')|} \times \prod_{i=1}^s S_{|\theta(\vartheta_i)|} \subset S_{2k}\] where $\varpi_k$ for $k={|\theta(\psi_1)|}, \dots,{|\theta(\psi_r)|}$ is as in Lemma \ref{lem2}, then 
$D_s =\prod_{i=1}^r I_{|\theta(\psi_i)|}^\tau \times
 \prod_{i=1}^s V^\tau_{|\theta(\vartheta_i)|/2}.$
Consequently  $
 \langle  \One, \Res_{D_s}^{S_\theta} (\eta_\theta)\rangle_{D_s} = 
\varepsilon_0 \varepsilon_{\pm 1}$
where
\[ \ba \varepsilon_0 &=
\prod_{i=1}^r  \left\langle\One , \Res_{I_{|\theta(\psi_i)|}^\tau}^{G_{|\theta(\psi_i)|}\times G_{|\theta(\psi_i')|}}\(\(\psi_i \wr \theta(\psi_i)\) \odot \({\psi_i'} \wr \theta({\psi_i'}) \) \)\right\rangle_{I_{|\theta(\psi_i)|}^\tau}
,\\
\varepsilon_{\pm1} &= 
\prod_{i=1}^s 
 \left \langle  \One, \Res^{G_{|\theta(\vartheta_i)|}}_{V_{|\theta(\vartheta_i)|/2}^\tau} \(\vartheta_i \wr \theta(\vartheta_i) \)\right\rangle_{V_{|\theta(\vartheta_i)|/2}^\tau}
.
\ea
\] We have $\varepsilon_0 = 1$ by Lemma \ref{lem2} 
and $\varepsilon_{\pm1} = 1$ by Lemma \ref{lem1} 
 and so we conclude that if $\theta \in \sP_H(n)$ satisfies (1)-(3), then $\chi_\theta$ appears as a constituent of $\Ind_{V_k^\tau}^{G_n}(\One)$ with multiplicity at least one.   
 
 \def\cF{\mathcal{F}}
 
 To prove that this multiplicity is exactly one and that these are the only constituents, we show that both sides of the equation in the proposition statement have the same degree.  Define $\cF$ as the set  of functions $f: \Irr(H)\to \ZZ_{\geq 0}$ which have $f(\psi) =  |\theta(\psi)|$ for some $\theta \in \sP_H(2k)$ satisfying (1)-(3).  
Then 
the sum of the degrees of $\chi_\theta$ as $\theta \in \sP_H(2k)$ varies over all maps satisfying (1)-(3) is
  \[ \sum_\theta \deg(\chi_\theta) = \sum_\theta (2k)! \prod_{\psi \in \Irr(H)}  \frac{\deg(\psi)^{|\theta(\psi)|} \deg\(\chi^{\theta(\psi)}\)}{\theta(\psi)!} =
\sum_{f \in \cF} n!\hs \Pi_0(f)\hs \Pi_{\pm1}(f) \] 
where
\[\ba 
\Pi_0(f) &= 
\prod_{i=1}^r\( \sum_{\lambda \in \sP(f(\psi_i))} \( \frac{\deg(\psi_i)^{f(\psi_i)} \deg\(\chi^{\lambda}\)}{f({\psi_i})!}\)\(\frac{\deg({\psi_i'})^{f(\psi_i')} \deg\(\chi^{\lambda}\)}{f(\psi_i')!}\)\),
\ea\] and \[\ba
\Pi_{\pm1}(f) &= \prod_{\substack{\psi \in \Irr(H) \\ \epsilon_\tau(\psi) = -1}}\( \sum_{\substack{\lambda \in \sP(f(\psi))\text{ with}\\\text{all even columns}}} 
\frac{\deg(\psi)^{f(\psi)} \deg\(\chi^{\lambda}\)}{f(\psi)!}\)\prod_{\substack{\psi \in \Irr(H) \\ \epsilon_\tau(\psi) = 1}}\( \sum_{\substack{\lambda \in \sP(f(\psi))\text{ with}\\\text{all even rows}}} 
\frac{\deg(\psi)^{f(\psi)} \deg\(\chi^{\lambda}\)}{f(\psi)!}\) 
.
 \ea\]
Note that $\deg(\psi_i) = \deg(\psi_i')$ and $f({\psi_i}) = f(\psi_i')$ for all $i$ if $f \in \cF$.  Therefore 
\[ \Pi_0(f) 
=\prod_{i=1}^r
\frac{\deg(\psi_i)^{2f(\psi_i)} }{(f(\psi_i)!)^2} \(\sum_{\lambda \in \sP(f(\psi_i))} \deg\(\chi^\lambda\)^2\)
=\prod_{i=1}^r
\frac{\(2\deg(\psi_i)^2\)^{f(\psi_i)} }{2^{f(\psi_i)}f(\psi_i)!}.\]
Next, recall from Lemma \ref{IRS} that the sum $ \sum_\lambda \deg\(\chi^\lambda\) $ as $\lambda$ varies over the partitions of $2n$ with all even rows is  equal to $ \frac{(2n)!}{2^nn!}$, and that the sum over $\lambda$ with all even columns has the same value.  Thus
\[ \Pi_{\pm1} (f) = \prod_{i=1}^s\frac{\(\deg(\vartheta_i)^2\)^{f(\vartheta_i)/2}}{2^{f(\vartheta_i)/2} \(f(\vartheta_i)/2\)!}.\] 
As $f$ varies over all elements of $\cF$, the numbers $f(\psi_1),\dots,f(\psi_r), f(\vartheta_1)/2,\dots,f(\vartheta_s)/2$ range over all compositions of $k$.  Therefore, after substituting in the preceding expressions, we obtain by the multinomial formula 
  \[\ba\sum_\theta \deg(\chi_\theta)
&
=
\frac{ (2k)!}{2^kk!}  
\sum_{f \in \cF} k!
 \prod_{i=1}^r
\frac{\(2\deg(\psi_i)^2\)^{f(\psi_i)} }{f(\psi_i)!}
\prod_{i=1}^s
\frac{\(\deg(\vartheta_i)^2\)^{f(\vartheta_i)/2}}{(f(\vartheta_i)/2)!}
\\
&
=\frac{ (2k)!}{2^kk!}  \( \sum_{i=1}^r 2\deg(\psi_i)^2 + \sum_{i=1}^s \deg(\vartheta_i)^2 \)^k 
=\frac{ (2k)!}{2^kk!}  \( \sum_{\psi \in \Irr(H)} \deg(\psi)^2 \)^k 
 = \frac{|G_{2k}|}{|V_k^\tau|}
.
\ea
 \] Since this is precisely the degree of $\Ind_{V_k^\tau}^{G_n}(\One)$, the given decomposition 
   now follows by dimensional considerations.
\end{proof}

\subsection{Construction of a Model}\label{defns-section}

With this proposition in hand, we can now construct a generalized involution model for $G_n$ from any generalized involution model for $H$.  As above, we fix an automorphism $\tau \in \Aut(H)$ with $\tau^2 = 1$. 
Throughout this section, we assume there exists a model for $H$ given by a set of linear characters $\{ \lambda_i : H_i \to \CC \}_{i=1}^m$ for some positive integer $m$ and some subgroups $H_i\subset H$.

Our notation is intended to coincide with that of \cite{B91-2} when $\tau=1$.  Let $\sU_m$ denote the set of vectors $(x_0,x_1,\dots,x_m)$ with all entries nonnegative integers, and define \[ \sU_m(n) = \left\{ x \in \sU_m : 2x_0 + \sum_{i=1}^m x_i = n\right\}.\]  
Let $\sigma_k^\tau : V_k^\tau\rightarrow \{\pm1\}$ be the linear character given by  
\[ \sigma_k^\tau(h,\pi) = \sgn(\pi),\qquad\text{for }(h,\pi) \in V_k^\tau.\]  
For each $x \in \sU_m(n)$, we define a subgroup $G_x^\tau \subset G_n$ and a linear character $\phi_x^\tau: G_x^\tau \to \CC$ by
\be\label{grp-rep-defs} G_x^\tau = V_{x_0}^\tau \times \prod_{i=1}^m (H_i \wr S_{x_i})\qquad\text{and}\qquad \phi_x^\tau = \sigma_{x_0}^\tau \odot\bigodot_{i=1}^m \lambda_i \wr (x_i), \ee where on the right hand side $(x_i)$ denotes the trivial partition in $\sP(x_i)$ and we ignore terms corresponding to $i$ if $x_i=0$.

Given $x \in \sU_m(n)$, define 
\[\sR(x) = \left\{ \theta \in \sP_H(n) :  \barr{l}  \text{$x_i$ for each $i>0$ is the sum  of the number of odd columns in} \\\text{$\theta(\psi)$ as $\psi$ ranges over the irreducible constituents  of $\Ind_{H_i}^H (\lambda_i)$}
\earr
\right\}.\]
We then have the following extension of Theorem 1 in \cite{B91-2}, which treats the special case $\tau = 1$.  
  

\begin{theorem}\label{thm1}
Suppose 
$\epsilon_\tau(\psi) = 1$ for every irreducible character $\psi$ of $H$.  Then 
\[\Ind_{G_x^\tau}^{G_{n}} ( \phi_x^\tau) = \sum_{\theta \in \sR(x)} \chi_\theta,\qquad\text{for }x\in \sU_m(n),\] and $\{\phi_x^\tau : G_x^\tau \to \CC\}_{x \in \sU_m(n)}$ is a
model for $G_n = H \wr S_n$.
\end{theorem}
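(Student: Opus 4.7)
The plan is to compute $\Ind_{G_x^\tau}^{G_n}(\phi_x^\tau)$ by exploiting the direct-product structure of $G_x^\tau$ and then sum over $x\in\sU_m(n)$. By transitivity of induction,
\[
\Ind_{G_x^\tau}^{G_n}(\phi_x^\tau)=\Ind_{G_{2x_0}\times\prod_i G_{x_i}}^{G_n}\!\(\Ind_{V_{x_0}^\tau}^{G_{2x_0}}(\sigma_{x_0}^\tau)\odot \bigodot_{i=1}^m\Ind_{H_i\wr S_{x_i}}^{G_{x_i}}\(\lambda_i\wr(x_i)\)\),
\]
so it suffices to evaluate each inner induced character and then combine them via a wreath-product Pieri rule. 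The hypothesis $\epsilon_\tau(\psi)=1$ enters only through the $V_{x_0}^\tau$-factor.

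First I would prove a sign-twisted analogue of Proposition~\ref{main-prop}: under $\epsilon_\tau(\psi)=1$ for all $\psi$,
\[
\Ind_{V_{x_0}^\tau}^{G_{2x_0}}(\sigma_{x_0}^\tau)=\sum_\alpha\chi_\alpha,
\]
summed over $\alpha\in\sP_H(2x_0)$ whose components $\alpha(\psi)$ all have even columns. The argument copies Proposition~\ref{main-prop}'s: Lemma~\ref{lem1}'s calculation gains a factor $\sgn(\pi)$, so the key inner product becomes $\langle\sgn,\Res_{W_k}^{S_{2k}}\chi^\lambda\rangle_{W_k}$, which equals $1$ iff $\lambda$ has all even columns by Lemma~\ref{IRS} applied to the fixed-point-free involution $\omega_k$. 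Lemma~\ref{lem2} plays the same role as before, and the degree check adapts verbatim. The inductions $\Ind_{H_i\wr S_{x_i}}^{G_{x_i}}(\lambda_i\wr(x_i))$, being $\tau$-independent, are treated exactly as in \cite{B91-2}: using the model property $\Ind_{H_i}^H(\lambda_i)=\sum_{\psi\in\Psi_i}\psi$ (with $\Psi_i$ the set of irreducible constituents), one obtains $\sum_\nu\chi_\nu$ summed over $\nu\in\sP_H(x_i)$ with $\nu(\psi)=(k_\psi)$ a single-row partition for each $\psi\in\Psi_i$, $\nu(\psi)=\emptyset$ otherwise, and $\sum_\psi k_\psi=x_i$.

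Combining these via the standard wreath-product induction rule $\Ind_{G_m\times G_k}^{G_{m+k}}(\chi_\nu\odot\chi_\mu)=\sum_\theta\(\prod_\psi c^{\theta(\psi)}_{\nu(\psi),\mu(\psi)}\)\chi_\theta$, the multiplicity of $\chi_\theta$ in $\Ind_{G_x^\tau}^{G_n}(\phi_x^\tau)$ factors over $\psi\in\Irr(H)$ as
\[
\prod_\psi\sum_{\alpha(\psi):\text{ even cols}}\sum_{k_\psi\ge 0}c^{\theta(\psi)}_{\alpha(\psi),(k_\psi)},
\]
subject to the side constraint $\sum_{\psi\in\Psi_i}k_\psi=x_i$ for each $i>0$. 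Pieri's rule reduces the inner sum to counting pairs $(\alpha(\psi),k_\psi)$ with $\alpha(\psi)\subseteq\theta(\psi)$ even-columned and $\theta(\psi)/\alpha(\psi)$ a horizontal strip; exactly one such pair exists (keep each even-length column of $\theta(\psi)$ intact, remove one box from the bottom of each odd-length column), forcing $k_\psi=c(\theta(\psi))$, the number of odd columns. Hence $\chi_\theta$ appears with multiplicity $1$ precisely when $x_i=\sum_{\psi\in\Psi_i}c(\theta(\psi))$ for all $i>0$, which is exactly $\theta\in\sR(x)$.

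For the model claim, the assignment $\theta\mapsto x$ defined by $x_i=\sum_{\psi\in\Psi_i}c(\theta(\psi))$ and $x_0=\tfrac{1}{2}\sum_\psi(|\theta(\psi)|-c(\theta(\psi)))$ is a bijection from $\sP_H(n)$ onto $\bigsqcup_{x\in\sU_m(n)}\sR(x)$, the value $x_0\in\ZZ_{\ge 0}$ being legal because stripping one box from each odd column of $\theta(\psi)$ yields an all-even-columned partition, which necessarily has even size. Summing over $x$ therefore gives $\sum_{\theta\in\sP_H(n)}\chi_\theta$, the multiplicity-free sum of all irreducibles of $G_n$. The main obstacle is not conceptual but one of bookkeeping: the collapse of the Pieri-coefficient factorization to multiplicity $1$ hinges on the uniqueness of the even-column-plus-horizontal-strip decomposition of each $\theta(\psi)$, which is the combinatorial heart of the argument.
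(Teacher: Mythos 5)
Your proposal is correct and follows essentially the same route as the paper: transitivity of induction to split off the $V_{x_0}^\tau$-factor, Proposition \ref{main-prop} (in the all-even-columns form) together with Baddeley's decomposition of $\Ind_{H_i\wr S_{x_i}}^{G_{x_i}}(\lambda_i\wr(x_i))$, and then the Littlewood--Richardson/Pieri bookkeeping. The only cosmetic difference is that you re-run the Lemma \ref{lem1} computation with a $\sgn(\pi)$ factor to handle $\sigma_{x_0}^\tau$, whereas the paper simply tensors $\Ind_{V_k^\tau}^{G_{2k}}(\One)$ with $\widetilde\sgn$ and uses $\chi_\theta\otimes\widetilde\sgn=\chi_{\theta'}$; your explicit uniqueness argument for the even-column-plus-horizontal-strip decomposition correctly fills in what the paper compresses into an appeal to Young's rule.
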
 

The proof of this is in principle the same as that of \cite[Theorem 1]{B91-2} with all references to Baddeley's Proposition 3 replaced by ones to our Proposition \ref{main-prop}.  This does not quite work in practice, however, since Baddeley's proof in \cite{B91-2} makes no mention of Proposition 3 and instead uses two intermediate results which we have sidestepped.  For completeness we therefore give the following proof.

\begin{proof}
By the transitivity of induction we have
\be\label{1} \ba \Ind_{G_x^\tau}^{G_{n}} ( \phi_x^\tau) &= 
\Ind_{G_{2x_0} \times G_{x_1}\times \cdots \times G_{x_m}}^{G_n}\(\Ind_{V_{x_0}^\tau}^{G_{2x_0}} (\sigma_{x_0}^\tau) \odot \bigodot_{i=1}^m \Ind_{H_i \wr S_{x_i}}^{G_{x_i}} \(\lambda_i \wr (x_i)\)\).
\ea\ee  
Note that if $\theta \in \sP_H(n)$, then $\chi_\theta \otimes \widetilde\sgn = \chi_{\theta'}$ where $\theta' \in \sP_H(n)$ is defined by setting $\theta' (\psi)$ equal to the transpose of $\theta(\psi)$.  Therefore,  
since $\epsilon_\tau(\psi) = 1$ for all $\psi \in \Irr(H)$, we have by Proposition \ref{main-prop} that
$\Ind_{V_k^\tau}^{G_{2k}} (\sigma_k^\tau) = \Ind_{V_k^\tau}^{G_{2k}} (\One) \otimes \widetilde \sgn = \sum_\theta \rho_\theta$ 
 where the sum ranges over all $\theta \in \sP_H(n)$ such that $\theta(\psi)$ has all even columns for all $\psi \in \Irr(H)$. 
 Also, Proposition 1 in \cite{B91-2} states that 
$\Ind_{ H_i \wr S_{x_i}}^{G_{x_i}}\( \lambda_i \wr (x_i) \) = \sum_{\theta} \chi_\theta$ 
where the sum is over all $\theta \in \sP_H(x_i)$ such that $\theta(\psi)$ is the zero partition if $\psi$ is not a constituent of $\Ind_{H_i}^H(\lambda_i)$ and a trivial partition otherwise.

Given these facts, we can completely decompose $\Ind_{G_x^\tau}^{G_{n}} ( \phi_x^\tau)$ by using Lemma 1 in \cite{B91-2}, which shows that if $\psi$ is a representation of $H$ and $\alpha \in \sP(a)$ and $\beta \in \sP(b)$, then 
$\Ind_{G_a\times G_b}^{G_{a+b}}\(  (\psi \wr \alpha)\odot (\psi \wr \beta) \) = \sum_{\gamma \in \sP(a+b)} c_{\alpha,\beta}^\gamma (\psi \wr \gamma)$ where the coefficients $c_{\alpha,\beta}^\gamma$ are the nonnegative integers afforded by the Littlewood-Richardson rule.  
 Thus, after applying our substitutions to (\ref{1}) we can invoke Young's rule to obtain the desired decomposition.  
\end{proof}

The automorphism $\tau \in \Aut(H)$ naturally extends to an automorphism of $H^n$ and of $G_n$ via the definitions
\be\label{extend} \ba \leftexp{\tau}(h_1,\dots,h_n) & \overset{\mathrm{def}}= (\leftexp{\tau}h_1,\dots, \leftexp{\tau}h_n), &&\qquad\text{for }(h_1,\dots,h_n) \in H^n, \\ 
\leftexp{\tau}(h,\pi) &\overset{\mathrm{def}}= (\leftexp{\tau}h,\pi),&&\qquad \text{for }\pi \in S_n,\ h\in H^n
.
\ea\ee As in (\ref{omega_k}), let 
$\omega_k =(1\cyc 2)(3\cyc 4)\cdots (2k-1\cyc 2k) \in S_{2k},$ where by convention $\omega_0 = 1$.
We now have the following generalization of Theorem 2 in \cite{B91-2}.  

\begin{theorem}\label{main-thm} 
Suppose $\{\lambda_i : H_i\to \CC\}_{i=1}^m$  is a generalized involution model for $H$ with respect to $\tau \in \Aut(H)$, so that there exists a set $\{\varepsilon_i\}_{i=1}^m$ of orbit representatives in $\cI_{H,\tau}$ with $H_i = C_{H,\tau}(\varepsilon_i)$. For each $x \in \sU_m(n)$, define 
\[ \varepsilon_x = \((\underbrace{1,\dots, 1}_{2x_0\text{ times}}, \underbrace{\varepsilon_1,\dots, \varepsilon_1}_{x_1\text{ times}}, \underbrace{\varepsilon_2,\dots,\varepsilon_2}_{x_2\text{ times}},\dots, \underbrace{\varepsilon_m,\dots,\varepsilon_m}_{x_m\text{ times}}) , \omega_{x_0}\) \in G_n.\]
If we extend $\tau$ to an automorphism of $G_n$ by (\ref{extend}), then the linear characters
$\{ \phi_x^\tau : G_x^\tau \rightarrow \CC \}_{x \in \sU_m(x)}$ form a generalized involution model for $G_n$ with respect to $\tau$.

%
%
%
 
\end{theorem}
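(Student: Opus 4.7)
The strategy is to separate the character-theoretic content of the theorem, which has already been packaged into Theorem \ref{thm1}, from the structural bookkeeping needed to recognize the subgroups $G_x^\tau$ as twisted centralizers and the elements $\varepsilon_x$ as orbit representatives. The first step is to observe that since $\{\lambda_i : H_i \to \CC\}_{i=1}^m$ is a generalized involution model for $H$ with respect to $\tau$, Theorem \ref{bg-thm} forces $\epsilon_\tau(\psi) = 1$ for every $\psi \in \Irr(H)$. The hypothesis of Theorem \ref{thm1} is then met, so that theorem immediately yields that $\{\phi_x^\tau : G_x^\tau \to \CC\}_{x \in \sU_m(n)}$ is a model for $G_n$. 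What remains is the structural claim that $G_x^\tau = C_{G_n,\tau}(\varepsilon_x)$ and that $\{\varepsilon_x : x \in \sU_m(n)\}$ is a complete, irredundant set of representatives of the $\tau$-twisted conjugacy classes on $\cI_{G_n,\tau}$.

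I would verify the centralizer identity by direct computation. Writing $(k,\sigma) \in G_n$ and expanding the twisted conjugate $(k,\sigma) \cdot \varepsilon_x \cdot \leftexp{\tau}{(k,\sigma)}^{-1}$ via the wreath product multiplication rule, the equation that this equal $\varepsilon_x$ splits along the block structure of $\varepsilon_x$: a paired block of size $2x_0$ on which $\omega_{x_0}$ acts nontrivially and the $H$-tuple is the identity, together with $m$ fixed blocks of sizes $x_1,\dots,x_m$ on which $\omega_{x_0}$ acts trivially and the $H$-tuple takes the constant value $\varepsilon_i$. On the paired block the constraints on $(k,\sigma)$ are precisely those cutting out $V_{x_0}^\tau$, namely $\sigma \in W_{x_0}$ and $k \in \delta_{x_0}^\tau(H)$. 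On the $i$th fixed block they reduce to $\sigma$ freely permuting the $x_i$ indices while each coordinate of $k$ must $\tau$-twisted centralize $\varepsilon_i$, i.e.\ lie in $H_i$; this assembles into $H_i \wr S_{x_i}$. Putting the blocks together yields the identity $G_x^\tau = C_{G_n,\tau}(\varepsilon_x)$, and the same computation shows in passing that each $\varepsilon_x$ lies in $\cI_{G_n,\tau}$.

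For the orbit classification I would normalize an arbitrary $(h,\pi) \in \cI_{G_n,\tau}$ in stages. The component $\pi$ is an involution in $S_n$, so $(h,\pi)$ can be twisted-conjugated by some $(1,\sigma) \in G_n$ to bring $\pi$ into the standard form $\omega_k$ for some $k \geq 0$. The twisted involution condition then forces $h_{2s} = \leftexp{\tau}{h_{2s-1}}^{-1}$ on the support of $\omega_k$, with $h_{2s-1}$ free, so further twisted conjugation by an appropriate element of $H^n$ supported on the first $2k$ coordinates can be used to set every $h_{2s-1}$ equal to $1$. On the fixed points of $\omega_k$, the entries are ordinary $\tau$-twisted involutions in $H$, so coordinatewise twisted $H$-conjugation followed by $S_n$-conjugation on the fixed indices brings these values into the representatives $\varepsilon_1,\dots,\varepsilon_m$ grouped by type. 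The resulting element is some $\varepsilon_x$, and the tuple $x$ depends only on the twisted conjugacy class of $(h,\pi)$ because the cycle type of the $S_n$-component and the multiset of $\tau$-twisted $H$-classes of the fixed-point entries are manifest orbit invariants. The subtlety I expect to be the main obstacle is checking that the paired-block normalization really is realizable by a twisted conjugation inside $G_n$; this reduces to a short direct computation showing that twisted conjugation by elements of $H^n$ traces out the full space of $\delta^\tau_k(H)$-configurations on a $2$-cycle block of $\omega_k$. Once this is settled, steps (i)--(iii) are complete and the theorem follows.
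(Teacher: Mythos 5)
Your proposal is correct and follows essentially the same route as the paper: invoke Theorem \ref{bg-thm} to get $\epsilon_\tau(\psi)=1$, apply Theorem \ref{thm1} for the model property, and then verify by direct computation with the wreath-product multiplication that $G_x^\tau = C_{G_n,\tau}(\varepsilon_x)$ and that the $\varepsilon_x$ exhaust the twisted classes. In fact you supply more detail than the paper on the orbit-classification step (the block-by-block normalization, including the check that a $2$-cycle block can be trivialized by twisted conjugation with $k_i=1$, $k_j=h_i^{-1}$), which the paper explicitly leaves to the reader.
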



\begin{proof}
By Theorem \ref{bg-thm}, we have $\epsilon_\tau(\psi) = 1$ for all $\psi \in \Irr(H)$.  Since $\{\lambda_i\}_{i=1}^m$ is a model for $H$, it follows from Theorem \ref{thm1} that $\{ \phi_x^\tau \}_{x \in \sU_m(x)}$ is a model for $G_n$.  To show this model is a generalized involution model, we must prove both of the following:
\begin{enumerate}
\item[(1)] For each $x \in \sU_m(n)$, the group $G_x^\tau$ is the $\tau$-twisted centralizer in $G_n$ of $\varepsilon_x \in \cI_{G_n,\tau}$.

\item[(2)] The set 
$\{\varepsilon_x\}_{x \in \sU_m(n)}$ contains exactly one element from each orbit in $\cI_{G_n,\tau}$.

\end{enumerate}
To this end, fix $x \in \sU_m(n)$ and let $\varepsilon'_x \in H^n$ be the element with $\varepsilon_x = \( \varepsilon_x', \omega_{x_0}\)$.  Since $\varepsilon'_x \cdot  \leftexp{\tau} \varepsilon'_x = 1 \in H^n$ and $\omega_{x_0}(\varepsilon_x') = \varepsilon_x'$ by assumption, we have 
$\varepsilon_x \in \cI_{G_n,\tau}$.  

Next, let $\pi \in S_n$ and $h \in H^n$ and consider the twisted conjugation of $\varepsilon_x$ by the arbitrary element $g = \(\pi^{-1}(h),\pi\) \in G_n$.  This gives
\be \label{id1}(k,\sigma) \overset{\mathrm{def}}=g \cdot \varepsilon_x \cdot  \leftexp{\tau}{g}^{-1} =  \( \pi\omega_{x_0}\pi^{-1}(h) \cdot  \pi(\varepsilon'_x)\cdot  \leftexp{\tau}{h}^{-1},\pi \omega_{x_0}\pi^{-1}\).\ee  Hence $g \in C_{G_n,\tau}(\omega_x)$ only if $\pi \in C_{S_n}(\omega_{x_0})$.  Assume this, and define $J_0 = \{1,\dots,2x_0\}$ and $ J_i= \left\{ 2x_0+\(x_1+\dots +x_{i-1}\) + j : 1\leq j \leq x_i\right\}$ for $i=1,\dots,m$.
Then  $\pi$ permutes the sets $J_0$ and $J_1 \cup \dots \cup J_m$, so 
\be\label{identity} k_j = \left\{\ba &h_{j'}\cdot \leftexp{\tau}h_{j}^{-1}, &&\text{if } j \in J_0,\text{ where $j'=\omega_{x_0}(j)$}, \\ 
&h_j \cdot \varepsilon_{i} \cdot \leftexp{\tau}h_j^{-1}, && \text{otherwise, where $i$ is the unique index with $\pi^{-1}(j) \in J_i$}.
\ea \right.\ee  It follows from the first case in this identity that $k=\varepsilon'_x$ only if $h_{j'} = \leftexp{\tau} h_{j}$ for all $j \in J_0$.  It follows from the second case that if $j \in J_1\cup \dots \cup J_m$ then $k_j$ lies in the $H$-orbit of $\varepsilon_i$, where $i$ is the unique index with $\pi^{-1}(j) \in J_i$.  
Thus, $k = \varepsilon'_x$ only if  $\pi$ also permutes each of the sets $J_i$ and  $h_j \in C_{H,\tau}(\varepsilon_i) = H_i$ for all $j \in J_i$ and $i=1,\dots,m$.  Combining these observations, we see that $g \in C_{G_n,\tau}(\varepsilon_x)$ only if $g \in G_x^\tau$.  The reverse implication follows easily, and so we have $C_{G_n,\tau}(\varepsilon_x) = G_x^\tau$.  

It remains to show that the elements $\varepsilon_x$ for $x \in \sU_m(n)$ represent the distinct $\tau$-twisted conjugacy classes in $\cI_{G_n,\tau}$.  This requires a straightforward but tedious calculation, similar to the one in the previous paragraph.  We leave this to the reader.
\end{proof}



We conclude this section with an observation on how to construct a Gelfand model for $G_n$ from a generalized involution model for $H$.  To make our notation more concise, we adopt the following convention: given $g = (h,\pi) \in G_n$, define $|g| \in S_n$ and $z_g: \{1,\dots,n\} \to H$ by
\be\label{wreath-defs} |g| = \pi \in S_n\qquad\text{and}\qquad z_g(i) = h_{i} \in H.\ee 
We can identify $G_n$ with the set of $n\times n$ matrices which have exactly one nonzero entry in each row and column, and whose nonzero entries are elements of $H$.
 Viewing $g \in G_n$ as a matrix of this form, $|g|$ is the matrix given by replacing each nonzero entry of $g$ with $1$, and $z_g(i)$ is the value of the nonzero entry of the matrix $g$ in the $i$th column. 

In the following statement, it helps to recall the definition of $\sign_{S_n}$ from (\ref{sign_S_n}).  The symbol $\tau$ continues to denote a fixed automorphism of $H$ with $\tau^2=1$, which we have extended to an automorphism of $G_n$ by (\ref{extend}).  Also, $\KK$ here denotes a fixed subfield of $\CC$ and $\cV_{H,\tau}$, $\cV_{G,\tau}$ are the vector spaces over $\KK$ defined by (\ref{cV}).

\begin{proposition} \label{observation2} Suppose $\sign_H : H\times \cI_{H,\tau} \to \KK$ is a function such that the map $\rho : H\to \GL(\cV_{H,\tau})$ defined by
\[ \rho(h) C_\omega = \sign_H(h,\omega)\cdot C_{h\cdot\omega\cdot \leftexp{\tau}h^{-1}},\qquad\text{for }h\in H,\ \omega \in \cI_{H,\tau}\] is a Gelfand model for $H$.  Then the map $\rho_{n,H} : G_n \to \GL(\cV_{G,\tau})$ defined by
\[ \rho_{n,H}(g) C_{\omega} =\sign_{G_n}(g,\omega)  \cdot C_{g\cdot\omega \cdot \leftexp{\tau}g^{-1}},\qquad\text{for }g\in G_n,\ \omega \in \cI_{G_n,\tau},\] where
\[ \sign_{G_n}(g,\omega) =   
\sign_{S_n}(|g|,|\omega|) \prod_{i \in \Fix(|\omega|)} \sign_H(z_g(i), z_\omega(i))\] is a Gelfand model for $G_n = H \wr S_n$.
\end{proposition}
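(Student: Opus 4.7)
The plan is to invoke Lemma \ref{observation} in reverse: once we establish (i) that $\rho_{n,H}$ is a representation of $G_n$ and (ii) that the restricted functions $\sign_{G_n}(\cdot, \varepsilon_x) : G_x^\tau \to \KK$ form a generalized involution model for $G_n$ with respect to $\tau$, Lemma \ref{observation} will yield that $\rho_{n,H}$ is a Gelfand model. The orbit representatives $\{\varepsilon_x\}_{x \in \sU_m(n)}$ to be used are those furnished by Theorem \ref{main-thm}, applied to the generalized involution model $\{\lambda_i : H_i \to \KK\}_{i=1}^m$ of $H$ obtained from $\rho$ via Lemma \ref{observation} in the forward direction.

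The first step would be to verify the cocycle identity
\[ \sign_{G_n}(g_1 g_2, \omega) = \sign_{G_n}(g_1,\ g_2 \cdot \omega \cdot \leftexp{\tau}g_2^{-1}) \cdot \sign_{G_n}(g_2, \omega). \]
Writing $\omega' = g_2 \omega \leftexp{\tau}g_2^{-1}$, one checks that $|g_1 g_2| = |g_1|\cdot|g_2|$ and $|\omega'| = |g_2|\cdot|\omega|\cdot|g_2|^{-1}$, so the $\sign_{S_n}$ factor obeys the analogous cocycle by the argument already given in the proof that $\rho_n$ is a representation of $S_n$. For the product of $\sign_H$ factors, a direct unpacking of the semidirect product multiplication in (\ref{wreath-defs}) yields $z_{g_1 g_2}(i) = z_{g_1}(|g_2|(i)) \cdot z_{g_2}(i)$, and, for each $i \in \Fix(|\omega|)$ with $i' = |g_2|(i) \in \Fix(|\omega'|)$, the formula $z_{\omega'}(i') = z_{g_2}(i) \cdot z_\omega(i) \cdot \leftexp{\tau}z_{g_2}(i)^{-1}$. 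Applying the $\sign_H$ cocycle (which holds because $\rho$ is a representation of $H$) pointwise and reindexing $\Fix(|\omega'|)$ via $i \mapsto |g_2|(i)$ produces the required factorization of the $H$-part.

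Next I would check that for each $x \in \sU_m(n)$, the restriction of $\sign_{G_n}(\cdot, \varepsilon_x)$ to $C_{G_n,\tau}(\varepsilon_x) = G_x^\tau$ coincides with the linear character $\phi_x^\tau$ of (\ref{grp-rep-defs}). Write $J_0 = \{1,\dots,2x_0\}$ and $J_i = \{2x_0 + x_1+\cdots+x_{i-1}+1,\dots, 2x_0 + x_1+\cdots+x_i\}$ for $i \geq 1$, so that $|\varepsilon_x| = \omega_{x_0}$, $\Fix(\omega_{x_0}) = J_1 \cup \cdots \cup J_m$, and $z_{\varepsilon_x}(j) = \varepsilon_i$ whenever $j \in J_i$. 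For $g \in G_x^\tau$, the permutation $|g|$ centralizes $\omega_{x_0}$, so Corollary \ref{intuitive-def} gives $\sign_{S_n}(|g|, \omega_{x_0}) = \sgn(|g|_{J_0})$, which is precisely the value of $\sigma_{x_0}^\tau$ on the $V_{x_0}^\tau$-component of $g$. For $j \in J_i$ with $i \geq 1$, Lemma \ref{observation} applied to $\rho$ identifies $\sign_H(z_g(j), \varepsilon_i) = \lambda_i(z_g(j))$; since $\lambda_i$ is a linear character, $\prod_{j \in J_i} \lambda_i(z_g(j))$ equals the value of $\lambda_i \wr (x_i)$ on the $H_i \wr S_{x_i}$-component of $g$. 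Multiplying these contributions together recovers $\phi_x^\tau(g)$.

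Theorem \ref{main-thm} then guarantees that $\{\phi_x^\tau\}_{x \in \sU_m(n)}$ is a generalized involution model for $G_n$ with orbit representatives $\{\varepsilon_x\}$. Combined with the identification just made, this verifies condition (2) of Lemma \ref{observation} for $\rho_{n,H}$, so condition (1) yields that $\rho_{n,H}$ is a Gelfand model for $G_n$. The main obstacle is the cocycle verification in the first step, as it requires that the interplay between twisted conjugation on $H$-entries and the permutation action in $S_n$ line up exactly across the factorization $\sign_{G_n} = \sign_{S_n} \cdot \prod \sign_H$; the remaining work is essentially formal translation between the two equivalent formulations provided by Lemma \ref{observation} and Theorem \ref{main-thm}.
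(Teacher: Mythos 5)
Your proposal is correct and follows essentially the same route as the paper's proof: verify the cocycle identity for $\sign_{G_n}$ by splitting it into the $\sign_{S_n}$ part and the pointwise $\sign_H$ parts (using $z_{gh}(i) = z_g(|h|(i))\cdot z_h(i)$ and $z_{\omega'}(|h|(i)) = z_h(i)\cdot z_\omega(i)\cdot \leftexp{\tau}z_h(i)^{-1}$), and then check that $\sign_{G_n}(\cdot,\varepsilon_x)$ restricted to $G_x^\tau$ recovers $\phi_x^\tau$, so that Theorem \ref{main-thm} and Lemma \ref{observation} finish the argument. No gaps.
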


\begin{proof}
By Lemma \ref{observation}, $H$ possesses a generalized involution model $\{\lambda_i :H_i \to \KK\}_{i=1}^m$ with respect to $\tau$.  Retaining the notation of Theorem \ref{main-thm}, we may assume without loss of generality that $\lambda_i(h) = \sign_H(h, \varepsilon_i)$ for all $h \in C_{H,\tau}(\varepsilon_i) = H_i$ for each $i=1,\dots,m$.  To prove that $\rho_{n,H}$ is a Gelfand model, it suffices by Lemma \ref{observation} to show only two things: that $\phi_x^\tau(g) = \sign_{G_n}(g,\varepsilon_x)$ for all $g \in G_x^\tau$ for each $x \in \sU_m(n)$, and that $\rho_{n,H}$ is a representation.

To this end, fix $x \in \sU_m(n)$ and consider $g \in  H_i \wr S_{x_i} $.  Since $\lambda_i$ is a linear character, we have by Lemma \ref{char-vals} that
\[  (\lambda_i \wr (x_i))(g) ={\displaystyle\prod_{j=1}^{x_i} } \lambda_i(z_g(j)) 
= {\displaystyle\prod_{j=1}^{x_i} } \sign_H(z_g(j), \varepsilon_i).
\] Thus if $g = (g_0,g_1,\dots,g_m) \in G_x^\tau$, where $g_0 \in V_{x_0}^\tau$ and $g_i \in H_i \wr S_{x_i} $ for $i=1,\dots,m$, then
\[\ba  \phi_x^\tau(g)& = \sigma_{x_0}^\tau(g_0) \prod_{i=1}^m  (\lambda_i \wr x_i)(g_i)
\\
&
=
 \sign_{S_n}(|g|,|\varepsilon_x|) \prod_{i\in \Fix(\omega_x)} \sign_H(z_g(i), z_{\omega_x}(i)) = \sign_{G_n}(g,\varepsilon_x).\ea\]

It remains to show that
$\rho_{n,A}$ is a representation. 
Let $g,h\in G_n$ and $\omega \in \cI_{G_n,\tau}$ and write $\omega' = h\cdot \omega\cdot \leftexp{\tau}h^{-1}$.  First, by Lemma \ref{APR} we have
\be\label{tag1}  \sign_{S_n}(|g|, |\omega'|) \cdot \sign_{S_n}(|h|, |\omega|) = \sign_{S_n}(|gh|, |\omega|).\ee 
Now let $\pi = |h|$.  Choose $i \in \Fix(|\omega|)$ and observe that $\pi(i) \in \Fix(|\omega'|)$.  It follows from the fact that $\omega\cdot\leftexp{\tau}\omega = \omega' \cdot\leftexp{\tau}\omega' = 1$ that both $z_\omega(i)$ and $z_{\omega'}\circ\pi(i)$ belong to $\cI_{H,\tau}$.  Furthermore, one can check that
\[ z_g\circ\pi(i) \cdot z_h(i) = z_{gh}(i)\qquad\text{and}\qquad z_{\omega'}\circ \pi(i) =  z_h(i) \cdot z_\omega(i) \cdot \leftexp{\tau}z_h(i)^{-1}
.\]  Since $\sign_H(a,b\cdot x\cdot \leftexp{\tau}b^{-1})\cdot\sign_H(b,x)= \sign_H(ab,x)$ for $a,b \in H$ and $x \in \cI_{H,\tau}$, it follows that
 \be\label{tag2} 
 \ba \sign_H(z_g\circ \pi(i), z_{\omega'}\circ \pi(i)) \cdot \sign_H(z_h (i), z_{\omega}(i)) 
 = \sign_H(z_{gh}(i), z_{\omega}(i)).\ea\ee   Since $\Fix(|\omega'|) = \{ \pi(i) : i \in \Fix(|\omega|)\}$, combining the identities (\ref{tag1}) and (\ref{tag2}) shows that $\sign_{G_n}(g,\omega') \cdot \sign_{G_n}(h,\omega) = \sign_{G_n}(gh,\omega)$, which suffices to show that 
 $\rho_{n,H}$ is a representation, and therefore a Gelfand model.
\end{proof}

\section{Applications}\label{application}

As an application of Theorem \ref{main-thm}, we construct in this section a generalized involution model and a Gelfand model for $G_n = H\wr S_n$ when $H$ is abelian.  This gives a simple proof of Theorem 1.2 in \cite{APR2008}, which asserts that the representation $\rho_{r,n}$ from the introduction is a Gelfand model for $G_n$ in the special case that $H$ is the cyclic group of order $r$.  Using Theorem \ref{thm1}, we prove some facts concerning the decomposition of this representation into irreducible constituents, and in so doing prove a conjecture of Adin, Postnikov, and Roichman from \cite{APR2008}. 


Throughout this section, let $A$ be a finite abelian group and let $\tau \in \Aut(A)$ be the automorphism defined by $\leftexp{\tau} a = a^{-1}$.  For this particular case, we note that 
\[ \ba &\cI_A \overset{\mathrm{}}= \{ a \in A : a^2 = 1\}, \\ 
&\cI_{A,\tau}  \overset{\mathrm{}}= \{ a \in A : a \cdot \leftexp{\tau}a  = 1\} = A, 
\ea 
\qquad
\ba &C_{A}(a)  \overset{\mathrm{}} = \{b \in A: bab^{-1} = a\} = A,\\
&C_{A,\tau}(a)   \overset{\mathrm{}} = \{b \in A: b\cdot a\cdot \leftexp{\tau}b^{-1} = a\} = \cI_A.\ea 
\]
The automorphism $\tau$ gives rise to the following generalized involution model for $A$.


\begin{lemma}\label{abelian-model-lem}
If $A$ is abelian, then the set $\Irr(\cI_A)$ of all irreducible characters of the subgroup $\cI_A = \{ a \in A : a^2=1\}$  forms a generalized involution model for $A$ with respect to the automorphism $\tau : a\mapsto a^{-1}$.  In particular, for each $\lambda \in \Irr(\cI_A)$, the induced character
$ \Ind_{\cI_A}^{A} (\lambda)$ is the sum of all $\psi \in \Irr(A)$ with $\Res_{\cI_A}^{A}(\psi) = \lambda$.
\end{lemma}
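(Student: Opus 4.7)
The plan is to verify directly the defining conditions of a generalized involution model, which for abelian $A$ reduce to elementary statements about restriction and induction of one-dimensional characters. The three things to check are: that each $\lambda \in \Irr(\cI_A)$ is a linear character of the twisted centralizer of some generalized involution; that we can pick orbit representatives so the assignment is injective and hits every twisted conjugacy class; and that the resulting sum of induced characters exhausts $\Irr(A)$ without multiplicity.

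First I would identify the relevant data. Since $A$ is abelian and $\tau(a)=a^{-1}$, the twisted involution set is $\cI_{A,\tau}=A$, and the twisted conjugation action is $b:a\mapsto b\cdot a\cdot b$. Hence the stabilizer of any $a\in A$ is $C_{A,\tau}(a)=\{b\in A:b^2=1\}=\cI_A$, independent of $a$, and the orbit of $a$ is the coset $A^2\cdot a$ of the squaring image $A^2=\{b^2:b\in A\}$. Because the squaring map $A\to A^2$ has kernel $\cI_A$, we obtain $[A:A^2]=|\cI_A|=|\Irr(\cI_A)|$, so the number of twisted conjugacy classes equals the number of characters available to assign to them; any injection $\iota:\Irr(\cI_A)\to A$ choosing one representative from each coset of $A^2$ then satisfies conditions (a) and (b) in the definition of a generalized involution model.

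Next I would handle the ``in particular'' statement via Frobenius reciprocity. Every $\psi\in\Irr(A)$ is one-dimensional since $A$ is abelian, so $\Res_{\cI_A}^A(\psi)$ is itself an element of $\Irr(\cI_A)$, and
\[
\left\langle \Ind_{\cI_A}^A(\lambda),\psi\right\rangle_A
=\left\langle \lambda,\Res_{\cI_A}^A(\psi)\right\rangle_{\cI_A}
=\begin{cases}1,&\Res_{\cI_A}^A(\psi)=\lambda,\\ 0,&\text{otherwise.}\end{cases}
\]
Thus $\Ind_{\cI_A}^A(\lambda)$ is the multiplicity-free sum of those $\psi\in\Irr(A)$ whose restriction to $\cI_A$ is $\lambda$, proving the last sentence of the lemma. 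Summing over $\lambda\in\Irr(\cI_A)$ and using that the restriction map $\Irr(A)\to\Irr(\cI_A)$ partitions $\Irr(A)$ by its fibres shows that $\sum_\lambda\Ind_{\cI_A}^A(\lambda)$ is the multiplicity-free sum of all of $\Irr(A)$, as required.

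There is essentially no obstacle here: the entire argument is forced once one observes that $C_{A,\tau}(a)=\cI_A$ for every $a$ and that restriction to $\cI_A$ respects one-dimensionality. A degree check provides a convenient sanity verification, since $\sum_{\lambda}\deg\Ind_{\cI_A}^A(\lambda)=|\Irr(\cI_A)|\cdot[A:\cI_A]=|A|=\sum_{\psi\in\Irr(A)}\deg\psi$, matching the multiplicity-free decomposition.
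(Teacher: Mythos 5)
Your proposal is correct and follows essentially the same route as the paper's proof: identify $\cI_{A,\tau}=A$ and $C_{A,\tau}(a)=\cI_A$ for all $a$, count that the number of twisted conjugacy classes equals $|\cI_A|=|\Irr(\cI_A)|$, and then apply Frobenius reciprocity together with the fact that every $\psi\in\Irr(A)$ restricts to an element of $\Irr(\cI_A)$. The only difference is that you spell out the orbit structure via the cosets of $A^2$ and add a degree check, which the paper leaves implicit.
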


\begin{remark} This generalized involution model is clearly unique, up to the arbitrary assignment of irreducible representations of $\cI_A$ to orbits in $\cI_{A,\tau}$, since the degree of any Gelfand model for $A$ is $|A|$ and so we must have $\cI_{A,\tau} = A$. 
\end{remark}

\begin{proof}
Since $\cI_{A,\tau} = A$ and $\cI_A = C_{A,\tau}(a)$ for every $a \in A$, there are $|\cI_A|$ distinct twisted conjugacy classes in $\cI_{A,\tau}$ and so
each irreducible character of $\cI_A$ can be viewed as  a linear character of the $\tau$-twisted centralizer of a representative of a distinct orbit in $\cI_{A,\tau}$.  The claimed decomposition of $ \Ind_{\cI_A}^{A} (\lambda)$ is immediate by Frobenius reciprocity, and since each element of $\Irr(A)$ restricts to an element of $\Irr(\cI_A)$, our assertion follows. %
%
%
\end{proof}

\def\sorb{s_{\mathrm{orb}}}
\def\sinv{s_{}}

Seeing this result, we naturally want to use Proposition \ref{observation2} to obtain a Gelfand model for the wreath product $A\wr S_n$.  In order to do this, we must first define a function $\sign_A : A\times A \to \CC$ which corresponds to the generalized involution model for $A$ just described.
We will define this function in two different ways: first from a completely abstract standpoint which does depend on the structure of $A$, and then with an explicit construction which relies on a given decomposition of $A$ as a direct product of cyclic groups.

For our first definition, we must introduce a few pieces of notation to keep track of our arbitrary but unspecified sets of orbit representatives.
Let $B = \{ a^2 : a \in A\}$ and observe that the cosets of this subgroup in $A$ are precisely the  orbits in $\cI_{A,\tau}$ under the twisted conjugacy action $a: x \mapsto a\cdot x\cdot \leftexp{\tau}a^{-1} = a^2x$.  Fix a bijection between $A/B$ and  $\Irr(\cI_A)$, and for each $x \in A$, let $\lambda_x : \cI_A \to \CC$ denote the linear character corresponding to the orbit $x B$.
Now choose two maps 
\[ \wt s_{\mathrm{orb}} : A/B \to A\qquad\text{and}\qquad \wt\sinv : A/\cI_A \to A\]
assigning representatives to the cosets of $B$ and $\cI_A$  in $A$, and let
\[ \sorb(a) = \wt s_{\mathrm{orb}}(a B)\qquad\text{and}\qquad    \sinv(a) = \wt\sinv(a\cI_A),\qquad\text{for }a\in A.\]  
The image of $\sorb$ is then a set of orbit representatives in $A$, which explains our notation.
Our next definition is our most complicated: let  $q : A \to A$ be the map
\[ q(a)  = \wt\sinv\(\left\{ b \in A :  \sorb(a)\cdot b^2= a\right\}\),\qquad\text{for }a\in A,\]
 The set $\left\{ b \in A :   \sorb(a)\cdot b^2= a\right\}$ is a coset of $\cI_A$ in $A$ and so the map $q$ is well-defined.  We can think of the value of $q(a)$ as the square root of $a$ modulo $B$. In the case that $A$ is cyclic, $q$ has a much more direct formula which we will compute.  

We now define $\sign_A : A\times A\to \CC$ as the function 
\be\label{sign} \sign_A(a,x) = \lambda_x\(a \cdot q(x) \cdot \sinv\(a\cdot q(x)\)^{-1}\) \ee and let $\rho_A : A \to \GL(\cV_{A,\tau})$  be the map given by
\be \rho_A(a) C_x =\sign_A(a,x) \cdot C_{a^2x},\qquad\text{for }a,x \in A.\ee 
%
These definitions come with the following result.

\begin{proposition}\label{abelian-model}
The map $\rho_A$ defines a Gelfand model for the abelian group $A$.
\end{proposition}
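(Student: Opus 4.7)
The plan is to apply Lemma \ref{observation} to the map $\rho_A$, which reduces the proposition to proving two things: (a) that $\rho_A$ is actually a representation of $A$, and (b) that the restrictions $\sign_A(\cdot, \omega)|_{\cI_A}$, as $\omega$ ranges over a set of orbit representatives of the $\tau$-twisted conjugation action on $\cI_{A,\tau} = A$, coincide with the generalized involution model for $A$ guaranteed by Lemma \ref{abelian-model-lem}.

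I would handle (b) first, by direct substitution. If $\omega$ is an orbit representative, meaning $\omega = \sorb(\omega)$, then the defining set $\{d \in A : \sorb(\omega) d^2 = \omega\}$ equals $\cI_A$, so $q(\omega) = \wt\sinv(\cI_A) \in \cI_A$. For $a \in \cI_A$ the product $aq(\omega)$ again lies in $\cI_A$, giving $\sinv(aq(\omega)) = \wt\sinv(\cI_A) = q(\omega)$. Plugging this into (\ref{sign}) yields $\sign_A(a,\omega) = \lambda_\omega(a\cdot q(\omega)\cdot q(\omega)^{-1}) = \lambda_\omega(a)$, which is exactly the character of $\cI_A$ attached to the orbit $\omega B$ in Lemma \ref{abelian-model-lem}.

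The main task is (a), which amounts to verifying the cocycle identity $\sign_A(ab,x) = \sign_A(b,x)\cdot \sign_A(a,b^2 x)$ for all $a,b,x \in A$. The key lemma I would establish first is the identity $q(b^2 x) = \sinv(bq(x))$: both sides equal $\wt\sinv(bq(x)\cI_A)$, because $\sorb(b^2 x) = \sorb(x)$ and $\sorb(x)\cdot(bq(x))^2 = b^2 x$, so that $bq(x)$ belongs to the coset that defines $q(b^2 x)$. Using also that $\lambda_{b^2 x} = \lambda_x$ (same $B$-coset) and that $\sinv$ depends only on the $\cI_A$-coset of its argument (so $\sinv(a\,\sinv(bq(x))) = \sinv(abq(x))$), the product telescopes:
\[\ba
\sign_A(b,x)\,\sign_A(a,b^2 x)
&= \lambda_x\bigl(bq(x)\,\sinv(bq(x))^{-1} \cdot a\, q(b^2 x)\,\sinv(aq(b^2 x))^{-1}\bigr) \\
&= \lambda_x\bigl(abq(x)\cdot \sinv(a\,\sinv(bq(x)))^{-1}\bigr) \\
&= \lambda_x\bigl(abq(x)\,\sinv(abq(x))^{-1}\bigr) = \sign_A(ab,x).
\ea\]

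The principal obstacle is not any deep idea but rather the delicate bookkeeping needed to keep track of the three auxiliary maps $\wt s_{\mathrm{orb}},\wt\sinv,q$; once the key identity $q(b^2 x) = \sinv(bq(x))$ and the coset-invariance of $\sinv$ are in hand, both (a) and (b) reduce to short manipulations, and Lemma \ref{observation} then delivers the conclusion that $\rho_A$ is a Gelfand model.
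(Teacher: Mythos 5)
Your proposal is correct and follows essentially the same route as the paper: it reduces the claim via Lemma \ref{observation}, checks that $\sign_A(\cdot,\omega)$ restricts to $\lambda_\omega$ on $\cI_A$, and verifies the cocycle identity using exactly the paper's key identity $q(b^2x)=\sinv(b\cdot q(x))$ together with the $\cI_A$-coset invariance of $\sinv$.
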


\begin{proof}
If $a \in \cI_A$, then $\sinv(a\cdot q(x)) =  \sinv(q(x))=q(x)$ and so $\sign_A(a,x) =\lambda_x(a)$.  Therefore, by Lemma  \ref{observation} and the preceding lemma, it suffices to show that $\rho_A$ is a representation.  For this, fix $a,b,x \in A$ and observe that $q(b^2x) = \sinv(b \cdot q(x))$ since
\[  \sorb(x) \cdot (b\cdot q(x))^2 =b^2 \cdot  \sorb(x)\cdot q(x)^2 = b^2x.\]
In addition, since $\sinv(c) \cI_A = c \cI_A$ for all $c \in A$, we have $\sinv\( a\cdot \sinv(b\cdot q(x))\) = \sinv\( ab\cdot q(x)\)$. 
Thus, since  $\lambda_x =\lambda_{b^2x}$ by construction, $ \sign_A(a,b^2x) =  \lambda_x\(a \cdot \sinv(b\cdot q(x)) \cdot \sinv\(ab\cdot q(x)\)^{-1}\)$
 and so $\sign_A(b,x) \cdot \sign_A(a,b^2x) = \sign_A(ab,x)$, which suffices to show that 
$\rho_A$ is a representation.
\end{proof}

Using this abstract formulation, we can provide a concrete definition of $\sign_A$ using the structure of $A$ as a finite abelian group.   For any two integers $a\leq b$, let $[a,b] = \{ i \in \ZZ : a\leq i\leq b\}$.  Identify  the cyclic group $\ZZ_r$ with the set $[0,r-1]$ so that the group operation is addition modulo $r$, and define a function  $\sign_r : \ZZ_r \times \ZZ_r \to \{\pm 1\}$ by
\[
\sign_{r}(a,x) = \left\{\barr{rl} -1,&\text{if $r$ is even and there exists $k \in [0,r/2-1]$} \\ &\text{with $x = 2k+1$ and $a+k \in[r/2,r-1]$,} \\ \\ 1,&\text{otherwise},\earr\right.\qquad\text{for }a,b \in \ZZ_r.
\]
If $A =\prod_{i=1}^k \ZZ_{r_i}$ where each $r_i$ is a prime power, then we define $\sign_A: A\times A\to \{\pm1\}$ by 
\be\label{general}\sign_A(a,x) = \prod_{i=1}^k \sign_{r_i}(a_i,x_i),\qquad\text{for }a=(a_1,\dots,a_k)\in A \text{, }x = (x_1,\dots,x_k) \in A.\ee
 Every finite abelian group is isomorphic to a direct product of this form which is unique up to rearrangement of factors, so the formula (\ref{general}) is well-defined for all abelian groups.  The definition (\ref{general}) is just a special case of (\ref{sign}), which explains the following corollary.
 
 \begin{corollary} \label{abelian-model-cor}
 If $A$ is abelian then the map $\rho_A$ with $\sign_A$ defined by (\ref{general}) is a Gelfand model.
\end{corollary}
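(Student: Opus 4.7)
The plan is to deduce this from Proposition~\ref{abelian-model} by showing that the explicit formula~(\ref{general}) arises as a special case of the abstract definition~(\ref{sign}) for appropriate choices of the bijection $A/B \to \Irr(\cI_A)$ and the section maps $\wt s_{\mathrm{orb}}$ and $\wt\sinv$. Since Proposition~\ref{abelian-model} guarantees that $\rho_A$ is a Gelfand model for any such choice, the corollary will follow immediately once this identification is made.

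First, I would establish that the abstract construction is multiplicative under direct products of finite abelian groups. If $A = A^{(1)} \times A^{(2)}$, then the subgroups $B$ and $\cI_A$, the sections $\sorb$ and $\sinv$, and the map $q$ all split componentwise. Choosing a bijection $A/B \to \Irr(\cI_A)$ that respects the product decomposition makes the characters factor as external tensor products $\lambda_{(x_1,x_2)} = \lambda_{x_1} \odot \lambda_{x_2}$, from which the abstract $\sign_A$ factors as a product of the corresponding functions on the two factors. Since every finite abelian group decomposes uniquely (up to reordering) as a direct product of cyclic groups of prime power order, induction reduces the problem to the cyclic case $A = \ZZ_r$ with $r$ a prime power.

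For the cyclic case, the verification splits into two subcases according to the parity of $r$. When $r$ is odd, $\cI_A = \{0\}$ and $B = A$, so only the trivial character of $\cI_A$ appears and the abstract $\sign_A$ is identically $1$; this matches $\sign_r$ by definition. When $r = 2^k$ is a power of two, I would take $\wt s_{\mathrm{orb}}$ to send the even and odd cosets of $B = 2\ZZ_r$ to $0$ and $1$ respectively, take $\wt\sinv$ to send each coset of $\cI_A = \{0, r/2\}$ to its unique representative in $[0, r/2 - 1]$, and send the even coset of $B$ to the trivial character of $\cI_A$ and the odd coset to the sign character. A short calculation then yields $q(x) = \lfloor x/2 \rfloor$, and for odd $x = 2k+1$ this reduces the abstract formula~(\ref{sign}) to the piecewise definition of $\sign_r(a,x)$.

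The main obstacle I anticipate is aligning the bookkeeping at the end of the cyclic calculation: one has to confirm that the literal integer inequality $a + k \in [r/2, r-1]$ appearing in~(\ref{general}) agrees with the modular condition $(a + k) \bmod r \in [r/2, r-1]$ produced by the abstract definition. This identification holds because $a \in [0, r-1]$ and $k \in [0, r/2 - 1]$ force $a + k \leq 3r/2 - 2$, which rules out the second possible pre-image of $[r/2, r-1]$ under reduction mod $r$. Once this bookkeeping is checked, the corollary follows from Proposition~\ref{abelian-model}.
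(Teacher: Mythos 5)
Your proposal is correct and takes essentially the same route as the paper: reduce to the cyclic case (the paper states this reduction without spelling out the componentwise factorization you describe), then match the abstract definition (\ref{sign}) with $\sign_r$ via the same choices of sections, the same orbit-to-character assignment, and the computation $q(a)=\lfloor a/2\rfloor$. The only difference is one of detail --- the paper concludes ``by inspection'' where you make the mod-$r$ bookkeeping for $a+k$ explicit.
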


\begin{proof}
It suffices to prove this when $A = \ZZ_r$ is cyclic, for this we only need to show that $\sign_A = \sign_r$ for some choice of the sections  $s_{\mathrm{orb}}$ and $s$ and of the arbitrary correspondence between orbits in $\cI_{A,\tau}$ and irreducible representations of $\cI_A$.   
If $r$ is odd then this always happens since $\cI_A = \{1\}$ so $\sign_A(a,x) = \sign_r(a,x) = 1$ for all $a,x \in A$.  Suppose $r$ is even.  
Then $\cI_A = \{0, r/2\}$; the cosets $A/\cI_A$ are $[0,r/2-1]$ and $[r/2,r-1]$; and the two orbits in $\cI_{A,\tau} =A$ are given by the sets of odd and even integers in $[0,r-1]$.  Assign the trivial representation of $\cI_A$ to the even orbit and the nontrivial representation to the odd orbit, so that the notation $\lambda_x : \cI_A \to \CC$ becomes
\[ \lambda_x(0) = 1\quad\text{and}\quad \lambda_x(r/2) = \left\{\barr{rl} 1, & \text{if $x$ is even}, \\ -1, &\text{if $x$ is odd},\earr\right.\qquad\text{for }x \in A.\]
If we define the sections $\sorb$ and $\sinv$ by 
\[ \sorb(a) = \left\{\barr{ll} 0,&\text{if $a$ is even}, \\ 1,&\text{if $a$ is odd},\earr\right.
\quad
\text{and}
\quad
\sinv(a) = \left\{ \barr{ll} a, & \text{if }a \in[0,r/2-1], \\ a-r/2,&\text{if }a \in [r/2,r-1],\earr\right.\] then the function $q: A\to A$ is given by the simple formula $q(a) = \lfloor a/2\rfloor$ for $a\in A$, where the floor function takes its usual meaning for integers.  It now follows by inspection that with respect to these choices, the definition  (\ref{sign})  of $\sign_A$  matches $\sign_r$ as required.
\end{proof}

We are now in a position to apply Proposition \ref{observation2} to obtain a Gelfand model for the wreath product $G_n = A \wr S_n$.  In particular, extend $\tau$ to an automorphism $\tau \in \Aut(G_n)$ by $\leftexp{\tau}(a,\pi) = (a^{-1},\pi)$, and define a map $\rho_{n,A} : G_n \to \GL(\cV_{G_n,\tau})$ by
\[ \rho_{n,A}(g) C_{\omega} =\sign_{G_n}(g,\omega)  \cdot C_{g\cdot\omega \cdot \leftexp{\tau}g^{-1}},\qquad\text{for }g\in G_n,\ \omega \in \cI_{G_n,\tau},\] where
\[ \sign_{G_n}(g,\omega) =   
\sign_{S_n}(|g|,|\omega|) \prod_{i \in \Fix(|\omega|)} \sign_A(z_g(i), z_\omega(i)).\] Here $\sign_{S_n}$ is given by (\ref{sign_S_n}) and $\sign_A$ is given by either (\ref{sign}) or (\ref{general}).  
The following theorem is now immediate from Proposition \ref{observation2} and the preceding two results.

\begin{theorem}
The map $\rho_{n,A}$ defines a Gelfand model for $G_n = A\wr S_n$ when $A$ is abelian.
\end{theorem}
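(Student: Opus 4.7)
The plan is to simply combine the two pieces of machinery developed in the preceding pages: Proposition \ref{observation2}, which lifts a Gelfand model for $H$ (arising from a generalized involution model via a sign function) to a Gelfand model for $H \wr S_n$, together with Corollary \ref{abelian-model-cor}, which supplies exactly such a Gelfand model in the abelian case. Since the statement of the theorem is essentially a transcription of Proposition \ref{observation2} with $H = A$, very little work remains once the correct identifications are made.

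First, I would invoke Corollary \ref{abelian-model-cor} to see that $\rho_A : A \to \GL(\cV_{A,\tau})$, defined by
\[
\rho_A(a) C_x = \sign_A(a,x) \cdot C_{a\cdot x \cdot \leftexp{\tau}a^{-1}} = \sign_A(a,x) \cdot C_{a^2 x}, \qquad\text{for }a,x \in A,
\]
is a Gelfand model for $A$, where $\sign_A$ is given by the explicit product formula (\ref{general}). This is the input hypothesis needed by Proposition \ref{observation2}. Note that this step relies on the automorphism $\tau : a \mapsto a^{-1}$ of $A$ squaring to the identity (immediate since $A$ is abelian) and on Lemma \ref{abelian-model-lem}, which has already been used to establish Proposition \ref{abelian-model}.

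Next, I would extend $\tau$ to an automorphism of $G_n = A \wr S_n$ via (\ref{extend}); that is, $\leftexp{\tau}(a,\pi) = (a^{-1},\pi)$ for $a \in A^n$ and $\pi \in S_n$. Applying Proposition \ref{observation2} with $H = A$, I obtain a Gelfand model $\rho_{n,A} : G_n \to \GL(\cV_{G_n,\tau})$ defined by
\[
\rho_{n,A}(g) C_\omega = \sign_{G_n}(g,\omega) \cdot C_{g \cdot \omega \cdot \leftexp{\tau}g^{-1}},
\]
where
\[
\sign_{G_n}(g,\omega) = \sign_{S_n}(|g|,|\omega|) \prod_{i \in \Fix(|\omega|)} \sign_A(z_g(i), z_\omega(i)).
\]
This is exactly the map in the theorem statement, so the conclusion follows.

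There is essentially no obstacle here, since every ingredient has been prepared in the preceding paragraphs; the only thing to double-check is that the conventions in the theorem statement and in Proposition \ref{observation2} align, in particular that the $\sign_A$ built from (\ref{general}) is the same function used to define $\rho_A$ in Corollary \ref{abelian-model-cor}. This is true by definition. Thus the theorem is a direct corollary of Proposition \ref{observation2} and Corollary \ref{abelian-model-cor}, with no further verification required.
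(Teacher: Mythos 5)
Your proposal is correct and matches the paper's own argument exactly: the theorem is stated in the paper as an immediate consequence of Proposition \ref{observation2} applied to the Gelfand model for the abelian group $A$ furnished by Proposition \ref{abelian-model} and Corollary \ref{abelian-model-cor}. No further comment is needed.
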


By restating this theorem in slightly greater detail in the special case that $A$ is cyclic, we can explain the formula (\ref{intro}) from the introduction and provide an alternate proof of Theorem 1.2 in \cite{APR2008}.  For this, we view $\ZZ_r$ as the additive group of integers $[0,r-1]$, so that 
\be\label{mult-ref} (a,\pi)(b,\sigma) = (\sigma^{-1}(a) + b, \pi \sigma),\qquad\text{for }(a,\pi), (b,\sigma) \in \ZZ_r \wr S_n.\ee  We let $(a,\pi)^T = (-a,\pi)^{-1} = \(\pi(a),\pi^{-1}\)$ for $(a,\pi) \in \ZZ_r \wr S_n$ and define
\[\cV_{r,n} = \QQ\spanning\left\{ C_\omega : \omega \in \ZZ_r \wr S_n,\ \omega^T = \omega\right\}.\] Observe that $g^T = \leftexp{\tau} g^{-1}$ for $g \in \ZZ_r \wr S_n$, where $\tau$ is the automorphism $\leftexp{\tau}(a,\pi) = (-a,\pi)$.  Therefore $\cV_{r,n} = \cV_{G, \tau}$ with $G=\ZZ_r \wr S_n$ in our earlier notation.  Also, if we view elements of the wreath product $\ZZ_r \wr S_n$ as generalized permutation matrices, then $g^T$ is to the usual matrix transpose of $g$. As element $g \in \ZZ_r \wr S_n$ is \emph{symmetric} or an \emph{absolute involution} if $g^T = g$. 

Recall the definition of $|g|$ and $z_g$ for $g \in \ZZ_r \wr S_n$ from (\ref{wreath-defs}).  
The following notation comes from Definitions 6.1 and 6.3 in \cite{APR2008}.   For $g,\omega \in \ZZ_r \wr S_n$, let $B(g,\omega)$ denote the subset of $\{1,\dots,n\}$ given by 
\[ B(g,\omega) =\left\{\barr{ll} \varnothing, &\text{if $r$ is odd}, \\  \\
\left\{ i \in \Fix(|\omega|) 
: \ba 
&z_\omega(i) \text{ is odd and } z_g(i)+k\in[r/2,r-1] \\
& \text{for the $k \in [0,r/2-1]$ with $2k+1=z_\omega(i)$}\ea
\right\}, &\text{if $r$ is even.}\earr\right.\] 
Next define
\[ \sign_{r,n}(g,\omega) = (-1)^{|B(g,\omega)|}\cdot (-1)^{|\Inv(|g|) \cap \Pair(|\omega|)|}\]
and let  $\rho_{r,n} : \ZZ_r \wr S_n \to \GL(\cV_{r,n})$ be the map given by
\[ \rho_{r,n}(g) C_\omega ={\sign_{r,n}(g, \omega)}_{} \cdot C_{g \omega g^T},\qquad\text{for }g,\omega \in \ZZ_r \wr S_n \text{ with }\omega^T = \omega.\] 
The map $\rho_{r,n}$ is precisely the representation $\rho_{n,A}$ above with $A = \ZZ_r$ and $\sign_A = \sign_{r}$, and one can check that our definition of $\sign_{r,n}$ agrees with the one given on generators in the introduction.  We thus  obtain the following corollary, which appears as Theorem 1.2 in \cite{APR2008}.

\begin{corollary} \label{thm1.2} (Adin, Postnikov, Roichman \cite{APR2008}) The map $\rho_{r,n}$ defines a Gelfand model for the wreath product $\ZZ_r \wr S_n$.
\end{corollary}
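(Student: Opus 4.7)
The plan is to deduce this corollary directly from the preceding theorem, whose hypothesis is satisfied by the abelian group $A = \ZZ_r$. The work is then entirely bookkeeping: one must verify that the representation $\rho_{r,n}$ defined via the sign function $\sign_{r,n}$ coincides with the general representation $\rho_{n,A}$ constructed in Proposition \ref{observation2}, with $A = \ZZ_r$ and $\sign_A = \sign_r$ as in Corollary \ref{abelian-model-cor}.

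First I would set $A = \ZZ_r$ and invoke Corollary \ref{abelian-model-cor} to know that $\rho_A$ with $\sign_A = \sign_r$ is a Gelfand model for $\ZZ_r$. Applying the preceding theorem, the associated map $\rho_{n,A} : G_n \to \GL(\cV_{G_n,\tau})$ is then a Gelfand model for $G_n = \ZZ_r \wr S_n$, where $\tau$ is the automorphism $\leftexp{\tau}(a,\pi) = (-a,\pi)$. Since $g^T = \leftexp{\tau}g^{-1}$ for $g \in \ZZ_r \wr S_n$, one has $\cV_{G_n,\tau} = \cV_{r,n}$ and $g\cdot \omega\cdot \leftexp{\tau}g^{-1} = g\omega g^T$, so the two maps act on the same space by the same twisted conjugation.

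The remaining task is to match the scalar coefficients. From Proposition \ref{observation2} and Corollary \ref{abelian-model-cor},
\[ \sign_{G_n}(g,\omega) = \sign_{S_n}(|g|,|\omega|) \prod_{i\in\Fix(|\omega|)} \sign_r(z_g(i),z_\omega(i)), \]
while by definition
\[ \sign_{r,n}(g,\omega) = (-1)^{|B(g,\omega)|}\cdot(-1)^{|\Inv(|g|)\cap\Pair(|\omega|)|} = (-1)^{|B(g,\omega)|}\cdot \sign_{S_n}(|g|,|\omega|). \]
Thus it suffices to show
\[ \prod_{i\in \Fix(|\omega|)} \sign_r(z_g(i),z_\omega(i)) = (-1)^{|B(g,\omega)|}. \]
This follows by direct inspection of the definitions: $\sign_r(a,x) = -1$ precisely when $r$ is even, $x = 2k+1$ for some $k \in [0,r/2-1]$, and $a + k \in [r/2,r-1]$, and these are exactly the conditions defining membership of $i \in B(g,\omega)$ (with $a = z_g(i)$ and $x = z_\omega(i)$). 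Since the factors in the product are independent across $i$, the identity of signs follows.

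Finally, to reconcile this with the generator formulas $\sign_{r,n}(s_i,\omega)$ given in the introduction, I would specialize: for $1 \le i < n$, the simple reflection $s_i$ has $|s_i| = (i\ i{+}1)$, so $\Inv(|s_i|) = \{(i,i{+}1)\}$ and $z_{s_i}(j) = 0$ for all $j$; the factors $\sign_r(0,z_\omega(j))$ are all $+1$, so $\sign_{r,n}(s_i,\omega) = -1$ exactly when $(i,i{+}1) \in \Pair(|\omega|)$, matching the stated formula. For $s_0 = \diag(\zeta_r,1,\dots,1)$, we have $|s_0| = 1$, $z_{s_0}(1) = 1$, $z_{s_0}(j) = 0$ for $j > 1$, so only the factor at $i = 1$ can contribute, and by inspection of $\sign_r(1, z_\omega(1))$ one obtains exactly $-1$ iff $r$ is even and $z_\omega(1) = r - 1$, i.e.\ $\omega_{11} = \zeta_r^{-1}$, in agreement with the introduction. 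The principal obstacle is purely bookkeeping, namely keeping the multiple layers of notation ($|g|, z_g, B(g,\omega),$ and the three sign functions) straight; no deep argument is needed beyond the preceding theorem.
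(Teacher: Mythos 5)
Your proposal is correct and follows essentially the same route as the paper: the paper likewise observes that $\rho_{r,n}$ is precisely $\rho_{n,A}$ with $A=\ZZ_r$ and $\sign_A=\sign_r$, so the corollary follows from the preceding theorem, with the identification $\prod_{i\in\Fix(|\omega|)}\sign_r(z_g(i),z_\omega(i))=(-1)^{|B(g,\omega)|}$ being the only point to check. Your additional verification of the generator formulas from the introduction is exactly the check the paper leaves to the reader.
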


By directly applying Theorem \ref{main-thm} to Lemma \ref{abelian-model-lem}, we can explicitly describe the generalized involution model for $\ZZ_r \wr S_n$ whose existence is implicit in our construction of $\rho_{r,n}$.   
In this situation, it is convenient to identify $\ZZ_r$ with the multiplicative subgroup of $\CC^\times$ given by all $r$th roots of unity; thus $\ZZ_2 = \{\pm 1\}$.  Let $\zeta_r = e^{2\pi i / r}$ be a primitive $r$th root of unity.  We view $\ZZ_r \wr S_n$ as the multiplicative group of $n\times n$ generalized permutation matrices whose nonzero entries are taken from $\ZZ_r$.  Given $g \in \ZZ_r \wr S_n$, let $|g|$ denote the permutation matrix given by replacing each entry of $g$ with its absolute value, and let $z_g(i)$ for $i=1,\dots,n$ denote the nonzero entry of $g$ in its $i$th column.  Under our previous conventions, the matrix $g$ can then be identified with the abstract pair $(x,\pi)$ where $\pi = |g| \in S_n$ and $x_i = z_g(i) \in \ZZ_r$ for $i=1,\dots,n$.  The matrix transpose $g^T$ then coincides with our previous definition of the transpose.

 For each $i \in [0,r-1]$, let $\psi_i : \ZZ_r \to \CC$ denote the irreducible character 
\[ \psi_i(x) = x^i,\qquad\text{for $x\in \ZZ_r$ viewed as an element of $\CC^\times$}\] so that $\Irr(\ZZ_r) = \{ \psi_i :i \in [0,r-1]\}$.  
Additionally let \[ \ba \sP &= \text{the set of all partitions of nonnegative integers,} \\
 \sP_r(n) &=\text{the set of $r$-tuples $\theta = (\theta_0,\theta_1,\dots,\theta_{r-1})$ of partitions with $|\theta_0|+|\theta_1|+\dots+|\theta_{r-1}| = n$}.\ea\] 
We refer to  elements of $\sP_r(n)$ as \emph{$r$-partite partitions of $n$}.  Define  $\psi_i \wr \lambda$ for $i \in [0,r-1]$ and  $\lambda \in \sP$  as the character of $\ZZ_r \wr S_{|\lambda|}$ given by 
 \[ \(\psi_i \wr \lambda\) (g) = \chi^\lambda(|g|)\( \frac{\det (g)}{\det(|g|)}\)^i,\qquad\text{for }g \in \ZZ_r \wr S_{|\lambda|}.\]
One checks via Lemma \ref{char-vals} that this coincides with our constructions in Section \ref{wreath-chars} since $\ZZ_r$ is abelian and since ${\det (g)}/{\det(|g|)}$ is the product of the nonzero entries of generalized permutation matrix $g$.
Now, following Theorem \ref{wreath-reps}, 
each irreducible character of $\ZZ_r \wr S_n$ is of the form 
\[ \chi_\theta \overset{\mathrm{def}}= \Ind_{S_\theta}^{\ZZ_r \wr S_n} \( \bigodot_{i =0}^{r-1} \psi_i \wr {\theta_i} \),\qquad\text{where }S_\theta = \prod_{i=0}^{r-1} \ZZ_r \wr S_{|\theta_i|},\] for a unique $\theta \in \sP_r(n)$.  We refer to the $r$-partite partition $\theta$ of $n$ as
  the \emph{shape} of the irreducible character $\chi_\theta$.  The shape of an irreducible $\ZZ_r \wr S_n$-representation is then the shape of its character.

We recall also the following additional definitions from Section \ref{wreath-chars}:
\[ \ba \omega_k &= 
(1\cyc 2)(3\cyc 4)\cdots(2k-1\cyc 2k) \in S_{2k}, \\
 V_k^\tau &= \bigl\{ g \in \ZZ_r \wr S_{2k} : |g| \in C_{S_n}(\omega_k),\ z_{2i-1}(g)\cdot z_{2i}(g)= 1 \text{ for all }i\bigr\}.\ea\]
   The next theorem says precisely how to construct $\rho_{r,n}$ by inducing linear representations.  Its proof is simply an exercise in translating the notations of Theorem \ref{main-thm} and Lemma \ref{abelian-model-lem}.

\begin{theorem}\label{summary}
The wreath product $G_n= \ZZ_r \wr S_n$ has a generalized involution model with respect to the automorphism $g \mapsto (g^{-1})^T$.

\begin{enumerate}

\item[(1)] If $r$ is odd, then the model is given by the $1+\lfloor n/2\rfloor$ linear characters $ \lambda_k : C_{G_n,\tau}(\varepsilon_k) \to \QQ$ with $0\leq 2k \leq n $, where
\begin{enumerate}
\item[] $ \varepsilon_k= \(\barr{ll} \omega_k & 0 \\ 0 & I_{n-2k} \earr\)$, $0\leq 2k \leq n$,  
 are orbit representatives in $\cI_{G_n,\tau}$,
\item[]
\item[] $C_{G_n,\tau}(\varepsilon_k) = \left\{ g= \(\barr{cc} \nu & 0 \\ 0 & \pi \earr\) : \nu \in V_k^\tau,\ \pi \in S_{r-2k}\right\}$,
\item[]
\item[] $ \lambda_{k}(g) = \det(\nu)$ 
 for $g \in C_{G_n,\tau}(\varepsilon_k)$.

\end{enumerate}
If $\theta \in \sP_r(n)$ then the irreducible character $\chi_\theta$  is a constituent of  $\Ind_{C_{G_n,\tau}(\varepsilon_k)}^{G_n}(\lambda_k)$ if and only if the partitions $\theta_0$, $\theta_1$, $\dots$, $\theta_{r-1}$ have  $n-2k$ odd columns in total. 

\item[(2)] If $r$ is even,  then the model  is given by the $\lceil \frac{n+1}{2} \rceil \cdot \lfloor \frac{n+3}{2} \rfloor$ linear characters $ \lambda_{k,\ell} : C_{G_n,\tau}(\varepsilon_{k,\ell}) \to \QQ$ with $0\leq 2k+\ell \leq n$, where
\begin{enumerate}
\item[] $ \varepsilon_{k,\ell} =  \(\barr{lll} \omega_k & 0 & 0 \\ 0 & I_{n-2k-\ell} & 0 \\ 
0 & 0 & \zeta_r I_\ell \earr\)$, $0\leq 2k + \ell \leq n$,  
 are orbit representatives in $\cI_{G_n,\tau}$,
\item[]
\item[] $C_{G_n,\tau}(\varepsilon_{k,\ell}) = \left\{g= \(\barr{ccc} \nu & 0 & 0 \\ 0 & x & 0 \\ 0 & 0 & y \earr\) :
\nu \in V_k^\tau,\ x \in \ZZ_2 \wr S_{n-2k-\ell},\ y \in \ZZ_2 \wr S_\ell
\right\}$,
\item[]
\item[] $ \lambda_{k,\ell}(g) =\det(\nu) \det(y)/ \det(|y|)$ 
for $g \in C_{G_n,\tau}(\varepsilon_{k,\ell}).$
\end{enumerate}
If $\theta \in \sP_r(n)$ then the irreducible character $\chi_\theta$ is a constituent of  $\Ind_{C_{G_n,\tau}(\varepsilon_k)}^{G_n}(\lambda_k)$ if and only if the partitions $\theta_0$, $\theta_2$, $\dots$, $\theta_{r-2}$ have  $n-2k-\ell$ odd columns  in total and the partitions $\theta_1$, $\theta_3$, $\dots$, $\theta_{r-1}$ have $\ell$ odd columns in total.

%

\end{enumerate}

\end{theorem}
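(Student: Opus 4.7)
The plan is to invoke Theorem \ref{main-thm} with $H = \ZZ_r$ and the generalized involution model for $H$ supplied by Lemma \ref{abelian-model-lem}, with the automorphism $\tau : a \mapsto a^{-1}$. Under the identification (\ref{wreath-defs}) between pairs $(x,\pi)$ and generalized permutation matrices, the extension of $\tau$ from (\ref{extend}) satisfies $\leftexp{\tau}g = (g^{-1})^T$, so the model produced is with respect to the automorphism named in the statement. By Theorem \ref{bg-thm} applied to Lemma \ref{abelian-model-lem} we have $\epsilon_\tau(\psi) = 1$ for every $\psi \in \Irr(\ZZ_r)$, so the hypotheses of Theorem \ref{main-thm} and Theorem \ref{thm1} are satisfied. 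What remains is to unpack the abstract data $\varepsilon_x$, $G_x^\tau$, $\phi_x^\tau$ into the explicit matrix form stated.

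The two cases are distinguished by whether $\cI_{\ZZ_r} = \{a \in \ZZ_r : a^2 = 1\}$ is trivial (odd $r$) or equals $\{\pm 1\} \cong \ZZ_2$ (even $r$). For odd $r$, Lemma \ref{abelian-model-lem} produces $m = 1$ with trivial character $\lambda_1$ on $H_1 = \{1\}$; writing $(x_0, x_1) = (k, n-2k)$, the recipe $\varepsilon_x = ((1,\dots,1,\varepsilon_1,\dots,\varepsilon_1), \omega_{x_0})$ of Theorem \ref{main-thm} yields exactly the $\varepsilon_k$ of (1), the centralizer $V_k^\tau \times (\{1\} \wr S_{n-2k})$ collapses to $V_k^\tau \times S_{n-2k}$, and $\phi_x^\tau = \sigma_k^\tau \odot \One$ agrees with $\det(\nu)$: the defining constraint $z_\nu(2i-1) z_\nu(2i) = 1$ on $\nu \in V_k^\tau$ forces the product of all nonzero entries to equal $1$, so $\det(\nu) = \sgn(|\nu|) = \sigma_k^\tau(\nu)$. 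For even $r$ there is one extra orbit representative $\varepsilon_2 = \zeta_r$ paired with the nontrivial character $\lambda_2$ of $\ZZ_2 \cong \cI_{\ZZ_r}$; writing $(x_0, x_1, x_2) = (k, n-2k-\ell, \ell)$ and inserting the additional diagonal block $\zeta_r I_\ell$ produces $\varepsilon_{k,\ell}$ and the centralizer $V_k^\tau \times (\ZZ_2 \wr S_{n-2k-\ell}) \times (\ZZ_2 \wr S_\ell)$. The associated character $\sigma_k^\tau(\nu) \cdot \One(x) \cdot (\lambda_2 \wr (\ell))(y)$ evaluates via Lemma \ref{char-vals} to $\det(\nu) \cdot 1 \cdot \prod_i z_y(i) = \det(\nu)\cdot \det(y)/\det(|y|)$, matching the formula for $\lambda_{k,\ell}$. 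A brief count of $\sU_2(n)$ confirms the claimed cardinality $\lceil (n+1)/2 \rceil \cdot \lfloor (n+3)/2 \rfloor$.

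The description of which $\chi_\theta$ occur in each induced character follows directly from Theorem \ref{thm1}. Using Lemma \ref{abelian-model-lem}, the irreducible constituents of $\Ind_{\cI_{\ZZ_r}}^{\ZZ_r}(\lambda_i)$ are all of $\Irr(\ZZ_r) = \{\psi_0, \ldots, \psi_{r-1}\}$ when $r$ is odd, while for even $r$ they are the $\psi_i$ with $i$ even for the trivial $\lambda_1 = \One$ and the $\psi_i$ with $i$ odd for the nontrivial $\lambda_2$. Substituting these lists into the definition of $\sR(x)$ in Theorem \ref{thm1} translates the abstract odd-column condition on $\theta(\psi)$ into exactly the condition stated in (1) and (2) on the total number of odd columns among the relevant $\theta_i$. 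I expect no serious obstacle beyond careful bookkeeping: the one potential subtlety is that Theorem \ref{main-thm} implicitly fixes an order on the factors of $S_\theta$, so one must match this to our ordering of the blocks of $\varepsilon_x$, but Theorem \ref{wreath-reps} guarantees the result is independent of that choice, so this is purely a matter of convention.
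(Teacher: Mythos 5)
Your proposal is correct and follows essentially the same route as the paper: both invoke Theorem \ref{main-thm} and Theorem \ref{thm1} with the generalized involution model for $\ZZ_r$ from Lemma \ref{abelian-model-lem} and then translate the abstract data $\varepsilon_x$, $G_x^\tau$, $\phi_x^\tau$ into the stated matrix form, using the fact that the product of the nonzero entries of $g$ is $\det(g)/\det(|g|)$. The paper only writes out the even case and declares the odd case analogous, so your explicit treatment of both cases (and the cardinality count) is if anything slightly more complete than the published argument.
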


\begin{proof}
Assume $r$ is even; the case when $r$ is odd is the same but less complicated.  Let $\cI_r =  \ZZ_2 = \{ \pm 1\}$ denote the subgroup of involutions in $\ZZ_r$, and define $\One,\chi : \cI_r \to \CC$ to be the trivial and nontrivial characters of $\cI_r$, respectively.  By Lemma \ref{abelian-model},
\[ \Ind_{\cI_r}^{\ZZ_r} (\One) = \psi_0+\psi_2+\dots +\psi_{r-2}
\qquad\text{and}\qquad
 \Ind_{\cI_r}^{\ZZ_r} (\chi) = \psi_1+\psi_3+\dots +\psi_{r-1}.\]  
As in Section \ref{wreath}, let $\sU_2(n)$ denote the set of triples of nonnegative integers $x=(x_0,x_1,x_2)$ with $2x_0 + x_1 + x_2 = n$.  For each $x \in \sU_2(n)$ define $\phi_x^\tau : G_x^\tau \to \CC$ by (\ref{grp-rep-defs}) and $\varepsilon_x \in G$ as in Theorem \ref{main-thm},  where we take $H_1=H_2 = \cI_r$,  define $\tau$ by $\leftexp{\tau}g= (g^{-1})^T$, set $\varepsilon_1= 0 \in \ZZ_r$ and $\varepsilon_2 = 1 \in \ZZ_r$.   By Theorems \ref{thm1} and \ref{main-thm}, the linear characters  $\{ \phi_x^\tau : x \in \sU_2(n)\}$ form a generalized involution model for $G_n$, and $\chi_\theta$ is a constituent of  $\Ind_{G_x^\tau}^{G_n}(\phi_x^\tau)$ if and only if the partitions $\theta_0$, $\theta_2$, $\dots$, $\theta_{r-2}$ have  $x_1$ odd columns  in total and the partitions $\theta_1$, $\theta_3$, $\dots$, $\theta_{r-1}$ have $x_2$ odd columns in total.  The theorem is immediate after noting that $\varepsilon_x = \varepsilon_{x_0,x_2}$ and $\phi_x^\tau = \lambda_{x_0,x_2}$ in the notation of the current theorem, which follows easily from the fact that the product of the nonzero entries of an invertible generalized permutation matrix $g$ is precisely $\det(g) / \det(|g|)$.  
\end{proof}

In the following corollary, let $2\ZZ_r = \left\langle \zeta_r^2\right\rangle$, where $\zeta_r = e^{2\pi i / r}$ generates 
$\ZZ_r$.  If $r$ is odd then of course $2\ZZ_r = \ZZ_r$, while if $r$ is even then $2\ZZ_r=\ZZ_{r/2} = \{ 1=\zeta_r^0, \zeta_r^2,\dots,\zeta_r^{r-2}\}$.

\begin{corollary}\label{summary-cor}
Fix $\omega \in \ZZ_r \wr S_n$ such that $\omega = \omega^T$.  Let 
\[ \ba k& = \text{the number of 2-cycles in $|\omega|$}, \\
\ell &= \text{the number of $i \in \Fix(|\omega|)$ with $z_\omega(i) \notin 2 \ZZ_r$.}\ea\]
The character of 
the subrepresentation of $\rho_{r,n}$ generated by vector $C_\omega \in \cV_{r,n}$   is then the sum
$\sum_\theta \chi_\theta$  over all $\theta \in \sP_r(n)$ such that 

\begin{enumerate}
\item[(i)] When $r$ is odd, the partitions $\theta_0$, $\theta_1$, $\dots$, $\theta_{r-1}$ have $n-2k$ odd columns in total.

\item[(ii)] When $r$ is even, the partitions $\theta_0$, $\theta_2$, $\dots$, $\theta_{r-2}$ have $n-2k-\ell$ odd columns in total  and the partitions $\theta_1$, $\theta_3$, $\dots$, $\theta_{r-1}$ have $\ell$ odd columns in total.

\end{enumerate}
\end{corollary}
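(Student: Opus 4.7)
The plan is to identify the subrepresentation of $\rho_{r,n}$ generated by $C_\omega$ with one of the induced representations that already appear in the generalized involution model of Theorem~\ref{summary}, and then read off the decomposition from part (1) or (2) of that theorem.

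I would begin by observing that $\omega^T=\omega$ is precisely the condition $\omega\in \cI_{G_n,\tau}$, where $\tau:g\mapsto (g^{-1})^T$ is the automorphism of $G_n=\ZZ_r \wr S_n$ extended from $\ZZ_r$ as in Theorem~\ref{summary}. The formula $\rho_{r,n}(g)C_\omega = \sign_{r,n}(g,\omega)\cdot C_{g\omega g^T}$ then shows that the subspace spanned by the twisted conjugacy orbit $\{C_{g\omega g^T}:g\in G_n\}$ is $\rho_{r,n}$-invariant and that the restricted representation is monomial. By the standard correspondence between transitive monomial representations and inductions of linear characters from stabilizer subgroups (cf.\ the remark following Lemma~\ref{observation}), this subrepresentation's character equals $\Ind_{C_{G_n,\tau}(\omega)}^{G_n}(\phi_\omega)$, where $\phi_\omega(g)=\sign_{G_n}(g,\omega)$ for $g\in C_{G_n,\tau}(\omega)$. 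Since conjugate subgroups with conjugate characters yield equivalent inductions, this character depends only on the twisted conjugacy orbit of $\omega$ in $\cI_{G_n,\tau}$.

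The heart of the proof will be the identification of this orbit with one of the orbits represented by the elements $\varepsilon_{k,\ell}$ (or $\varepsilon_k$ when $r$ is odd) from Theorem~\ref{summary}. Writing $\omega=(x,\sigma)$ with $\sigma=|\omega|$, the condition $\omega^T=\omega$ forces $\sigma$ to be an involution with $x_i=x_{\sigma(i)}$ on every $2$-cycle. A direct computation with the wreath product multiplication should give, for $g=(h,\pi)$,
\[ g\omega g^T = \(y,\pi\sigma\pi^{-1}\),\qquad y_{\pi(j)} = h_{\sigma(j)}\cdot x_j\cdot h_j,\quad j=1,\dots,n.\]
From this one sees that $k=\#\{\text{$2$-cycles of $\sigma$}\}$ is an orbit invariant. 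For a fixed point $j$ of $\sigma$, the new diagonal value $h_j^2\cdot x_j$ ranges over the coset $x_j\cdot 2\ZZ_r$ as $h_j$ varies, so each fixed point carries a well-defined orbit invariant in $\ZZ_r/2\ZZ_r$. On a $2$-cycle $(j,\sigma(j))$, the common value $x_j$ can be modified to any element of $\ZZ_r$ (in particular normalized to $1$) by appropriate choice of $h_j$ and $h_{\sigma(j)}$, and permuting coordinates via $\pi$ lets us rearrange fixed points arbitrarily. So the orbit of $\omega$ is determined by $k$ together with the multiset of $2\ZZ_r$-cosets attached to the fixed points of $\sigma$. When $r$ is odd, $2\ZZ_r=\ZZ_r$ and only $k$ matters; when $r$ is even, there are just the two cosets $2\ZZ_r$ and $\zeta_r\cdot 2\ZZ_r$, and the multiset reduces to the count $\ell$ from the corollary. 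Comparing with the explicit form of $\varepsilon_{k,\ell}$ in Theorem~\ref{summary} confirms that $\omega$ lies in the orbit of $\varepsilon_{k,\ell}$ (resp.\ $\varepsilon_k$).

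Putting these pieces together, the character of the subrepresentation generated by $C_\omega$ is exactly the induced character associated to $(k,\ell)$ (or just $k$) in Theorem~\ref{summary}, and parts (i) and (ii) of the corollary are the corresponding decompositions from parts (1) and (2) of that theorem. The main obstacle will be verifying that $k$ and $\ell$ are indeed complete orbit invariants under twisted conjugation; the underlying calculation is elementary, but the wreath product conventions (the action of $S_n$ on $H^n$, the form of the transpose, the identification of the abstract orbit representatives $\varepsilon_x$ of Theorem~\ref{main-thm} with the concrete matrices $\varepsilon_{k,\ell}$ of Theorem~\ref{summary}) have to be handled with care.
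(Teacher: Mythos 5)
Your proposal is correct and follows essentially the same route as the paper: the paper's proof consists precisely of invoking Theorem~\ref{summary} after "checking that the orbit of $\omega$ under the twisted conjugacy action $g:\omega\mapsto g\omega g^T$ contains $\varepsilon_k$ when $r$ is odd and $\varepsilon_{k,\ell}$ when $r$ is even," which is exactly the orbit identification you carry out. Your explicit computation of $g\omega g^T$ and the verification that $k$ and $\ell$ are complete orbit invariants simply fill in the details the paper leaves to the reader, and they are consistent with the paper's conventions.
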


\begin{proof}
This follows from the preceding theorem after checking that the orbit of $\omega$ under the twisted conjugacy action $g : \omega \mapsto g \omega g^T$ contains $\varepsilon_k$ when $r$ is odd and $\varepsilon_{k,\ell}$ when $r$ is even.  
\end{proof}

\def\bflambda{\mbox{\boldmath$\lambda$}}
\def\bfP{\mbox{\boldmath$P$}}
\def\bfQ{\mbox{\boldmath$Q$}}

This corollary allows us to prove Conjecture 7.1 in \cite{APR2008}.  
Recall the definition given above of an $r$-partite partition of $n$.  One obtains an \emph{$r$-partite standard Young tableau} of shape $\theta \in \sP_r(n)$ by inserting the integers $1,2,\dots,n$ bijectively into the cells of the Ferrers diagrams of the partitions $\theta_0,\theta_1,\dots,\theta_{r-1}$ so that entries increase along each row and column of each partition.
%
  
  The natural subrepresentations considered in the preceding corollary have the following connection with  the generalized Robinson-Schensted correspondence for wreath products due to Stanton and White \cite{Stanton}.
  Recall, for example from \cite{Stanley}, that the usual  
   Robinson-Schensted-Knuth (RSK) correspondence is a bijective map \[\( \barr{cccc} a_1 & a_2 & \cdots & a_n \\ b_1 & b_2 & \cdots & b_n \earr\) \xrightarrow{\mathrm{RSK}} (P,Q)\] from two-line arrays of lexicographically ordered positive integers to pairs of semistandard Young tableaux $(P,Q)$ with the same shape.  Vewing $\sigma \in S_n$ as the two-line array with $a_i = i$ and $b_i = \sigma(i)$, 
this map restricts to a bijection from permutations to pairs of standard Young tableaux with the same shape.  
     Sch\"utzenberger proves in  \cite{Sch}  that the RSK correspondence associates to each involution $\omega \in \cI_{S_n}$ with $f$ fixed points a pair of standard Young tableaux $(P,Q)$ with $P=Q$ whose common shape has $f$ odd columns. 

  To define Stanton and White's  colored RSK correspondence for wreath products, fix an element $g \in \ZZ_r \wr S_n$ and for each $j \in [0,r-1]$, let $(P_j,Q_j)$ be the pair of tableaux obtained by RSK correspondence applied to the array 
   \be\label{array} \(\barr{cccc} i_1 & i_2 & \cdots & i_\ell \\ 
   \sigma(i_1) & \sigma(i_2) & \cdots & \sigma(i_\ell) \earr\)\ee where $\{ i_1 <i_2<\dots< i_\ell\}$   is the set of $i \in [1,n]$ with $z_g(i) = \zeta_r^j$.  
  The colored RSK correspondence is then the bijection from elements of $\ZZ_r \wr S_n$ to pairs of $r$-partite standard Young tableaux of the same shape defined by
  \[ g \longrightarrow (\bfP,\bfQ)= \biggl( (P_0, P_1,\dots, P_{r-1}), (Q_0, Q_1,\dots,Q_{r-1})\biggr).\]
To begin, we have the following easy corollary of Sch\"utzenberg's result.

\begin{lemma}\label{schu}
Fix $\omega \in \ZZ_r \wr S_n$ such that $\omega = \omega^T$ and suppose $\omega \mapsto (\bfP,\bfQ)$ under the colored RSK correspondence.  Then $\bfP = \bfQ$ and for each $j \in [0,r-1]$, the number of odd columns in the shape of $P_j$ is equal to the cardinality of $\left \{ i \in\Fix(|\omega|) : z_\omega(i) = \zeta_r^j \right\} $.
\end{lemma}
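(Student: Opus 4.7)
The plan is to reduce the claim to the classical Schützenberger theorem (stated in the excerpt above) by exploiting the fact that the colored RSK correspondence is, by its very definition, the application of classical RSK to each ``color class'' separately. So the crux is showing that each color class, considered as a sub-array, encodes an involution whose number of fixed points equals the quantity appearing in the statement.

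First I would unpack what the hypothesis $\omega = \omega^T$ means in coordinates. Writing $\omega = (a,\sigma)$ with $a = (a_1,\dots,a_n) \in \ZZ_r^n$ and $\sigma = |\omega| \in S_n$, the identity $(a,\sigma)^T = (\sigma(a),\sigma^{-1})$ combined with $\omega = \omega^T$ forces $\sigma^2 = 1$ and $a_i = a_{\sigma(i)}$ for every $i$. In particular, for each $j \in [0,r-1]$ the set $I_j = \{\, i : z_\omega(i) = \zeta_r^j\,\}$ is stable under $\sigma$, and the restriction $\sigma_j := \sigma|_{I_j}$ is an involution on $I_j$. Its fixed-point set is precisely $\Fix(\sigma) \cap I_j = \{\, i \in \Fix(|\omega|) : z_\omega(i) = \zeta_r^j \,\}$, which is the cardinality that must appear in the conclusion.

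Next I would invoke the defining recipe of the colored RSK correspondence. By construction, $(P_j, Q_j)$ is obtained by feeding the two-line array
\[
\begin{pmatrix} i_1 & i_2 & \cdots & i_\ell \\ \sigma(i_1) & \sigma(i_2) & \cdots & \sigma(i_\ell) \end{pmatrix},\qquad I_j = \{i_1 < \cdots < i_\ell\},
\]
into classical RSK. Since $\sigma_j$ is an involution on $I_j$, this array (after relabeling $I_j$ by $\{1,\dots,\ell\}$ in order) is precisely the two-line array of an involution in $S_\ell$. Schützenberger's theorem then gives $P_j = Q_j$, and tells us that the common shape of $P_j$ has exactly as many odd columns as $\sigma_j$ has fixed points. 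Combining this with the identification of $\Fix(\sigma_j)$ from the previous paragraph yields both assertions simultaneously: $\bfP = (P_0,\dots,P_{r-1}) = (Q_0,\dots,Q_{r-1}) = \bfQ$, and the number of odd columns in the shape of $P_j$ equals $|\{i \in \Fix(|\omega|) : z_\omega(i) = \zeta_r^j\}|$.

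There is no serious obstacle here; the only point requiring any care is verifying that $I_j$ is genuinely $\sigma$-stable, which is where the symmetry hypothesis $\omega = \omega^T$ (as opposed to the weaker condition that $|\omega|$ be an involution) is essential. Once that is in place, the argument is a bookkeeping reduction to a single classical theorem applied once per color.
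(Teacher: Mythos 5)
Your proposal is correct and follows the same route as the paper: both arguments observe that the symmetry $\omega=\omega^T$ forces $z_\omega$ to be constant on the cycles of the involution $|\omega|$, so that each color-class array is itself the two-line array of an involution, and then apply Sch\"utzenberger's theorem once per color. Your version merely spells out the coordinate computation ($\sigma^2=1$ and $a_i=a_{\sigma(i)}$) that the paper leaves implicit.
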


\begin{proof}
Since $\omega$ is a symmetric element, we have $z_\omega(i) = z_\omega(j)$ whenever $i$ and $j$ are in the same cycle of the involution $|\omega| \in S_n$.  Therefore each array (\ref{array}) corresponds to an involution in the group of permutations of the set $\{ i_1,\dots,i_\ell\}$, and it follows by Sch\"utzenberger's result that $\bfP=\bfQ$ and the number of odd columns in the shape of $P_j$ is as claimed.
\end{proof}
 
 We can now prove the theorem promised in the introduction.

\begin{theorem}  \label{conjecture}
Let $\cX$ be a set of symmetric elements in $\ZZ_r \wr S_n$.  If the elements of $\cX$ span a $\rho_{r,n}$-invariant subspace of $\cV_{r,n}$, then the subrepresentation of $\rho_{r,n}$ on this space is equivalent to the multiplicity-free sum of all irreducible $\ZZ_r\wr S_n$-representations whose shapes are obtained from the elements of $\cX$  by the colored RSK correspondence.


\end{theorem}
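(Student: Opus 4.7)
The strategy is to reduce to the case where $\cX$ is a single orbit of the twisted conjugation action $g : \omega \mapsto g\omega g^T$, handle that case by combining Corollary \ref{summary-cor} with Lemma \ref{schu} via a dimension count, and then sum over orbits. Since $\rho_{r,n}(g)C_\omega$ is a scalar multiple of $C_{g\omega g^T}$, invariance of $\operatorname{span}(\cX)$ forces $\cX$ to decompose as a disjoint union of orbits $\mathcal{O}_1 \sqcup \cdots \sqcup \mathcal{O}_s$ on the set of symmetric elements of $\ZZ_r \wr S_n$. The subrepresentation decomposes accordingly as a direct sum over these orbits, so it suffices to prove the theorem when $\cX$ is a single orbit $\mathcal{O}$.

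Fix such an orbit, pick $\omega \in \mathcal{O}$, and let $k$ (and, in the even case, $\ell$) be the invariants from Corollary \ref{summary-cor}. That corollary tells us the character of the subrepresentation on $\operatorname{span}\{C_{\omega'} : \omega' \in \mathcal{O}\}$ is the multiplicity-free sum $\sum_\theta \chi_\theta$ over those $\theta \in \sP_r(n)$ satisfying explicit column-count conditions determined by $(k,\ell)$. On the other hand, for each $\omega' \in \mathcal{O}$, Lemma \ref{schu} says its colored RSK shape satisfies exactly those same column-count conditions, since the counts $|\{i \in \Fix(|\omega'|) : z_{\omega'}(i) = \zeta_r^j\}|$ determine (in aggregate, and in the even case separately over even and odd $j$) the orbit invariants $k$ and $\ell$. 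Thus every shape arising from $\mathcal{O}$ by colored RSK lies in the column-count set.

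To upgrade this inclusion to an equality I count dimensions. The size of $\mathcal{O}$ equals the dimension of the subrepresentation on $\operatorname{span}\{C_{\omega'} : \omega' \in \mathcal{O}\}$, which by Corollary \ref{summary-cor} is $\sum_\theta \deg(\chi_\theta)$ over the column-count set. Meanwhile, since $\ZZ_r$ is abelian the degree formula in Theorem \ref{wreath-reps} simplifies to $\deg(\chi_\theta) = n!\prod_j \deg(\chi^{\theta_j})/|\theta_j|!$, which counts $r$-partite standard Young tableaux of shape $\theta$; and the colored RSK correspondence, restricted to symmetric elements, bijects them with pairs $(\bfP,\bfP)$ and hence with single $r$-partite SYT. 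Therefore the number of symmetric $\omega' \in \ZZ_r \wr S_n$ with prescribed RSK shape $\theta$ equals $\deg(\chi_\theta)$. The two sums match term by term, so every $\theta$ in the column-count set is actually realized as the RSK shape of some elements of $\mathcal{O}$. Taking disjoint unions over orbits, and observing that $(k,\ell)$ is recoverable from the column counts of any $\theta$ in its column-count set (so distinct orbits contribute disjoint constituents), yields the claimed multiplicity-free decomposition. The main delicate point is this dimension count tying orbit sizes to the total $\deg(\chi_\theta)$ over the column-count set; the remainder is assembly of previously established results.
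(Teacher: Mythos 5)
Your proposal is correct and follows essentially the same route as the paper: reduce to a single twisted-conjugacy orbit, combine Corollary \ref{summary-cor} with Lemma \ref{schu} to see that colored RSK sends the orbit into the set of $r$-partite standard Young tableaux whose shapes index the constituents, and then conclude by the count $\deg(\chi_\theta) = \#\{\text{$r$-partite SYT of shape }\theta\}$ that this map is a bijection. Your extra remark that $(k,\ell)$ is recoverable from the column counts, so distinct orbits contribute disjoint constituents, is a slightly more explicit justification of the reduction step than the paper gives, but the argument is the same.
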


\begin{remark}
Caselli and Fulci prove a similar result concerning the decomposition of a different Gelfand model for $\ZZ_r \wr S_n$ in the recent preprint \cite{CF}.  Comparing the preceding theorem with \cite[Theorem 1.2]{CF} shows that there exist abstract isomorphisms between various natural subrepresentations of these two Gelfand models.
\end{remark}

The symmetric elements $\omega \in \ZZ_r \wr S_n$ whose underlying permutations $|\omega| \in S_n$ have a fixed number of 2-cycles form a union of twisted conjugacy classes with respect to the inverse transpose automorphsim, and so they span an invariant subspace of $\cV_{r,n}$. 
Hence, this result implies  \cite[Conjecture 7.1]{APR2008}.

\begin{proof}
It suffices to prove the theorem when $\cX = \left\{ g \omega g^T : g \in \ZZ_r \wr S_n\right\}$ is the orbit of some $\omega \in \ZZ_r \wr S_n$ with $\omega^T = \omega$.  
In this case, it follows by comparing Corollary \ref{summary-cor} and Lemma \ref{schu} that 
the colored RSK correspondence defines an injective map from $\cX$ to the set of $r$-partite standard Young tableaux whose shapes index irreducible constituents of the subrepresentation generated by $\cX$.  Since the number of such tableaux is equal to the cardinality of $\cX$ due to the well-known fact that the number of $r$-partite standard Young tableaux of shape $\theta$ is equal to $\chi_\theta(1)$, this map is in fact a bijection, which proves the theorem.
\end{proof}

We conclude by deriving two additional results which will be useful in the subsequent work \cite{?}.  Assume $r$ is even.  We then have two $\rho_{r,n}$-invariant subspaces of $\cV_{r,n}$ given by
\[\ba  \cV_{r,n}^+ &= \QQ\spanning\left\{ C_\omega : \omega \in \ZZ_r \wr S_n,\ \omega^T = \omega,\ \det(\omega) / \det(|\omega|) \in 2\ZZ_r \right\}, \\
 \cV_{r,n}^- &= \QQ\spanning\left\{ C_\omega : \omega \in \ZZ_r \wr S_n,\ \omega^T = \omega,\ \det(\omega) / \det(|\omega|) \notin 2\ZZ_r \right\}. \ea\] 
 Let $\chi_{r,n}^+$ and $\chi_{r,n}^-$ denote the characters of $\ZZ_r \wr S_n$ corresponding to the subrepresentations of $\rho_{r,n}$ on $\cV_{r,n}^+$ and $\cV_{r,n}^-$ respectively.

\begin{corollary}\label{plusminus}
Let $r,n$ be positive integers with $r$ even.  Given $\theta \in \sP_r(n)$, define $\Omega(\theta) $ as  the sum of the numbers of odd columns in the partitions $\theta_1,\theta_3,\dots, \theta_{r-1}$.  Then 
\[ \chi_{r,n}^+ = \sum_{\substack{\theta \in \sP_r(n), \\ \Omega(\theta)\text{ is even}}}\chi_\theta
\qquad\text{and}\qquad
\chi_{r,n}^- = \sum_{\substack{\theta \in \sP_r(n), \\ \Omega(\theta)\text{ is odd}}}\chi_\theta.
\]

\end{corollary}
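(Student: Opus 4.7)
The plan is to reduce the corollary to Theorem \ref{conjecture} by checking that the condition $\det(\omega)/\det(|\omega|) \in 2\ZZ_r$ translates under the colored RSK correspondence into the parity condition on $\Omega(\theta)$.

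First I would observe that the sets of symmetric elements $\omega \in \ZZ_r \wr S_n$ with $\det(\omega)/\det(|\omega|) \in 2\ZZ_r$ and with $\det(\omega)/\det(|\omega|) \notin 2\ZZ_r$ are each unions of twisted conjugacy classes: indeed, if $\omega' = g\omega g^T$, then $\det(\omega') = \det(g)^2 \det(\omega)$ while $\det(|\omega'|) = \det(|g|)^2 \det(|\omega|)$, so $\det(\omega')/\det(|\omega'|)$ and $\det(\omega)/\det(|\omega|)$ differ by a square in $\ZZ_r$ and hence lie in the same coset of $2\ZZ_r$. This confirms that $\cV_{r,n}^\pm$ are $\rho_{r,n}$-invariant, and by Theorem \ref{conjecture} each decomposes as the multiplicity-free sum of $\chi_\theta$ as $\theta$ ranges over the shapes produced by colored RSK from the spanning symmetric elements.

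Next I would compute $\det(\omega)/\det(|\omega|)$ explicitly as the product $\prod_{i=1}^n z_\omega(i)$ of the nonzero entries of $\omega$. For each 2-cycle $(i\ j)$ in $|\omega|$, symmetry of $\omega$ forces $z_\omega(i) = z_\omega(j)$, so these two entries contribute $z_\omega(i)^2 \in 2\ZZ_r$. Consequently
\[ \det(\omega)/\det(|\omega|) \ \in\ 2\ZZ_r \quad\Longleftrightarrow\quad \prod_{i \in \Fix(|\omega|)} z_\omega(i)\ \in\ 2\ZZ_r, \]
which in additive exponent notation becomes: the number of $i \in \Fix(|\omega|)$ with $z_\omega(i) = \zeta_r^j$ for some \emph{odd} $j$ must be even.

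Finally I would invoke Lemma \ref{schu}: if $\omega \mapsto (\bfP,\bfP)$ has shape $\theta = (\theta_0,\dots,\theta_{r-1})$, then the number of odd columns of $\theta_j$ equals $|\{i \in \Fix(|\omega|) : z_\omega(i) = \zeta_r^j\}|$. Summing over odd $j$ yields
\[ \Omega(\theta) \ =\ \bigl|\{i \in \Fix(|\omega|) : z_\omega(i) \notin 2\ZZ_r\}\bigr|, \]
so $\det(\omega)/\det(|\omega|) \in 2\ZZ_r$ if and only if $\Omega(\theta)$ is even. Combining this equivalence with the application of Theorem \ref{conjecture} to each of $\cV_{r,n}^+$ and $\cV_{r,n}^-$ gives the two claimed character identities. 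No step here is a serious obstacle; the only point requiring any care is verifying that the 2-cycle contributions to $\det(\omega)/\det(|\omega|)$ are forced into $2\ZZ_r$ by the symmetry condition, which is what makes the invariant $\det(\omega)/\det(|\omega|) \pmod{2\ZZ_r}$ depend only on the values of $z_\omega$ at the fixed points.
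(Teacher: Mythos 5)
Your proposal is correct and follows essentially the same route as the paper: the paper's proof is simply the observation that $\det(\omega)/\det(|\omega|)\in 2\ZZ_r$ if and only if the number of fixed points $i$ of $|\omega|$ with $z_\omega(i)=\zeta_r^j$ for odd $j$ is even, combined with Lemma \ref{schu} and Theorem \ref{conjecture}. Your additional verifications (that the two subspaces are twisted-conjugacy invariant, and that the $2$-cycle entries contribute squares) are exactly the details the paper leaves implicit.
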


\begin{proof}
Since $\det(\omega) / \det(|\omega|) \in 2\ZZ_r$ for a symmetric element $\omega \in \ZZ_r \wr S_n$ if and only if the union of the disjoint sets $ \left\{ i \in \Fix(|\omega|) : z_\omega(i) = \zeta_r^j\right\}$ over all odd $j \in [0,r-1]$ has even cardinality, this is immediate from Lemma \ref{schu} and Theorem \ref{conjecture}.
%
\end{proof}

Suppose $p$ is a   positive integer dividing $r$.  Let $\gamma : \ZZ_r \wr S_n \to \CC$ denote the linear character defined by 
\[\gamma(g) = \(\psi_{r/p}\wr (n)\)(g) =   \( \frac{\det(g) }{ \det(|g|)}\)^{r/p},\qquad\text{for }g\in \ZZ_r \wr S_n.\] Here $(n)$ denotes the trivial partition of $n$.  
A straightforward calculation shows that for all $\theta \in \sP_r(n)$ we have 
\be\label{otimes} \gamma \otimes \chi_\theta = \chi_{\theta'},\qquad\text{where}\qquad \theta'_i = \theta_{i-r/p}\text{ for }i\in [0,r-1]\ee where with slight abuse of notation we define $\theta_{i-r} = \theta_i$ for $i \in [0,r-1]$. 
This observation leads to the following corollary of Lemma \ref{plusminus}.

\begin{proposition} \label{plusminus-cor} Let $r,p,n$ be positive integers with $r$ even and $p$ dividing $r$. Then
\[ \gamma\otimes \chi_{r,n}^+= \left\{\ba & \chi_{r,n}^-, &&\text{if $n$ and $r/p$ are odd}, \\
&\chi_{r,n}^+,&&\text{otherwise},\ea\right.
\qquad
\gamma\otimes \chi_{r,n}^- =\left\{\ba & \chi_{r,n}^+, &&\text{if $n$ and $r/p$ are odd}, \\
&\chi_{r,n}^-,&&\text{otherwise}.\ea\right.
\]

\end{proposition}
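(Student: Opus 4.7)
My plan is to combine Corollary \ref{plusminus} with the cyclic-shift formula (\ref{otimes}) and reduce the problem to a parity computation.

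By Corollary \ref{plusminus}, the characters $\chi_{r,n}^\pm$ are indexed by $r$-partite partitions $\theta \in \sP_r(n)$ according to the parity of $\Omega(\theta) = \sum_{j\text{ odd}} c(\theta_j)$, where $c(\lambda)$ denotes the number of odd columns of $\lambda$. By (\ref{otimes}), tensoring with $\gamma$ sends $\chi_\theta$ to $\chi_{\theta'}$ where $\theta'_i = \theta_{i-r/p}$ (indices mod $r$). Since the assignment $\theta \mapsto \theta'$ is a bijection on $\sP_r(n)$, it suffices to compare $\Omega(\theta)$ and $\Omega(\theta')$ under this reindexing.

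First I would handle the case when $r/p$ is even. Then the cyclic shift by $r/p$ preserves the parity of the index, so $\Omega(\theta) = \sum_{j\text{ odd}} c(\theta_{j-r/p}) = \sum_{i\text{ odd}} c(\theta'_i) = \Omega(\theta')$, and therefore $\gamma \otimes \chi_{r,n}^\pm = \chi_{r,n}^\pm$ regardless of the parity of $n$. When $r/p$ is odd the shift swaps odd and even indices, so instead $\Omega(\theta) = \sum_{i\text{ even}} c(\theta'_i)$, giving $\Omega(\theta) + \Omega(\theta') = \sum_{i\in[0,r-1]} c(\theta'_i)$.

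The main ingredient is then the elementary parity identity: for any partition $\lambda$ the number of odd columns $c(\lambda)$ equals the number of odd parts of the transpose $\lambda^t$, and the number of odd parts of a partition has the same parity as its size. Hence $c(\theta'_i) \equiv |\theta'_i| \pmod 2$, so $\sum_i c(\theta'_i) \equiv \sum_i |\theta'_i| = n \pmod 2$. Thus when $r/p$ is odd we have $\Omega(\theta) + \Omega(\theta') \equiv n \pmod 2$: the parity of $\Omega$ is preserved when $n$ is even and is flipped when $n$ is odd. Collecting the three cases yields exactly the case distinction in the proposition, so $\gamma\otimes\chi_{r,n}^+=\chi_{r,n}^-$ and $\gamma\otimes\chi_{r,n}^-=\chi_{r,n}^+$ when both $n$ and $r/p$ are odd, while $\gamma$ fixes $\chi_{r,n}^\pm$ in every other case. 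The only nontrivial step is the parity identity relating $c(\lambda)$ to $|\lambda|$, and this is a one-line observation; no real obstacle is anticipated.
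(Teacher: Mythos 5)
Your proposal is correct and follows essentially the same route as the paper: both reduce the claim via Corollary \ref{plusminus} and the shift formula (\ref{otimes}) to comparing the parity of $\Omega(\theta)$ with that of the shifted tuple, using the fact that the total number of odd columns over all components is congruent to $n$ modulo $2$. Your version merely makes explicit the parity identity $c(\lambda)\equiv|\lambda|\pmod 2$ that the paper leaves implicit.
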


\begin{proof}
Recall the definition of $\Omega$ from Corollary \ref{plusminus} and let $\Omega'(\theta)$ for $\theta \in \sP_r(n)$ be the sum of the numbers of odd columns in the partitions $\theta_0,\theta_2,\dots,\theta_{r-2}$.  Suppose $r/p$ is odd; then (\ref{otimes}) implies that the map $\chi\mapsto \gamma\otimes \chi$ exchanges the two sets \be\label{set}
\{ \chi_\theta : \theta \in \sP_r(n),\ \Omega(\theta)\text{ is odd} \}\qquad\text{and}\qquad \{ \chi_\theta : \theta \in \sP_r(n),\ \Omega'(\theta)\text{ is odd} \}.\ee  If $n$ is odd, then $\theta \in \sP_r(n)$ has $\Omega'(\theta)$ odd if and only if $\Omega(\theta)$ is even, and it follows immediately from Corollary \ref{plusminus} that $\gamma\otimes \chi_{r,n}^\pm =  \chi_{r,n}^\mp$.  
If $n$ is even, then $\theta \in \sP_r(n)$ has $\Omega'(\theta)$ odd if and only if $\Omega(\theta)$ is odd, so the two sets in (\ref{set}) are the same, and necessarily $\gamma \otimes \chi_{r,n}^+ = \chi_{r,n}^+$.  
Alternatively, if $r/p$ is even, then by (\ref{otimes}) the map $\chi\mapsto \gamma\otimes \chi$ defines a permutation of the set of irreducible constituents of $\chi_{r,n}^+$ so $\gamma \otimes \chi_{r,n}^+ = \chi_{r,n}^+$.  Similar arguments show that $\gamma \otimes \chi_{r,n}^- = \chi_{r,n}^-$ if $n$ or $r/p$ is even.
\end{proof}

We continue this discussion and apply these results in the complementary work \cite{?}, where we show how and when the Gelfand model $\rho_{r,n}$ can be extended to the complex reflection group $G(r,p,n)$, and classify the finite complex reflection groups which have generalized involution models.

\end{document}